\documentclass [12pt,twoside,a4paper]{article}
\usepackage{amsfonts}
\usepackage{amsthm}
\usepackage{amsmath}
\usepackage{amstext}
\usepackage{amssymb}
\usepackage{mathrsfs}
\usepackage{amscd}
\usepackage{xypic}
\usepackage{cite}
\usepackage[numbers,sort&compress]{natbib}
\usepackage{epsf}              
\usepackage{fancybox}          
\usepackage{color}             
\usepackage{fancyhdr}
\usepackage[hang,footnotesize]{caption2}  
\usepackage{enumerate} 
\usepackage{float}
\usepackage{makecell}
\usepackage{subfigure}
\usepackage{graphicx}
\usepackage{epstopdf}
\usepackage{epsfig}
\usepackage[numbers]{natbib}
\usepackage{ragged2e}
\usepackage{graphicx,epsfig,epstopdf,subfigure}
\numberwithin{equation}{section}
\newfont{\aaa}{cmb10 at 19pt}
\newfont{\bbb}{cmb10 at 14pt}
\newtheorem{Case}{Case}[section]

\newtheorem{Subcase}{Subcase}[Case]
\newtheorem{theorem}{Theorem}[section]
\newtheorem{corollary}[theorem]{Corollary}
\newtheorem{conjecture}[theorem]{Conjecture}
\newtheorem{lemma}[theorem]{Lemma}
\newtheorem{claim}[theorem]{Claim}
\newtheorem{definition}[theorem]{Definition}
\newtheorem{proposition}[theorem]{Proposition}
\newtheorem{problem}[theorem]{Problem}
\newtheorem{ques}[theorem]{Question}
\newtheorem{fact}[theorem]{Fact}

\makeatletter

\newcommand{\Rmnum}[1]{\expandafter\@slowromancap\romannumeral #1@}
\makeatother

\pagestyle{plain}

\newcommand{\beq}{\begin{equation}}
\newcommand{\eeq}{\end{equation}}
\newcommand{\bey}{\begin{eqnarray}}
\newcommand{\eey}{\end{eqnarray}}
\newcommand{\beyy}{\begin{eqnarray*}}
\newcommand{\eeyy}{\end{eqnarray*}}

\oddsidemargin  0pt     
\evensidemargin 0pt     
\marginparwidth 40pt    
\marginparsep 10pt      

\topmargin -15pt           
\headsep 10pt            
\textheight 9.5in      
\textwidth 6.3in         

\bfseries\normalfont

\setlength{\parindent}{2em}

\begin{document}

\title{A Dirac-type theorem for arbitrary Hamiltonian $H$-linked digraphs \thanks{Zhilan Wang and Jin Yan are supported by NNSF of China (No.12071260), and Yangyang Cheng is supported by a PhD studentship of ERC Advanced Grant 883810.}}

\author{Yangyang Cheng\textsuperscript{1}, Zhilan Wang\textsuperscript{2}\thanks{Corresponding author. E-mail address: zhilanwang@mail.sdu.edu.cn.}, Jin Yan\textsuperscript{2}, \unskip\\[2mm]
\textsuperscript{1} Mathematical Institute, University of Oxford, Oxford OX1 2JD, UK
\\
\textsuperscript{2} School of Mathematics, Shandong University, Jinan 250100, China}
\date{}
\maketitle
\begin{abstract}
\noindent Given any digraph $D$ on $n$ vertices, let $\mathcal{P}(D)$ be the family of all directed paths in $D$, and let $H$ be a digraph with the arc set $A(H)=\{a_1, \ldots, a_k\}$.
The digraph $D$ is called arbitrary Hamiltonian $H$-linked if for any injective map $f: V(H)\rightarrow
V(D)$ and any integer set $\mathcal{N}=\{n_1, \ldots, n_k\}$ satisfying that $n_i\geq4$ for each $i\in\{1, \ldots, k\}$, there is a map $g: A(H)\rightarrow \mathcal{P}(D)$ such that for every arc $a_i=uv$, $g(a_i)$ is a directed path from $f(u)$ to $f(v)$ of length $n_i$, and different arcs are mapped into internally vertex-disjoint directed paths in $D$, and $\bigcup_{i\in[k]}V(g(a_i))=V(D)$. Here, the length of a directed path is defined as the number of its arcs.

In this paper, we prove that for any digraph $H$ with $k$ arcs and $\delta(H)\geq1$, there exists a constant $C_0=C_0(k)$ such that if $D$ is a digraph of order $n\geq C_0$ and minimum in- and out-degree at least $n/2+k$, then it is arbitrary Hamiltonian $H$-linked. The lower bound on the minimum in- and out-degree is best possible. We further prove a more general form that allows $k$ to be linear in $n$, while imposing some restrictions on the lengths of the subdivided arcs.
As corollaries, we solved a conjecture of Wang \cite{Wang} for sufficiently large graphs, and partly answered a problem raised by Pavez-Sign\'{e} \cite{Pavez}.

\end{abstract}
\noindent{\bf Keywords:} Subdivision; $H$-linked; minimum semi-degree; absorption method; stability

\noindent{\bf Mathematics Subject Classifications:}\quad 05C20, 05C07, 05C38
\section{Introduction}
Given a (di)graph $G$, let $\mathcal{P}(G)$ denote the family of (directed) paths in $G$. Let $H$ be a fixed (di)graph (possibly
containing loops). An \emph{$H$-subdivision} in $G$ is a pair of maps $f: V(H)\rightarrow
V(G)$ and $g: E(H)\rightarrow \mathcal{P}(G)$ satisfying the following conditions: $(a)$ $f$ is injective, i.e., $f(u)\neq f(v)$ for any distinct vertices $u, v\in V(H)$, and $(b)$ for every edge $uv\in E(H)$, $g(uv)$ is a (directed) path connecting $f(u)$ and $f(v)$, and paths corresponding to different edges are internally vertex-disjoint in $G$. For any (directed) path $P$, its \emph{length} is defined as the number of (directed) edges in $P$. A (di)graph $G$ is \emph{$H$-linked} if every injective mapping $f: V(H)\rightarrow V(G)$ can be extended to an $H$-subdivision in $G$. Furthermore, $G$ is called \emph{Hamiltonian $H$-linked} if every injective mapping $f: V(H)\rightarrow V(G)$ can be extended to a spanning $H$-subdivision in $G$, meaning that the union of the vertex sets of the paths in the subdivision covers all vertices of $G$, i.e., $\bigcup_{uv\in E(H)}V(g(uv))=V(G)$. This framework generalizes the concepts of subdivisions and linkage in (di)graphs, providing a foundation for studying structural and extremal properties of graphs and digraphs.

\smallskip

Researchers have been particularly intrigued by the question of what degree conditions ensure that a graph $G$ is $H$-linked for any fixed graph $H$. In $2005$, Kostochka and Yu \cite{Kostochka} proved that for a simple graph $H$ with $k$ edges and minimum degree $\delta(H)\geq2$, if $G$ is a graph of order $n\geq5k+6$, and $\delta(G)\geq\frac{n+k-2}{2}$, then $G$ is $H$-linked. Moreover, under the same degree condition, $G$ is also Hamiltonian $H$-linked. Since then, their theorem has inspired numerous generalizations and extensions, with significant contributions documented in \cite{Ferrara, Gould, Kostochka1, Kostochka2}. These works have further explored and refined the degree conditions and structural properties required for $H$-linkage and Hamiltonian
$H$-linkage in graphs. For other degree conditions like Ore-type conditions, recently, Coll, Magnant, and Nowbandegani \cite{Coll} proved that there exists a positive integer $n_0$ such that for any integer set $\mathcal{N}=\{n_1, \ldots, n_k\}$ with $n_i\geq n_0$ for all $i\in\{1, \ldots, k\}$, and for any graph $H$ with $k$ edges and $\delta(H)\geq1$, every graph $G$ of order $n$ with $\sigma_2(G)\geq n+2k-1$ is Hamiltonian $(\mathcal{N}H)$-linked (i.e., $G$ contains a spanning $H$-subdivision, in which the paths have lengths $n_1, \ldots, n_k$, respectively), where $\sigma_2(G)=\min \{d(u)+d(v) \mid uv \notin E(G)\}$.

\smallskip

Let $D=(V, A)$ be a digraph on $n$ vertices, and let $H$ be a digraph with the arc set $A(H)=\{a_1, \ldots, a_k\}$. Given any integer set $\mathcal{N}=\{n_1, \ldots, n_k\}$, an $H$-subdivision $(f, g)$ is \emph{Hamiltonian $(\mathcal{N}H)$-subdivision} if for each $i\in\{1, \ldots, k\}$, the length of the path $g(a_i)$ is $n_i$, and $\bigcup_{i\in[k]}V(g(a_i))=V(D)$. In particular, a digraph $D$ is called \emph{arbitrary Hamiltonian $H$-linked}
if for any injective map $f: V(H)\rightarrow
V(D)$ and any integer set $\mathcal{N}=\{n_1, \ldots, n_k\}$ satisfying that $n_i\geq4$ for each $i\in\{1, \ldots, k\}$, every map $f: V(H)\rightarrow V(D)$ can be extended to a Hamiltonian $(\mathcal{N}H)$-subdivision.
Furthermore, $D$ is called $(\alpha, \beta)$-\emph{arbitrary Hamiltonian $H$-linked}
if there exists constants $\alpha, \beta\in(0, 1)$ such that for any integer set $\mathcal{N}=\{n_1, \ldots, n_k\}$ satisfying that $n_i\geq4$ for each $i\in\{1, \ldots, k\}$ and $\sum_{n_i<\alpha n}n_i\leq\beta n$, every map $f: V(H)\rightarrow V(D)$ can be extended to a Hamiltonian $(\mathcal{N}H)$-subdivision.

\smallskip

We define the \emph{minimum semi-degree} of $D$ as $\delta^0(D)=\min\{\delta^+(D), \delta^-(D)\}$, where $\delta^+(D)$ and $\delta^-(D)$ denote the minimum out-degree and in-degree of $D$, respectively. The \emph{minimum degree} is defined as $\delta(D)=\min_{x\in V}\{d(x): d(x)=d^+(x)+d^-(x)\}$.

\smallskip

In this paper, we investigate the minimum semi-degree condition that ensures $D$ to be arbitrary Hamiltonian $H$-linked. In fact, we prove the following result:
\begin{theorem}\label{song3}
Let $H$ be a digraph with $k$ arcs and $\delta(H)\geq1$. There exists a constant $C_0$ depending on $H$ such that if $D$ is a digraph of order $n\geq C_0$ and $\delta^0(D)\geq n/2+k$, then $D$ is arbitrary Hamiltonian $H$-linked.
\end{theorem}
We actually proved a more general form under the condition that the short lengths in the subdivision of $H$ are not too many:
\begin{theorem}\label{song2}
There exist constants $\alpha_0, \beta_0 \in (0,1)$ such that for any $\alpha\in (0,\alpha_0]$ and $\beta\in (0,\beta_0]$, there exists a constant $C_0>0$ such that if $H$ is a digraph with $k$ arcs and $\delta(H)\geq1$ and $D$ is a digraph of order $n\geq C_0k$ with $\delta^0(D)\geq n/2+k$, then $D$ is $(\alpha, \beta)$-arbitrary Hamiltonian $H$-linked.
\end{theorem}

In fact, let $\alpha_0$, $\beta_0$ be the constants defined in Theorem \ref{song2}. Choose $\alpha=\min \{\alpha, \beta_0/k\}$ and $\beta=\beta_0$, where $k$ is supposed to be a constant, and let $C_0$ be the constant defined in Theorem \ref{song2} that depends on $\alpha, \beta$. Under the condition of Theorem \ref{song3}, the sum $\sum_{n_i<\alpha n}n_i\leq \alpha kn\leq \beta_0 n$. Thus, by Theorem \ref{song2}, when $n\geq C_0k$, $D$ is arbitrary Hamiltonian $H$-linked.

In the proof of Theorem \ref{song2}, we use the standard absorption method, which was first introduced by R\"{o}dl, Ruci\'{n}ski and Szemer\'{e}di \cite{Rodl}, as well as the stability method. We need to adapt these ideas to the linkage of digraphs instead of tight cycles in hypergraphs. Generally speaking, the proof of Theorem \ref{song2} is divided into two parts. In the first part of the proof, we assume that the digraph $D$ is not close to one extremal case. Under this assumption, we apply the absorption method in a standard way to prove the existence of an arbitrary $(\alpha, \beta)$-Hamiltonian $H$-subdivision. In the second part of the proof, we consider the case where $D$ is close to the defined extremal case. Here, we analyze the structure of $D$ case by case using various structural methods. This part of the proof is more intricate and is detailed in Section 3.2.

The following remarks show that the degree condition and length condition in Theorem \ref{song2} are both best possible.

\smallskip

\noindent \emph{Remark $1$.} It is not meaningful to seek a condition based solely on the minimum out-degree (or similarly, minimum in-degree) of a digraph $D$ to ensure that $D$ is $H$-linked. To illustrate this, consider the following construction: let $D$ be the digraph obtained from a complete digraph $D_0$ of order $n-1$ by adding a new vertex $x$ that sends an arc to every vertex in $D_0$. Here, a complete digraph is defined as a digraph containing all possible arcs. Clearly, $\delta^+(D)\geq n-2$, but $D$ is not even $\overrightarrow{K_2}$-linked. This demonstrates that high minimum out-degree (or in-degree) alone is insufficient to guarantee $H$-linkage.

\medskip

$2$. The minimum semi-degree condition in Theorem \ref{song2} is best possible. We present a counterexample inspired by the works of K\"{u}hn and Osthus \cite{20081}, and K\"{u}hn, Osthus and Young \cite{20082}. Let $D$ be a digraph consisting of two complete digraphs $Q_1$ and $Q_2$, each of order $n/2+k$, which share exactly $2k$ common vertices. By calculating the semi-degrees of vertices in $V(Q_1\setminus Q_2)\cup V(Q_2\setminus Q_1)$, we observe that $\delta^0(D)=n/2+k-1$. Let $V(Q_1\cap Q_2)=\{x_1, \ldots, x_k, y_1, \ldots, y_k\}$ and let $H$ be the digraph defined by the arcs $x_1y_1\cup\cdots\cup x_ky_k$.  In this case, $D$ is not arbitrary Hamiltonian $H$-linked because the induced subdigraph $D[V(D-H)\cup\{x_1, y_1\}]$ does not contain a path of length more than $n/2-k+1$ from $x_1$ to $y_1$. This construction confirms the tightness of the minimum semi-degree condition in Theorem \ref{song2}.

\medskip

$3$. The condition $n_i\geq4$ for all $1\leq i\leq k$ is necessary in the following sense. Let $k$ and $n$ be integers such that $k\geq5$ and $l=\frac{n+3k-1}{2(3k-2)}$ is an integer greater than or equal to $2$. Consider the digraph $D=\frac{n-3k+1}{2}K_1+l\overleftrightarrow{K}_{3k-2}$, where the `$+$' denotes the addition of all possible arcs between two subdigraphs (or vertex sets), $\frac{n-3k+1}{2}K_1$ represents $\frac{n-3k+1}{2}$ isolated vertices, and $l\overleftrightarrow{K}_{3k-2}$ represents
$l$ disjoint copies of the complete digraph $\overleftrightarrow{K}_{3k-2}$. It can be verified that  $D$ has order $n$ and $\delta^0(D)=\frac{n+3k-5}{2}\geq\frac{n+2k}{2}$. However, if $H$ consists of $k$ disjoint arcs within one of the component of $l\overleftrightarrow{K}_{3k-2}$, then $D$ does not contain the Hamiltonian $H$-subdivision described in Theorem \ref{song2} for the case when $n_1=\cdots=n_{k-1}=3$ and $n_k=n-4k+3$.

\smallskip

Theorem \ref{song2} refines and extends several earlier findings with the minimum degrees differ by only $1$. K\"{u}hn and Osthus \cite{20081} proved that a minimum semi-degree of $n/2+k-1$ is sufficient to ensure a sufficiently large digraph $D$ is $k\overrightarrow{K_2}$-linked, where $\overrightarrow{K_2}$ represents a single arc and $k\overrightarrow{K_2}$ denotes the union of $k$ vertex-disjoint arcs. Later, K\"{u}hn, Osthus and Young \cite{20082} showed that under the same minimum semi-degree condition, $D$ is also Hamiltonian $k\overrightarrow{K_2}$-linked. Additionally, in \cite{Jacobson}, Ferrara, Jacobson and Pfender established  the minimum semi-degree condition guaranteeing that a digraph is $H$-linked for any multidigraph $H$. In comparison, our Theorem \ref{song2} requires a minimum semi-degree of $n/2+k$, which is $1$ more than the $n/2+k-1$ condition in the aforementioned works. As highlighted in the remarks above, this difference is necessary and cannot be improved to $n/2+k-1$ due to the existence of extremal cases.

\smallskip

Wang \cite{Wang} proposed the following conjecture about disjoint cycles passing through prescribed edges under degree condition:
\begin{conjecture}
If $G$ is a graph of order $n$ with minimum degree at least $n/2+k$, then for any $k$ disjoint edges $e_1, \ldots, e_k$ in $G$ and for any integer partition $n=n_1+\cdots+n_k$ with $n_i\geq5$ for each $i\in\{1, \ldots, k\}$, $G$ has $k$ vertex-disjoint cycles $C_1, \ldots, C_k$ of orders $n_1, \ldots, n_k$, respectively, such that $C_i$ passes through $e_i$ for all $1\leq i\leq k$.
\end{conjecture}
By replacing edges of $G$ with two arcs in both directions, it is straightforward to verify the following corollary holds directly from Theorem \ref{song2}, and thus gives an affirmative answer to Wang's conjecture when $n$ is sufficiently large compared to $k$.
\begin{corollary}
For every integer $k\geq 2$, there exists an integer $C_0=C_0(k)$ such that if $G$ is a graph of order $n\geq C_0$ with minimum degree at least $n/2+k$, then for any $k$ independent edges $e_1, \ldots, e_k$ in $G$ and for any integer partition $n=n_1+\cdots+n_k$ satisfying that $n_i\geq5$ for each $i\in\{1, \ldots, k\}$, $G$ has $k$ vertex-disjoint cycles $C_1, \ldots, C_k$ of orders $n_1, \ldots, n_k$, respectively, such that $C_i$ passes through $e_i$ for all $1\leq i\leq k$.
\end{corollary}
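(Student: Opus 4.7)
The plan is to deduce Corollary 2 from Theorem \ref{song2} via the standard orientation trick. Replace each edge $uv$ of $G$ by a pair of oppositely directed arcs $uv$ and $vu$ to form a digraph $D$ on $V(G)$; then $|V(D)| = n$ and $\delta^0(D) = \delta(G) \geq n/2 + k$. Label $e_i = x_iy_i$ for $i \in [k]$, and note that the $2k$ endpoints $x_1, y_1, \dots, x_k, y_k$ are distinct since the $e_i$ are independent. The key observation driving the reduction is that a cycle of order $n_i$ in $G$ through $e_i$ is precisely the union of $e_i$ with an $x_iy_i$-path of length $n_i - 1$ in $G$, and since the hypothesis $n_i \geq 5$ forces any such path to have length at least $4 > 1$ it automatically avoids the single edge $e_i$, so the two requirements are equivalent.

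Let $H$ be the digraph on vertex set $\{u_1, v_1, \dots, u_k, v_k\}$ with arc set $\{u_iv_i : i \in [k]\}$, which is a disjoint union of $k$ arcs with $\delta(H) = 1$. Define an injection $f \colon V(H) \to V(D)$ by $f(u_i) = x_i$, $f(v_i) = y_i$, and set $n_i' := n_i - 1$ and $\mathcal{N}' := \{n_1', \dots, n_k'\}$. Then $n_i' \geq 4$ and $\sum_i n_i' = n - k$, and the sum-of-small-parts condition on $\mathcal{N}$ translates to the analogous condition on $\mathcal{N}'$ after enlarging $C_0$ to absorb the harmless additive shift of $k \leq n/C_0$. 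Applying Theorem \ref{song2} to $D$ with these data produces a Hamiltonian $(\mathcal{N}'H)$-subdivision: internally vertex-disjoint directed paths $P_1, \dots, P_k$ in $D$ with $P_i$ a directed $x_iy_i$-path of length $n_i - 1$, whose union covers $V(D)$. Because the $2k$ endpoints are distinct, the $P_i$ are in fact pairwise vertex-disjoint and partition $V(D)$ into sets of sizes $n_1, \dots, n_k$.

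Finally, read each $P_i$ as a path in $G$ (valid since a directed path visits distinct vertices, so its underlying walk in $G$ uses each edge at most once), and set $C_i := P_i + e_i$. Because $P_i$ has length $n_i - 1 \geq 4$, it cannot equal the single edge $e_i$, so $C_i$ is a genuine cycle of order $n_i$ through $e_i$; the $C_i$ are pairwise vertex-disjoint and together span $V(G)$, which is exactly the conclusion of Corollary 2. Since Theorem \ref{song2} is used as a black box, no substantive obstacle remains; the only point requiring attention is the bookkeeping for the sum condition under the shift $n_i \mapsto n_i - 1$, which is routine once $C_0$ is chosen slightly larger than the constant furnished by Theorem \ref{song2}.
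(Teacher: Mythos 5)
Your proof is correct and follows the same route the paper intends (and explicitly sketches in the remark before Corollary~1.2): replace each edge of $G$ by a pair of oppositely directed arcs, take $H = k\overrightarrow{K_2}$ with branch vertices mapped to the endpoints of the $e_i$, shift the prescribed lengths to $n_i' = n_i - 1 \ge 4$ (which makes $\sum_i n_i' = n - k$ exactly account for the $k$ disjoint paths on $\sum_i n_i = n$ vertices), and invoke Theorem~\ref{song2}. The paper treats this as an immediate consequence and gives no further detail, so your write-up, including the observation that $n_i - 1 \geq 4 \geq 2$ rules out $e_i$ appearing on $P_i$, is a faithful and complete expansion of the paper's intended argument.
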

Pavez-Sign\'{e} \cite{Pavez} proposed the following question regarding $H$-subdivision:
\begin{ques}
For every $\varepsilon>0$, does there exist a positive constant $C_0>0$ such that for all $C\geq C_0$ and any integer $k$ the following holds? Let $H$ be a graph with $k$ edges and $\delta(H)\geq1$. If $G$ is a graph on $n\geq Ck$ vertices and minimum degree at least $(1/2+\varepsilon)n$, then it contains a spanning $H$-subdivision, where all the paths in the subdivision have nearly the same length.
\end{ques}

By replacing edges of $G$ with two arcs in both directions, the following corollary also holds directly from Theorem \ref{song2}, and thus gives a partly answer to Pavez-Sign\'{e}'s problem when $k$ is fixed.
\begin{corollary}
Let $H$ be any graph with $k$ edges and $\delta(H)\geq1$. There exists a positive integer $C_0=C_0(k)$ such that if $G$ is a graph of order $n\geq C_0$ and $\delta(G)\geq n/2+k$, then $G$ is arbitrary Hamiltonian $H$-linked.
\end{corollary}
A digraph $D$ is \emph{$k$-ordered} if $|V(D)|\geq k$ and for every sequence $s_1, \ldots, s_k$ of distinct vertices in $D$, there exists a cycle that encounters $s_1, \ldots, s_k$ in this order. Furthermore, $D$ is said to be \emph{arbitrary $k$-ordered Hamiltonian} if this cycle is Hamiltonian, and for any integer set $\{n_1, \ldots, n_s\}$ where $n_i\geq 4$ for $1\leq i\leq s$, the length of the path on this cycle connecting vertex $s_i$ to $s_{i+1}$ is $n_i$ for each $i\in\{1, \ldots, s\}$. The definitions of $k$-arc-ordered digraphs and arbitrary $k$-arc-ordered Hamiltonian digraphs are analogous.

K\"{u}hn and Osthus \cite{20081} proved that there exists some constant $c$ such that for any $k\geq2$, every digraph $D$ of order $n\geq ck^3$ is $k$-ordered if $\delta^0(D)\geq(n+k)/2-1$; and is $k$-arc-ordered if $\delta^0(D)\geq n/2+k-1$. Also, K\"{u}hn, Osthus and Young \cite{20082} showed that for every $k\geq3$ there is an integer $n_0=n_0(k)$ such that every digraph $D$ on $n\geq n_0$ vertices with $\delta^0(D)\geq\lceil(n+k)/2\rceil-1$ is $k$-ordered Hamiltonian; and under the same assumptions, if $\delta^0(D)\geq\lceil n/2\rceil+k-1$ then $D$ is $k$-arc-ordered Hamiltonian. We give the following corollary of Theorem \ref{song2}.
\begin{corollary}\label{y1}
For any integer $k\geq2$, there exists an integer $C_0=C_0(k)$ such that every digraph $D$ of order $n\geq C_0$ with $\delta^0(D)\geq n/2+k$ is arbitrary $k$-arc-ordered (and $k$-ordered) Hamiltonian.
\end{corollary}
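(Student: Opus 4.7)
The plan is to read both parts of the corollary directly off Theorem \ref{song2} by choosing, in each situation, an auxiliary digraph $H$ tailored so that the desired Hamiltonian cycle is obtained as a spanning $H$-subdivision of $D$.

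For the arbitrary $k$-ordered Hamiltonicity part, given distinct vertices $s_1,\ldots,s_k\in V(D)$ and an admissible length sequence $\{n_1,\ldots,n_k\}$ with $\sum_{i=1}^{k}n_i=n$, I take $H$ to be the directed cycle $w_1\to w_2\to\cdots\to w_k\to w_1$. Then $|A(H)|=k$ and $\delta(H)=2\geq 1$, so the hypotheses of Theorem \ref{song2} are met. Setting $f(w_i)=s_i$, the theorem supplies internally vertex-disjoint paths $P_i$ of length $n_i$ from $s_i$ to $s_{i+1}$ (indices mod $k$) whose union is $V(D)$; their concatenation is the required arbitrary $k$-ordered Hamiltonian cycle.

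For the arbitrary $k$-arc-ordered Hamiltonicity, given disjoint arcs $e_i=u_iv_i$ and admissible lengths $\{n_1,\ldots,n_k\}$ (with $n_i\geq 5$ being the prescribed length of the on-cycle path from $v_i$ to $u_{i+1}$, so $\sum n_i=n-k$), I define $H$ on the $2k$ distinct vertices $\{u_i,v_i\}_{i=1}^{k}$ with arc set $\{v_iu_{i+1}:1\leq i\leq k\}$ (indices mod $k$). Then $|A(H)|=k$ and $\delta(H)=1$ since every $v_i$ has out-degree $1$ and every $u_i$ has in-degree $1$. Applying Theorem \ref{song2} with $f(u_i)=u_i$, $f(v_i)=v_i$ and lengths $n_i\geq 5> 4$ returns internally disjoint paths $Q_i$ of length $n_i$ from $v_i$ to $u_{i+1}$ whose union covers $V(D)$. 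The crucial observation is that the $f$-images $u_j,v_j$ cannot appear as internal vertices of any $Q_i$, so no arc $e_j$ can be hidden inside some $Q_i$; hence the concatenation $e_1 Q_1 e_2 Q_2\cdots e_k Q_k$ is a Hamiltonian cycle of $D$ traversing $e_1,\ldots,e_k$ in the prescribed order with the correct segment lengths.

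The only bookkeeping required is to transfer the tail condition from the corollary's hypothesis into the hypothesis of Theorem \ref{song2}. This is immediate, because the tail condition has exactly the same form in the two statements, and the minor arithmetic shift between ``segment length including $e_i$'' and ``segment length excluding $e_i$'' (a change of at most $k\leq n/C_0$) can be absorbed by shrinking $\alpha$ and $\beta$ by a constant factor, at the cost of enlarging $C_0$. No new extremal or absorption argument is needed: the corollary is a direct repackaging of Theorem \ref{song2}, and the only ``obstacle'' is spotting the correct choice of auxiliary digraph $H$ in each of the two cases.
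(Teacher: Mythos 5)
Your proof is correct and takes the same (implicit) route the paper intends: the corollary is stated without proof precisely because it follows from Theorem~\ref{song2} by choosing $H$ to be a directed $k$-cycle (for the $k$-ordered case) or the disjoint union of the $k$ arcs $v_iu_{i+1}$ on $2k$ vertices (for the $k$-arc-ordered case). One small remark: the observation that branch vertices cannot reappear internally is needed for the $k$-ordered case as well (otherwise some $s_j$ could be an internal vertex of $P_i$ with $i\notin\{j-1,j\}$, spoiling the cycle), and it is guaranteed because the paper's construction of the spanning $H$-subdivision works in $D-f(V(H))$; you stated it only for the arc-ordered case, but it applies and is needed in both.
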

Note that by Remark 2 above, the lower bound $n/2+k$ of Corollary \ref{y1} cannot be improved to $n/2+k-1$ due to the existence of counterexamples.

If the digraph $H$ consists of $k$ disjoint loops, then Theorem \ref{song2} leads to the following conclusion, which provides a special case of El-Zahar's conjecture \cite{El} in the directed version when the number of cycles is bounded, in a stronger form.
\begin{corollary}
For every positive integer $k$, there exists an integer $C_0=C_0(k)$ such that, if $D$ is a digraph of order $n\geq C_0$ and $\delta^0(D)\geq n/2+k$, and $S=\{x_1, \ldots, x_k\}$ be any vertex set of $D$. Then for any integer partition $n=n_1+\cdots+n_k$ satisfying that $n_i\geq4$ for each $i\in\{1, \ldots, k\}$, $D$ contains $k$ vertex-disjoint cycles $C_1, \ldots, C_k$ of length $n_1, \ldots, n_k$, respectively, such that $V(C_i)\cap S=\{x_i\}$ for all $i\in\{1, \ldots, k\}$.
\end{corollary}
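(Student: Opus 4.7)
The plan is to derive this corollary directly from Theorem \ref{song2} by specializing $H$ to be the digraph with vertex set $\{v_1,\ldots,v_k\}$ whose $k$ arcs are the loops $\ell_i=v_iv_i$, one at each vertex (the introduction explicitly permits $H$ to contain loops). Each loop at $v_i$ contributes $1$ to both $d^+(v_i)$ and $d^-(v_i)$, so $\delta(H)\geq 1$, and Theorem \ref{song2} then supplies a constant $C_0=C_0(H)$ such that any digraph $D$ of order $n\geq C_0k$ with $\delta^0(D)\geq n/2+k$ is arbitrary Hamiltonian $H$-linked.

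I would then apply this to the given $D$ with the injective map $f:V(H)\to V(D)$ defined by $f(v_i)=x_i$ (injective because the $x_i$ are distinct elements of $S$), together with the integer set $\mathcal{N}=\{n_1,\ldots,n_k\}$ coming from the prescribed partition $n=n_1+\cdots+n_k$. By hypothesis $n_i\geq 4$ for all $i$ and the $\alpha,\beta$ small-parts condition is satisfied, so the input to Theorem \ref{song2} is legitimate. The theorem then produces a map $g:A(H)\to\mathcal{P}(D)$ in which each $g(\ell_i)$ is a directed path from $x_i$ to $x_i$ of length $n_i$ --- equivalently, a directed cycle $C_i$ of length $n_i$ passing through $x_i$ --- such that distinct $g(\ell_i)$ are internally vertex-disjoint and $\bigcup_{i=1}^k V(C_i)=V(D)$.

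The remaining step is to verify that $V(C_i)\cap S=\{x_i\}$ for each $i$. Since $|V(C_i)|=n_i$ and $\sum_{i=1}^k n_i=n=|V(D)|$, the covering equality $\bigcup_i V(C_i)=V(D)$ forces the summands to be pairwise disjoint, so $V(C_i)\cap V(C_j)=\emptyset$ for $i\neq j$; combined with $x_i\in V(C_i)$ by construction, this gives the conclusion. There is no genuine obstacle here beyond observing that the $k$-loop version of ``arbitrary Hamiltonian $H$-linked'' coincides with the system of $k$ vertex-disjoint cycles through the prescribed vertices $x_i$ with prescribed lengths $n_i$ that the corollary demands; once $H$ is chosen as above this identification is immediate from the definitions, so the corollary reduces cleanly to Theorem \ref{song2}.
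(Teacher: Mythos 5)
Your proposal is correct and matches the paper's intent exactly: the corollary is stated immediately after the remark that it follows by taking $H$ to be $k$ disjoint loops in Theorem~\ref{song2}, which is precisely your choice of $H$. Your additional observation that $\sum_i n_i = n$ together with $\bigcup_i V(C_i)=V(D)$ forces the cycles to be pairwise vertex-disjoint (hence $V(C_i)\cap S=\{x_i\}$) is a clean and valid way to close the argument.
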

\noindent \textbf{Organization.} The rest of the paper is organized as follows. In Section $2$, we begin by presenting relevant definitions and notations. We then  provide a sketch of the proof of Theorem \ref{song2}. Moving on to Section $3$, we present the detailed proof of Theorem \ref{song2}. In Subsection $3.1$,
we firstly introduce some key lemmas, namely the Connecting Lemma, Absorbing Lemma and Path-Covering Lemma, which are utilized to prove Theorem \ref{song2} when the digraph $D$ does not satisfy the extremal condition discussed in Section $2$. Secondly, we provide the proof of Theorem \ref{song2} for the case when $D$ does not satisfy the extremal condition. In Subsection $3.2$, we first identify one extremal case that $D$ belongs to when it satisfies the extremal condition, and then we will prove that Theorem \ref{song2} holds in this case. Finally, Section $4$ contains some concluding remarks to wrap up the paper.
\section{Preparations for Theorem \ref{song2}}
\subsection{Definitions and notations}
For notations not defined in this paper, we refer the readers to \cite{Bang-Jensen3}.
Let $D=(V, A)$ be a digraph. The cardinality of a vertex set $X\subseteq V$ is denoted by $|X|$,  and we call $X$ an \emph{$i$-set} if $|X|=i$. The \emph{out-neighbourhood} (resp., \emph{in-neighbourhood}) of a vertex $v$ in $D$ is defined as $N^{+}(v)=\{u: vu\in A\}$ (resp., $N^{-}(v)=\{w: wv\in A\}$). The \emph{out-degree} (resp., \emph{in-degree}) of $v$ in $D$, denoted by $d^+(v)$ (resp. $d^-(v)$), is the cardinality of $N^{+}(v)$ (resp., $N^{-}(v)$), that is, $d^{+}(v)=|N^{+}(v)|$ (resp., $d^{-}(v)=|N^{-}(v)|$). The \emph{minimum out-degree} $\delta^+(D)=\min\{d^{+}(v): v\in V\}$ and the \emph{minimum in-degree} $\delta^-(D)=\min\{d^{-}(v): v\in V\}$.

\smallskip

For any $X\subseteq V$ and $\sigma\in\{-, +\}$, we define $N^\sigma(u, X)=N^\sigma(u)\cap X$ and $d^\sigma_X(u)=|N^\sigma(u, X)|$ for any vertex $u$ in $V$, and $\delta^0_X(u)=\min\{d^+_X(u), d^-_X(u)\}$. The subdigraph of $D$ induced by $X$ is denoted as $D[X]$. Let $D-X=D[V\setminus X]$ and $\overline{X}=V\setminus X$. For another vertex set $Y$ that is not necessarily disjoint from $X$, we use $e^+(X, Y)$ to represent the number of arcs from $X$ to $Y$. In particular, $e(X)$ represents the number of arcs in $D[X]$. In this paper, we also abbreviate the bipartite digraph $D[X, Y]$ as $(X, Y)$.

\smallskip

A \emph{$k$-path} refers to a path with $k$ vertices. We often represent the $k$-path $P$ as $v_1\cdots v_k$, where $V(P)=\{v_1, \ldots, v_k\}$, and call $v_1$ and $v_k$ the \emph{initial} and the \emph{terminal} of $P$, respectively. Furthermore, for two disjoint vertex sets $X$ and $Y$ in $V$, if the initial and the terminal of $P$ belongs to $X$ and $Y$, respectively, then we say that $P$ is an $(X, Y)$-path. In particular, we write an $X$-path instead of $(X, X)$-path if $X=Y$. Additionally, we say an $(X, Y)$-path $P$ is \emph{minimal} if there is no $(X, Y)$-path $P^\prime$ with $|V(P^\prime)|<|V(P)|$. All paths in digraphs refer to directed paths, and we use the term \emph{disjoint} instead of vertex-disjoint for simplicity.

\smallskip

For a vertex pair $(u, v)$ (possibly, $u=v$), we say that a $4$-path $z_1z_2z_3z_4$ \emph{absorbs} $(u, v)$ if $z_2u, vz_3\in A$; and a $4$-path is called a \emph{absorber} for $(u, v)$ if it absorbs $(u, v)$. This terminology reflects the fact that the $4$-path $z_1z_2z_3z_4$ can be extended by absorbing a path with the initial $u$ and the terminal $v$, resulting in a longer path with the same set of end-vertices. For two paths $P=a\cdots b$ and $Q=b\cdots d$ with $V(P)\cap V(Q)=\{b\}$, we denote the concatenated path as $P\circ Q$. This definition can be extended naturally to more than two paths.

\smallskip

For a positive integer $t$, we simply write $\{1, \ldots , t\}$ as $[t]$. Throughout this paper, the notation $0<\beta\ll\alpha$ is used to make clear that $\beta$ can be selected to be sufficiently small corresponding to $\alpha$ so that all calculations required in our proof are valid. For the real numbers $a$ and $b$, we use $a\pm b$ to represent an unspecified real number in the interval $[a-b, a+b]$.

\smallskip

To summarize this subsection, we provide the following extremal condition for a constant $\varepsilon^\prime$, where $0<\varepsilon^\prime\ll1$. In particular, we say the digraph $D$ is \emph{stable} if $D$ does not satisfy the following extremal condition (\textbf{EC}).

\smallskip

\noindent \textbf{Extremal Condition (EC) with parameter $\varepsilon^\prime$:} Let $D$ be a digraph of order $n$. There exist two (not necessarily disjoint) vertex sets $U_1$ and $U_2$ in $D$ with $|U_i|\geq(1/2-\varepsilon^\prime)n$ for every $i\in[2]$ such that $e^+(U_1, U_2)\leq(\varepsilon^\prime n)^2$.
\subsection{Overview of the proof of Theorem \ref{song2}}
Let $H$ be a digraph with $k$ arcs and $\delta(H)\geq1$, and let $D$ be a digraph of order $n\geq C_0k$ with $\delta^0(D)\geq n/2+k$, as described in Theorem \ref{song2}. The proof of Theorem \ref{song2} utilizes the stability method, which is divided into two main cases:

The \textbf{extremal case}, when the digraph $D$ is not stable;

The \textbf{non-extremal case}, when $D$ is stable.

\smallskip

\textbf{Non-extremal case}

For the non-extremal case, we divide the proof into the following three steps:

\textbf{Step 1. Prove the Connecting lemma.} The Connecting Lemma (referred to as Lemma \ref{s1} in Subsection $3.1$ below) asserts that any two distinct vertices in $D$ can be connected by a short path.

\textbf{Step 2. Find an $H$-linked subdigraph (Absorbing Lemma).} By utilizing the Connecting Lemma and the probabilistic method, we will construct an \emph{absorbing} subdigraph $H^\prime$ that is $H$-linked and possesses the remarkable property that for every vertex pair $(u, v)$ in $D-H^\prime$, any `long' subdivided path of $H^\prime$ contain at least one absorber for $(u, v)$.

\textbf{Step 3. Path-Covering Lemma.} The Path-Covering Lemma (Lemma \ref{Path-Cover Lemma} in Subsection $3.1$) implies that we can use a limited number of disjoint paths, of any lengths, to cover all vertices of $D-H^\prime$.

Consequently, by using the absorbing property of $H^\prime$, we will absorb these disjoint paths of suitable lengths into $H^\prime$ to obtain the desired arbitrary $(\alpha, \beta)$-Hamiltonian $H$-linked subdigraph. This strongly suggests that the main theorem holds.

\smallskip

\textbf{Extremal case}

For the extremal case, we employ the traditional structural analysis method to demonstrate that the main theorem holds. Or equivalently, we will show that

\textbf{Step 4.} The digraph $D$ falls into the Extremal Case $1$ (\textbf{EC1}), which is defined in Subsection $3.2$ below. We will establish that $D$ is $(\alpha, \beta)$-arbitrary Hamiltonian $H$-linked in this case.

\smallskip

In particular, our approach to proving the Path-Covering Lemma relies on a directed version of expanders known as robust outexpanders. This concept was explicitly introduced by K\"{u}hn, Osthus and Treglown \cite{Kuhn01}. The notion of robust expansion has played a crucial role in the solution of several conjectures related to the packing of Hamiltonian cycles and paths in (di)graphs. For more recent applications of the theory of robust outexpanders, we recommend interested readers to refer to \cite{Kelly,Keevash1,Kuhn0,Kuhn01,Kuhn1,Staden}.
\section{Proof of Theorem \ref{song2}}
\subsection{Non-extremal case}
Let $H$ be a digraph with $k$ arcs and $\delta(H)\geq1$. In this section, all statements assume that $D$ is a digraph on $n\geq C_0k$ vertices and satisfies $\delta^0(D)\geq n/2+k$, as stated in Theorem \ref{song2}.
Additionally, we suppose that $D$ is stable, and let $\alpha, \beta, \varepsilon, \varepsilon^\prime$, $\varepsilon_1$ and $\gamma$ be parameters chosen such that $0<1/C_0< \alpha, \beta, \gamma\ll\varepsilon^\prime\ll\varepsilon_1\ll\varepsilon\ll1$.
\subsubsection{Connecting and absorbing}
The following lemma asserts that any two distinct vertices can be connected by a short directed path in $D$.
\begin{lemma}\emph{(}Connecting Lemma\emph{)}\label{s1}
Let $D$ be a digraph on $n\geq C_0k$ vertices with $\delta^0(D)\geq n/2+k$, and $D$ is stable. The parameters $\gamma$ and $\varepsilon^\prime$ satisfy $0<1/C_0\ll\gamma\ll\varepsilon^\prime\ll1$. Let $P_1$ and $P_2$ be two disjoint paths of length at most $3$ in $D$. Then there exists a $q$-path with $q\leq4$ in $D$ that connects the paths $P_1$ and $P_2$. Furthermore, this conclusion still holds even if at most $\gamma n$ vertices are forbidden to be used on this connecting path.
\end{lemma}
\begin{proof}
Assume that the initial arc and the terminal arc of $P_1$ and $P_2$ are $ab$, and $cd$, respectively. There is nothing to prove if $bc\in A(D)$. So we assume that $bc\notin A(D)$.
Let $U$ be a vertex subset of $D$ with $|U|\leq \gamma n$, and define $D_0=D[V(D)\setminus(U\cup V(P_1-b)\cup V(P_2-c))]$. If $N^+_{D_0}(b)\cap N^-_{D_0}(c)\neq\emptyset$, then there exists a vertex $x\in N^+_{D_0}(b)\cap N^-_{D_0}(c)$, and the desired connecting path is $P=bxc$. Otherwise, note that  $$|N^+_{D_0}(b)|, |N^-_{D_0}(c)|\geq\delta^0(D)-(\gamma n+6)\geq n/2+k-(\gamma n+6)\geq(1/2-\varepsilon^\prime)n.$$ Then we may deduce that $e^+(N^+_{D_0}(b), N^-_{D_0}(c))>(\varepsilon^\prime n)^2$, since $D$ does not satisfy the extremal condition with $(U_1, U_2)_{\textbf{EC}}=(N^+_{D_0}(b), N^-_{D_0}(c))$. This suggests that there exists an arc $xy$ from $N^+_{D_0}(b)$ to $N^-_{D_0}(c)$, and the path $P=bxyc$ connects paths $P_1$ and $P_2$. Thus, the lemma is proved.
\end{proof}

\smallskip

We introduce the following two standard probabilistic tools:
\begin{lemma}\label{Chernoff} \cite{Janson}
$(i)$ Chernoff's inequality: Let $X$ be a sum of independent binomial random variables with expectation $\mathbb{E}X$, and let $a$ be any real number with $0<a<3/2$. Then $$\mathbb{P}(|X-\mathbb{E}X|>a\mathbb{E}X)<2e^{-\frac{a^2}{3}\mathbb{E}X}.$$

\noindent $(ii)$ Markov's inequality: If $X$ is a non-negative integer valued random variable with the expectation $\mathbb{E}X$, then for any $a>0$, $$\mathbb{P}(X\geq a)\leq\frac{\mathbb{E}X}{a}.$$
\end{lemma}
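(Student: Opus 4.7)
The plan is to record the two standard textbook derivations; both parts are classical results that the authors cite to \cite{Janson} rather than prove from scratch, so my proposal is just to write down what a self-contained proof would look like.

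For Markov's inequality, I would give a one-line argument from the definition of expectation. For any $a>0$ and any non-negative integer-valued $X$,
\[
\mathbb{E}X \;=\; \sum_{k\geq 1} k\,\mathbb{P}(X=k) \;\geq\; a\sum_{k\geq a}\mathbb{P}(X=k) \;=\; a\,\mathbb{P}(X\geq a),
\]
and dividing by $a$ yields $\mathbb{P}(X\geq a)\leq \mathbb{E}X/a$; the corner case $a\leq 0$ is automatic since the probability is at most $1$.

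For Chernoff's inequality the plan is the Cram\'er--Chernoff exponential-moments method. Writing $X=\sum_{i=1}^{m} X_i$ with $X_i$ independent Bernoulli variables of means $p_i$ and $\mu=\mathbb{E}X=\sum_i p_i$, for the upper tail I would apply Markov's inequality to $e^{tX}$ with $t>0$ to obtain
\[
\mathbb{P}\bigl(X>(1+a)\mu\bigr) \;\leq\; e^{-t(1+a)\mu}\prod_{i=1}^{m}\mathbb{E}\bigl[e^{tX_i}\bigr],
\]
then bound each factor by $\mathbb{E}[e^{tX_i}]\leq \exp\bigl(p_i(e^{t}-1)\bigr)$ using $1+x\leq e^{x}$, collapse the product to $\exp\bigl(\mu(e^{t}-1)\bigr)$, and optimise by choosing $t=\log(1+a)$ to obtain the classical bound $\exp\bigl(-\mu\bigl((1+a)\log(1+a)-a\bigr)\bigr)$. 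The lower tail is handled symmetrically using $t<0$ together with $\mathbb{E}[e^{tX_i}]\leq\exp(p_i(e^t-1))$, and summing the two tails supplies the leading factor $2$ in the stated inequality.

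The only quantitatively delicate step is verifying the numerical inequality $(1+a)\log(1+a)-a\geq a^{2}/3$ on the range $0<a<3/2$, which is what converts the sharp Chernoff exponent into the stated exponent $-a^{2}\mu/3$. This is routine single-variable calculus: the difference vanishes to second order at $a=0$, and a short computation shows its second derivative is non-negative on $(0,3/2)$. No genuine obstacle arises, which is exactly why the authors invoke \cite{Janson} instead of writing out the argument.
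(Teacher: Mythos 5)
The paper gives no proof of this lemma; it is simply cited from Janson, {\L}uczak and Ruci\'nski's \emph{Random Graphs}, so there is no in-paper argument to compare against. Your Markov step is correct, and the overall exponential-moment framework for the Chernoff step is the standard one. However, your ``routine single-variable calculus'' check contains a genuine error.

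You claim that $f(a)=(1+a)\log(1+a)-a-a^2/3$ has non-negative second derivative on $(0,3/2)$. In fact $f''(a)=\frac{1}{1+a}-\frac{2}{3}$, which vanishes at $a=1/2$ and is \emph{strictly negative} for $a\in(1/2,3/2)$. So $f$ is not convex on the stated range, and the argument you sketch does not establish $f\geq 0$ there. The inequality $(1+a)\log(1+a)-a\geq a^2/3$ is in fact true on $(0,3/2]$ (and fails soon after: at $a=2$ the left side is $\approx 1.296$ while the right side is $\approx 1.333$), but justifying it needs a more careful one-variable argument — e.g.\ noting $f(0)=f'(0)=0$, $f'$ increases up to $a=1/2$ and then decreases with a unique further zero, so $f$ is unimodal, and checking $f(3/2)>0$ directly. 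Alternatively, and this is closer to how \cite{Janson} actually does it, one first derives the Bernstein-type bound $\mathbb{P}(X\geq\mathbb{E}X+t)\leq\exp\bigl(-\tfrac{t^2}{2(\mathbb{E}X+t/3)}\bigr)$; substituting $t=a\,\mathbb{E}X$ gives exponent $-\tfrac{a^2\mathbb{E}X}{2(1+a/3)}$, and for $0<a<3/2$ we have $2(1+a/3)<3$, which yields the stated exponent at once without any delicate numerical inequality. You should replace the false convexity claim with one of these arguments; also, for the lower tail you need the restriction $a<1$ (otherwise the event is empty) and the bound $(1-a)\log(1-a)+a\geq a^2/2\geq a^2/3$, which is fine but should be stated rather than waved at as ``symmetric.''
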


Let $H$ be a digraph with $k$ arcs and $\delta(H)\geq1$. In the following, we always suppose that $V(H)=\{v_1, \ldots, v_{|V(H)|}\}$. For convenience, let $W=\{f(v_1), \ldots, f(v_{|V(H)|})\}$, and define $f(v_i):=v_i$ for each $i\in[|V(H)|]$. 

For any vertex pair $(u, v)$ (possibly $u=v$) in $D$, we denote by $\mathcal{A}_{uv}$ the family of all $4$-paths absorbing $(u, v)$. Then, we can conclude that for any vertex pair of $D$, there are at least $\gamma n^4$ different $4$-paths to absorb it.
\begin{fact}\label{qqqw}
Let $D^\prime$ be a digraph on $n\geq C_0k$ vertices with $\delta^0(D^\prime)\geq n/2-k$, and $D^\prime$ is stable. The parameters $\gamma$ and $\varepsilon^\prime$ satisfy $0<1/C_0\ll\gamma\ll\varepsilon^\prime\ll1$. 
Then for any vertex pair $(u, v)$, there are at least $\gamma n^4$ $4$-paths absorbing $(u, v)$ in $D^\prime$, that is, $|\mathcal{A}_{uv}|\geq\gamma n^4$.
\end{fact}
\begin{proof}
Let $U_1=N_{D^\prime}^-(u)$ and $U_2=N_{D^\prime}^+(v)$. By the minimum semi-degree condition of $D^\prime$, we have $|U_i|\geq n/2-k$ for every $i\in[2]$. Since $D^\prime$ is stable, we obtain that $e^+(U_1, U_2)>(\varepsilon^\prime n)^2$. Furthermore, by the lower bound of $\delta^0(D^\prime)$, we deduce that for any given arc $z_1z_2$ with the vertex $z_1\in U_1$ and $z_2\in U_2$, the following holds: $$|N^-_{D^\prime}(z_1)\setminus\{u, v, z_2\}|\geq n/2-k-3\ \mbox{and}\  |N^+_{D^\prime}(z_2)\setminus\{u, v, z_1\}|\geq n/2-k-3.$$ This implies that the number of $4$-paths $z_0z_1z_2z_3$ with $z_0\in N^-_{D^\prime}(z_1)\setminus\{u, v, z_2\}$ and $z_3\in N^+_{D^\prime}(z_2)\setminus\{u, v, z_0, z_1\}$ absorbing $(u, v)$ is at least
\begin{equation*}
\begin{split}
 (\varepsilon^\prime n)^2\cdot(n/2-k-3)\cdot(n/2-k-4)\geq(\varepsilon^\prime)^2n^4/5\geq \gamma n^4,
\end{split}
\end{equation*}
where the last inequality follows from the fact that $\gamma \ll\varepsilon^\prime$.
\end{proof}
Prior to presenting the absorption lemma, we introduce the following preparatory lemma. Lemma \ref{absor1} establishes that in a digraph $D^\prime$ with sufficiently large minimum semi-degree, there exists a small family $\mathcal{F}$ of disjoint absorbers, satisfying that for any vertex pair $(u, v)$ in $D^\prime$, there are enough absorbers in $\mathcal{F}$ to absorb $(u, v)$. Lemma \ref{absor1} further demonstrates how to partition $\mathcal{F}$ into $l$ disjoint subsets $\mathcal{F}_1, \ldots, \mathcal{F}_l$ while preserving its absorption capacity. Specifically, for each subset $\mathcal{F}_i$, by utilizing vertices in $V(D^\prime-\mathcal{F})$ and the extremal condition (\textbf{EC}), there exists a path $L_i$ that covers all absorbers in $\mathcal{F}_i$.


\begin{lemma}\label{absor1}
Let $D^\prime$ be a digraph on $n\geq C_0k$ vertices with $\delta^0(D^\prime)\geq n/2-k$, and $D^\prime$ is stable. Suppose that $\gamma, \varepsilon^\prime, \lambda
$ are parameters satisfying $0<1/C_0\ll\gamma, \lambda
\ll\varepsilon^\prime\ll1$, and let $l$ be a positive integer with $l\leq k$.  
Then there exists a family $\mathcal{F}$ of at most $\gamma n$ disjoint $4$-paths
in $D^\prime$ such that

$(i)$ for every vertex pair $(u, v)$, we have
$|\mathcal{A}_{uv}\cap\mathcal{F}|\geq\gamma^2n$;

$(ii)$ for a partition $|\mathcal{F}|=f_1+\cdots+f_l$ with $\lambda|\mathcal{F}|<f_i<(1-\lambda)|\mathcal{F}|$ for each $i\in[l]$, there exists a partition $\mathcal{F}=\mathcal{F}_1\cup \cdots\cup\mathcal{F}_l$ with $\mathcal{F}_i=\{F_{i, 1}, \ldots, F_{i, f_i}\}$ satisfying:

$1.$ For any vertex pair $(u, v)$ of $D-V(\mathcal{F})$ and any $i\in[l]$, $\mathcal{F}_i$ contains at least one absorber for $(u, v)$;

$2.$ There exist disjoint paths $L_1, \ldots, L_l$ in $D^\prime$ with each $L_i$ structured as $L_i=F_{i, 1}\circ P_{i, 1}\circ F_{i, 2}\cdots\circ F_{i, f_i-1}\circ P_{i, f_i-1}\circ F_{i, f_i}$, where each connecting path $P_{i, j}$ has length at most $3$.
\end{lemma}
\begin{proof}
We first prove (i). Let $\gamma_1$ be a real number such that $\frac{\gamma_1^2}{2}\geq \gamma$ and $2\gamma_1^3\leq \gamma$. We construct a family $\mathcal{F}^\prime$ of $4$-sets from $[V(D^\prime)]^4$ at random by including each of $\binom{n}{4}\sim n^4$ possible $4$-sets independently with probability $\gamma_1^3 n^{-3}$ (Note that some of the selected $4$-sets may not be absorbing at all). By Chernoff's inequality, since $\mathbb{E}|\mathcal{F}^\prime|=n^4\cdot\gamma_1^3 n^{-3}=\gamma_1^3n$, we have: $$\mathbb{P}(|\mathcal{F}^\prime|\geq2\gamma_1^3n)\leq\mathbb{P}(||\mathcal{F}^\prime|-\mathbb{E}|\mathcal{F}^\prime||\geq \gamma_1^3n)\leq2e^{-\frac{1}{3}\mathbb{E}|\mathcal{F}^\prime|}.$$ Thus, with probability $1-o(1)$, as $n\rightarrow \infty$:

\smallskip

\emph{$(1)$ $|\mathcal{F}^\prime|<2\gamma_1^3n\leq\gamma n$. Similarly, for every vertex pair $(u, v)$, we also have that $|\mathcal{A}_{uv}\cap\mathcal{F}^\prime|>\gamma_1^4n/3$.}

\smallskip

\noindent Next, we bound the expected number of intersecting pairs of $4$-sets in $\mathcal{F}^\prime$. The expected number is at most
\begin{equation*}
\begin{split}
n^4\times (C_4^1n^3+C_4^2n^2+C_4^3n)\times(\gamma_1^3 n^{-3})^2\leq n^4\times4\times4\times n^3\times(\gamma_1^3 n^{-3})^2=16\gamma_1^6n,
\end{split}
\end{equation*}
By Markov's inequality, with $X$ denoting the number of intersecting pairs of $4$-sets in $\mathcal{F}^\prime$ and $a=17\gamma_1^6n$, we get: $$\mathbb{P}(X\geq 17\gamma_1^6n)\leq\frac{\mathbb{E}X}{a}=\frac{16\gamma_1^6n}{17\gamma_1^6n}=16/17.$$ This implies that

\smallskip

\emph{$(2)$ with probability at least $1/17$, as $n\rightarrow\infty$, there are at most $17\gamma_1^6n$ pairs of intersecting $4$-sets in $\mathcal{F}^\prime$.}

\smallskip

\noindent Combining $(1)$ and $(2)$, we conclude that with positive probability, the family $\mathcal{F}^\prime$ satisfies both properties, which implies that there exists one such family, and, for simplicity, we define this family to be $\mathcal{F}^{\prime\prime}$. From $\mathcal{F}^{\prime\prime}$ we delete all $4$-sets that intersect other $4$-sets, as well as all $4$-sets that are not absorbers, and denote the remaining subfamily by $\mathcal{F}$. Clearly, by $(1)$ and $(2)$ again, we have: $$|\mathcal{F}|\geq2\gamma_1^3n-17\gamma_1^6n\geq\frac{3\gamma_1^3n}{2}.$$ Moreover, the family $\mathcal{F}$ consists of disjoint absorbers, and for every vertex pair $(u, v)$,
\begin{equation*}
\begin{split}
|\mathcal{A}_{uv}\cap\mathcal{F}|>\frac{\gamma_1^4n}{3}-2\cdot17\gamma_1^6n>\frac{\gamma_1^4n}{4}\geq \gamma^2n.
\end{split}
\end{equation*}
This completes the proof of (i).

\smallskip

Next we prove (ii). To establish property (ii)-$1$, we probabilistically construct the partition $\mathcal{F}=\mathcal{F}_1\cup \cdots\cup\mathcal{F}_l$ with $|\mathcal{F}_i|=f_i$ for all $i\in[l]$. 
For each absorber $A\in \mathcal{F}$, assign it uniformly at random to one of the $l$ subsets $\{\mathcal{F}_j\}_{j=1}^l$, ensuring that the cardinality condition $|\mathcal{F}_i|=f_i$ is maintained for all $i\in[l]$. Define $F_{uv}^j$ as the random variable counting the number of absorbers of $(u, v)$ in $\mathcal{F}_j$ for $j\in[l]$. From (i), $|\mathcal{A}_{uv}\cap\mathcal{F}|\geq\gamma^2n$ and $|\mathcal{F}|<\gamma n$, giving
\begin{equation*}
\begin{split}
\mathbb{E}F_{uv}^j=\frac{|\mathcal{A}_{uv}\cap\mathcal{F}|}{|\mathcal{F}|}\cdot|\mathcal{F}_j|
\geq\frac{\gamma^2n}{\gamma n}\cdot |\mathcal{F}_j|=\gamma |\mathcal{F}_j|.
\end{split}
\end{equation*}

Applying Chernoff's inequality with deviation parameter $a=\frac{1}{2}$, and noting $|\mathcal{F}_j|\geq\lambda |\mathcal{F}|\geq\lambda\gamma^2n$ (from $|\mathcal{F}|\geq|\mathcal{A}_{uv}\cap\mathcal{F}|\geq \gamma^2n$), we bound the failure probability for any $j\in[l]$:
\begin{align}\label{eee}
\mathbb{P}\left(F_{uv}^j<\frac{\mathbb{E}F_{uv}^j}{2}\right)\leq\mathbb{P}\left(|F_{uv}^j-\mathbb{E}F_{uv}^j|
>\frac{\mathbb{E}F_{uv}^j}{2}\right)
<2\exp\left({-\frac{\mathbb{E}F_{uv}^j}{12}}\right)\leq2\exp\left(-\frac{\lambda\gamma^3n}{12}\right).
\end{align}
Union bounding over all $\binom{n}{2}<n^2$ vertex pairs in $D^\prime-V(\mathcal{F})$, the total failure probability satisfies: 
\begin{equation*}
\begin{split}
\sum_{(u, v)}\mathbb{P}\left(\exists j: F_{uv}^j<\frac{\gamma|\mathcal{F}_j|}{2}\right)
<2n^2\cdot\exp\left(-\frac{\lambda\gamma^3n}{12}\right)\rightarrow 0,\ \mbox{as} \ n\rightarrow\infty.
\end{split}
\end{equation*}
Thus, with probability approaching $1$, all $\mathcal{F}_j$ contain at least $\frac{\gamma|\mathcal{F}_j|}{2}>0$ absorbers for $(u, v)$. By the probabilistic method, this guarantees the existence of a partition $\mathcal{F}=\mathcal{F}_1\cup\cdots\cup\mathcal{F}_l$, where every $\mathcal{F}_j$ contains at least one absorber for each pair $(u, v)$, establishing (ii)-$1$. 

\smallskip

Finally, we complete the proof of (ii)-$2$. Let $\mathcal{F}_1\cup \cdots\cup\mathcal{F}_l$ be a partition of $\mathcal{F}$ satisfying (ii)-$1$. For any index $i\in[l]$, we have that $|\mathcal{F}_i|=f_i$. For each $i\in[l]$, we construct $L_i$ inductively. For any $i\in[l]$, we will show by induction on $q$ that for each $q\in[f_i]$, there exists a path $S_q$ in $D^\prime$ of the form $S_1=F_{i, 1}$ and for $q\geq2$,
\begin{equation*}
\begin{split}
S_q=F_{i, 1}\circ P_{i, 1}\cdots\circ F_{i, q-1}\circ P_{i, q-1}\circ F_{i, q},
\end{split}
\end{equation*}
where each of the paths $P_{i, 1}, \ldots, P_{i, q-1}$ has the length at most $3$. Note that $L_i=S_{f_i}$.

It is obvious for the case $q=1$. Assume that the statement is true for some $q-1\in[f_i-1]$. Moreover, we suppose that the terminal (resp., the initial) of $F_{i, q-1}$ (resp., $F_{i, q}$) is $b$ (resp., $a$). 
Denote by $D_{q-1}$ the subdigraph induced by the vertex set $V_{q-1}=V(D^\prime)\setminus V((S_{q-1}-b)\cup(\mathcal{F}-a))$ in $D^\prime$. Since
\begin{equation*}
\begin{split}
|V(S_{q-1}\cup \mathcal{F})|<(4+4)\cdot|\mathcal{F}|\leq8\cdot2\gamma_1^3 n<\gamma n,
\end{split}
\end{equation*}
the conditions of Lemma \ref{s1} are satisfied. Thus,  there is a path $P_{i, q-1}$ of length at most $3$ in $D_{q-1}$ connecting $b$ to $a$. Crucially, $V(P_{i, q-1})\setminus\{a, b\}$ is disjoint from $V(\mathcal{F}\cup S_{q-1})$, and so the desired path
\begin{equation*}
\begin{split}
S_{q}=S_{q-1}\circ P_{i, q-1}\circ F_{i, q}.
\end{split}
\end{equation*}
By induction, the full path $L_i=S_{f_i}$ is constructed. The disjointness of $L_1, \ldots, L_l$ follows from the iterative removal of used vertices in $D_{q-1}$. This proves (ii)-$2$.

\smallskip

Thus, this lemma is ture.
\end{proof}
By using Fact \ref{qqqw} and Lemma \ref{absor1}, we will now present the absorption lemma.
\begin{lemma}\emph{(}Absorbing Lemma\emph{)}\label{absorbing lemma}
Let $H$ be a digraph with $k$ arcs and $\delta(H)\geq1$, and $C_0$ be a constant and parameters $\alpha, \beta, \gamma, \lambda, \varepsilon^\prime, \varepsilon$ satisfy that $0<1/C_0\ll\alpha, \beta, \gamma, \lambda\ll\varepsilon^\prime\ll\varepsilon\ll1$. Suppose $\mathcal{N}=\{n_1, \ldots, n_k\}$ is an integer set with $n_1\geq\cdots\geq n_k\geq4$ and $\sum_{n_i<\alpha n}n_i\leq\beta n$. Let $l\in[k]$ be the largest subscript such that $n_l>\alpha n$. If $D$ is a digraph of order $n\geq C_0k$ with $\delta^0(D)\geq n/2+k$, then there is an $H$-linked subdigraph $H^\prime\subseteq D$ with $|V(H^\prime)|\leq\gamma n$ such that

\noindent $(i)$ the lengths of the subdivided paths $P_1, \ldots, P_l, P_{l+1}, \ldots, P_k$ in $H^\prime$ are $n_1^\prime, \ldots, n_k^\prime$, where $n_i^\prime\leq n_i-6$ for $i\in[l]$, and $n_i^\prime=n_i$ for $i\in\{l+1, \ldots, k\}$, and

\noindent $(ii)$ for any vertex pair $(u, v)$, $P_i$ $(\mbox{for}\ i\in[l])$ contains at least one absorber for $(u, v)$.
\end{lemma}
\begin{proof}
Let $V(H)=\{v_1, \ldots, v_{|V(H)|}\}$. 
We relabel the vertices in $f(V(H))$ as $f(V(H))=\cup_{i=1}^k \{v_i, v_{i}^\prime\}$ such that, in the desired Hamiltonian $H$-linked subdigraph, the length of the path from $v_i$ to $v_{i}^\prime$ is $n_i$. 
Note that $l$ is an absolute constant independent of $n$ since $l\leq \frac{n}{\alpha n}=\frac{1}{ \alpha}$. Let $D^\prime=D-f(V(H))$, and clearly $\delta^0(D^\prime)\geq n/2-k$ due to $|f(V(H))|\leq 2k$.

\smallskip

By Lemma \ref{absor1}-(i), there exists a family $\mathcal{F}$ of at most $\gamma n$ disjoint absorbing $4$-paths in $D^\prime$ such that for every vertex pair $(u, v)$, we have $|\mathcal{A}_{uv}\cap\mathcal{F}|\geq\gamma^2n$. Additionally, Lemma \ref{absor1}-(ii) shows that $\mathcal{F}$ can be partitioned into $l$ distinct subfamilies, say $\mathcal{F}_1, \ldots, \mathcal{F}_l$, such that 
each subfamily contains at least an absorber for any vertex pair $(u, v)$ in $V(D^\prime-\mathcal{F})$. Importantly, in this process, we require the number of absorbers in each subfamily to be at least $\lambda |\mathcal{F}|$. 
Also, Lemma \ref{absor1}-(ii) ensures that we can connect all $4$-paths in $\mathcal{F}_i$ into a path $L_i$ of length at most $n_i-6$ for all $i\in[l]$, and these $l$ paths $L_1, \ldots, L_l$ are disjoint with $\sum_{i=1}^l|V(L_i)|\leq\gamma n$.

\smallskip

On the one hand, in the desired $H$-linked subdigraph $H^\prime$, for all $(v_j, v_j^\prime)$-paths with $j\in\{l+1, \ldots, k\}$, we can greedily construct these paths. Specifically, for any $j\in\{l+1, \ldots, k\}$, we can choose a vertex $u_3\in N^+_{D^\prime}(v_{j})$ and $u_{i+1}\in N^+_{D^\prime}(u_i)\setminus \{u_3, \ldots, u_{i}\}$ for all $i\in\{3, \ldots, n_{j}-3\}$, and there an arc from $N^+_{D^\prime}(u_{n_{j}-2})\setminus \{v_1, u_3, \ldots, u_{n_{j}-2}, v_2\}$ to $N^-_{D^\prime}(v_{j}^\prime)\setminus \{v_1, u_3, \ldots, u_{n_{j}-2}\}$. This is possible because $D$ does not satisfy the extremal condition, and $n_{l+1}+\cdots+n_k\leq\beta n\ll\varepsilon^\prime n$, and by the lower bound of $\delta^0(D^\prime)$, the cardinalities of these two vertex sets are greater than $(1/2-\varepsilon)n$. By repeating this process for all any $j\in\{l+1, \ldots, k\}$, we obtain all internally disjoint paths $P_{l+1}, P_{l+2}, \ldots, P_k$ of length $n_{l+1}, n_{l+2}, \ldots, n_k$, respectively, as required in the desired $H$-linked subdigraph $H^\prime$.

\smallskip

On the other hand, for each path of length $n_i> \alpha n$ for every $i\in[l]$, we will connect its end-vertices to the corresponding absorbing path $L_i$. Without loss of generality, write $L_i=a_i\cdots b_i$ for each $i\in[l]$. In the remaining digraph $D^{\prime\prime}=D^\prime-\bigcup_{i=1}^l L_i-\bigcup_{j=l+1}^kP_j$, for any $i\in[l]$, we define $R_1=N^+_{D^{\prime\prime}}(v_{i})$ and $R_2=N^-_{D^{\prime\prime}}(a_i)$. By the lower bound of $\delta^0(D)$, we have that $|R_1|, |R_2|\geq n/2-k-\beta n-\gamma n-4l\geq(1/2-\varepsilon)n$, which implies the existence of an arc $xx^\prime$ from $R_1$ to $R_2$ since $D^{\prime\prime}\subseteq D$ does not meet the extremal condition. Similarly there is an arc $yy^\prime$ from $N^+_{D^{\prime\prime}}(b_i)$ to $N^-_{D^{\prime\prime}}(v_{i}^\prime)$. This yields a path $v_{i}xx^\prime L_lyy^\prime v_{i}^\prime$ of length at most $n_i$. Repeating this process for all $i\in[l]$, we can construct all desired paths of length at most $n_{1}, n_{2}, \ldots, n_l$, respectively.

\smallskip

Let $H^\prime$ be the subdigraph obtained from the union of these paths of lengths at most $n_1, \ldots, n_k$. Clearly, $H^\prime$ is $H$-linked. By Lemma \ref{absor1}-(i), it follows directly that the property (ii) of this lemma holds. 
Hence, this completes the proof of this lemma.
\end{proof}
\subsubsection{Path-Covering}
Recall that, as shown in Lemma \ref{absorbing lemma}, we can obtain an $H$-linked subdigraph $H^{\prime}$. 
In the following lemma, we will demonstrate that $V(D-H^{\prime})$ can be covered  by a Hamiltonian path. Before presenting the proof, we introduce some definitions and a result of K\"{u}hn, Osthus and Treglown.
\begin{definition} \emph{(Robust $(\nu, \tau)$-outexpander)
Let $\nu$ and $\tau$ be real numbers with $0<\nu\leq\tau<1$. Suppose $D$ is a digraph, and the vertex subset $S\subseteq V(D)$. The \emph{$\nu$-robust out-neighbourhood $RN^+_{\nu, D}(S)$ of $S$} is defined as the set of all vertices $x$ in $D$ that have at least $\nu|V(D)|$ in-neighbourhoods in $S$. Moreover, $D$ is called a \emph{robust $(\nu, \tau)$-outexpander} if
$|RN^+_{\nu, D}(S)|\geq|S|+\nu|V(D)|$ for all $S\subseteq V(D)$ with $\tau|V(D)|<|S|<(1-\tau)|V(D)|$}.
\end{definition}
The proof of Lemma \ref{Path-Cover Lemma} relies on a result of K\"{u}hn, Osthus and Treglown \cite{Kuhn1}, which establishes the existence of a Hamiltonian cycle in a digraph with a small lower bound on semi-degree and a certain expansion property.
\begin{theorem}\cite{Kuhn1}\label{KK}
Let $n_0$ be a positive integer, and let $\nu, \tau$ and $\xi$ be positive constants such that $1/n_0\ll\nu\leq\tau\ll\xi< 1$. If $D$ is a digraph on $n\geq n_0$ vertices with $\delta^0(D)\geq\xi n$ and is a robust $(\nu, \tau)$-outexpander, then $D$ contains a Hamiltonian cycle.
\end{theorem}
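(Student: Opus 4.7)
My plan is to prove this via the regularity method combined with absorption, which is the standard route for Hamilton cycle results on robustly expanding digraphs. The skeleton of the cycle will be found in the reduced digraph produced by Szemer\'{e}di's regularity lemma, the bulk is realised by the directed Blow-up Lemma, and a small absorbing path inside $D$ handles the exceptional vertices that regularity sets aside.

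First I would apply the directed Szemer\'{e}di regularity lemma to $D$ with regularity parameter $\varepsilon$ chosen so that $\nu \ll \varepsilon \ll \xi$, obtaining an exceptional set $V_0$ with $|V_0|\leq \varepsilon n$ and equal-sized clusters $V_1,\ldots,V_M$. Let $R$ be the reduced digraph on $\{V_1,\ldots,V_M\}$, keeping an arc $V_iV_j$ when the pair $(V_i,V_j)$ is $\varepsilon$-regular with density at least some $d$, where $\varepsilon \ll d \ll \xi$. A routine averaging gives $\delta^0(R)\geq(\xi-\varepsilon')M$, and the key inherited property is that $R$ is itself a robust $(\nu/2,\,2\tau)$-outexpander, since a cluster set violating expansion in $R$ would force a non-expanding vertex set in $D$. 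Second, working inside $D$ and using the semi-degree bound together with expansion, I would construct a short \emph{absorbing path} $P_{\text{abs}}$ of length $o(n)$ capable of swallowing any small leftover set between fixed endpoints; the construction mirrors Lemma \ref{absorbing lemma}, gluing together many short absorbers by short connecting paths. Third, using the outexpansion of $R$, I would produce a closed spanning walk in $R$ that visits each cluster the same number of times, equivalently an integer feasible solution to the associated transportation LP, where expansion of $R$ rules out any bottleneck that might block the integer flow. Finally, I would apply the directed Blow-up Lemma arc-by-arc along this closed walk, embedding a long path $P^{*}$ that covers every cluster vertex not reserved for $P_{\text{abs}}$ and whose endpoints meet those of $P_{\text{abs}}$; absorbing the vertices of $V_0$ and any residue into $P_{\text{abs}}$ then closes $P_{\text{abs}} \cup P^{*}$ into a Hamilton cycle of $D$.

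The main obstacle is the bookkeeping across these three tools. One must reserve, inside each cluster, both vertices for the absorbing path and \emph{link} vertices that will serve as endpoints for successive Blow-up Lemma applications; after these reservations the clusters must still form super-regular pairs of the right density. Simultaneously the closed walk in $R$ must exist as an integer (and not merely fractional) solution so that every non-reserved cluster vertex is used exactly once. Balancing these three constraints — the integrality of the walk, the availability of enough link vertices, and the absorbing capacity of $P_{\text{abs}}$ — is the technical heart of the argument, and the robust outexpansion hypothesis is precisely what lets each step go through.
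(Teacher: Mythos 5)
This theorem is not proved in the paper; it is cited as a black box from K\"{u}hn, Osthus and Treglown \cite{Kuhn1}, and the authors of the present paper explicitly note in the surrounding discussion that the original proof relies on the Regularity Lemma while an alternative regularity-free proof was later given by Lo and Patel \cite{Allan}. Your proposal therefore cannot be compared against a proof in this paper, only against the cited source.

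Regarding your sketch itself: the broad architecture (regularity lemma, reduced digraph inheriting robust outexpansion, super-regular pairs, Blow-up-type filling) does match the spirit of the original K\"{u}hn--Osthus--Treglown argument. However, the detail where you diverge from the actual proof is the absorption component. The original proof does not build an absorbing path; instead it exploits robust outexpansion of the reduced digraph to construct \emph{shifted walks}, which let one route between any two clusters while carefully controlling how many times each cluster is visited, and then the exceptional vertices are handled by incorporating them into super-regular pairs before the filling step. Your proposal to ``swallow'' $V_0$ and any residue into an absorbing path is a different, and in this setting somewhat heavier, mechanism than what is actually used; it also leaves unaddressed how you would ensure the absorbing path can actually accept arbitrary leftover vertices of $V_0$, since those vertices have no regularity structure and robust expansion alone does not immediately give the dense absorber supply that Lemma~\ref{absorbing lemma} in this paper obtains from the semi-degree bound plus the non-extremality assumption. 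So while your plan is in the right family of techniques, the key connecting idea (shifted walks with balanced cluster multiplicities) is missing, and the absorption step as stated has a gap in how exceptional vertices are guaranteed to be absorbable.
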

Theorem \ref{KK} and its undirected version have been utilized as a black box in various papers, including \cite{Ferber,Kuhn00,Kuhn0,Kuhn2,Knox,Staden}. It's worth noting that Theorem \ref{KK} originally relies on Regularity Lemma. However, in $2012$, K\"{u}hn and Osthus \cite{K2} gave a brief proof of Theorem \ref{KK}, and in $2018$, Lo and Patel \cite{Allan} provided a proof of Theorem \ref{KK} by applying ``sparse'' robust expanders instead of Regularity Lemma.

\smallskip

Now, we present the statement of the Path-Covering lemma. 
\begin{lemma}\emph{(}Path-Covering Lemma\emph{)}\label{Path-Cover Lemma} 
Let $H$ be a digraph with $k$ arcs and $\delta(H)\geq1$, and let $D$ be a stable digraph of order $n\geq C_0k$ with $\delta^0(D)\geq n/2+k$. Suppose the parameters $\gamma$ and $\varepsilon^\prime$ satisfy $0<1/C_0\ll\gamma\ll\varepsilon^\prime\ll1$. If $H^\prime\subseteq D$ is an $H$-linked subdigraph with $|V(H^\prime)|\leq\gamma n$, then
the digraph $D-H^\prime$ contains a Hamiltonian path.
\end{lemma}
\begin{proof}
Let $\eta$ be a real number with $\gamma\ll\eta<\varepsilon^\prime/3$, and define $D^\prime=D-H^\prime$. Clearly, $$\delta^0(D^\prime)\geq(n/2+k)-\gamma n\geq (1/2-\eta)n.$$ 
Let $\nu$ and $\tau$ be positive constants such that $\nu\ll\tau/2\ll\gamma$ and $\nu\leq(\varepsilon^\prime)^2$.

We now demonstrate that $D^\prime$ is a robust $(\nu, \tau)$-outexpander by considering three cases for any vertex set $S\subseteq V(D^\prime)$. Firstly, if $(1/2+\varepsilon^\prime/2)n<|S|<(1-\tau)n$, then we can deduce that $RN_{\nu, D^\prime}^+(S)=V(D^\prime)$. This is because the lower of $\delta^0(D^\prime)$ guarantees that for any vertex $x\in V(D^\prime)$,
\begin{equation*}
\begin{split}
d^-_{S}(x)\geq \delta^0(D^\prime)-|\overline{S}|>(1/2-\eta)n-(1/2-\varepsilon^\prime/2)n=(\varepsilon^\prime/2-\eta)n\geq\nu n,
\end{split}
\end{equation*}
where the last inequality holds because $\nu\ll\eta<\varepsilon^\prime/3$.

\smallskip

Secondly, if $\tau n<|S|<(1/2-\varepsilon^\prime/2)n$, then we have that $|RN^+_{\nu, D^\prime}(S)|\geq|S|+\nu n$. Actually, using the lower bound of $\delta^0(D^\prime)$ once again, we obtain that
\begin{equation*}
\begin{split}
|S|\cdot\delta^0(D^\prime)\leq\sum_{y\in S}d^+(y)<|RN^+_{\nu, D^\prime}(S)|\cdot|S|+(n-|RN^+_{\nu, D^\prime}(S)|)\cdot\nu n.
\end{split}
\end{equation*}
Since $|S|<(1/2-\varepsilon^\prime/2)n$, we that
\begin{equation*}
\begin{split}
|S|\cdot(1/2-\eta)n-\nu n^2<|RN^+_{\nu, D^\prime}(S)|\cdot(|S|-\nu n)<|RN^+_{\nu, D^\prime}(S)|((1/2-\varepsilon^\prime/2)-\nu)n.
\end{split}
\end{equation*}
Rearranging this inequality, and using the fact that $\eta<\varepsilon^\prime/3$ and $\tau/2\ll\gamma\ll\varepsilon^\prime$ to further simplify, we get:
\begin{equation*}
\begin{split}
|RN^+_{\nu, D^\prime}(S)|>\frac{|S|\cdot(1/2-\eta)n-\nu n^2}{(1/2-\varepsilon^\prime/2)n-\nu n}
&=|S|+\frac{|S|(-\eta+\varepsilon^\prime/2+\nu)-\nu n}{(1/2-\varepsilon^\prime/2)-\nu }\\
&>|S|+\frac{|S|(\varepsilon^\prime/6+\nu)-\nu n}{1/2-\varepsilon^\prime/2-\nu}\geq|S|+\nu n,
\end{split}
\end{equation*}
where the last inequality follows from $\tau n<|S|$ and $\nu\ll\tau/2\ll\varepsilon^\prime$, which imply:
$$|S|(\varepsilon^\prime/6+\nu)-\nu n>\tau n\cdot(\varepsilon^\prime/6+\nu)-\nu n=\tau\varepsilon^\prime n/6-\nu n+\tau\nu n>\nu n/2-\varepsilon^\prime \nu n-\nu^2 n.$$

\smallskip

Finally we consider the case when $(1/2-\varepsilon^\prime/2)n\leq|S|\leq(1/2+\varepsilon^\prime/2)n$. To obtain a contradiction, assume $|RN^+_{\nu, D^\prime}(S)|<|S|+\tau n\leq(1/2+\varepsilon^\prime/2)n+\tau n$. Then we have that $|V(D^\prime)\setminus RN^+_{\nu, D^\prime}(S)|\geq(1/2-\varepsilon^\prime)n$. Sine $D^\prime$ is stable (as $D$ is stable and $D^\prime\subset D$), there are at least $(\varepsilon^\prime n)^2$ arcs from $S$ to $V(D^\prime)\setminus RN^+_{\nu, D^\prime}(S)$. This is because $|S|, |V(D^\prime)\setminus RN^+_{\nu, D^\prime}(S)|\geq(1/2-\varepsilon^\prime)n$ and $(U_1, U_2)_{\textbf{EC}}=(S, V(D^\prime)\setminus RN^+_{\nu, D^\prime}(S))$. On the other hand, by the definition of $RN^+_{\nu, D^\prime}(S)$, each vertex $z\in V(D^\prime)\setminus RN^+_{\nu, D^\prime}(S)$ has fewer than $\nu n$ in-neighbourhoods in $S$, which suggests that
\begin{equation*}
\begin{split}
e^+(S, V(D^\prime)\setminus RN^+_{\nu, D^\prime}(S))<|V(D^\prime)\setminus RN^+_{\nu, D^\prime}(S)|\cdot \nu n\leq \nu n^2.
\end{split}
\end{equation*}
This implies that $(\varepsilon^\prime)^2<\nu$, a contradiction. 
Therefore, $|RN^+_{\nu, D^\prime}(S)|\geq|S|+\nu n$ as desired.

Hence, we have shown that $D^\prime$ is a robust $(\nu, \tau)$-outexpander. By Theorem \ref{KK}, we conclude that $D^\prime$ contains a Hamiltonian cycle, which confirms this lemma.
\end{proof}
\subsubsection{Completion of Theorem \ref{song2}}
Recall that $H$ is a digraph with $k$ arcs and $\delta(H)\geq1$. Let $C_0$ be an integer, and let $\alpha_0, \beta_0 \in (0,1)$ be two real numbers. Fix $\alpha\in (0,\alpha_0]$ and $\beta\in (0,\beta_0]$. Suppose $D$ is a digraph on $n\geq C_0k$ vertices with $\delta^0(D)\geq n/2+k$. The parameters $\alpha, \beta, \varepsilon$, $\varepsilon_1$ and $\varepsilon^\prime$ satisfy $0<1/C_0\ll\alpha, \beta\ll\varepsilon^\prime\ll\varepsilon_1\ll\varepsilon\ll1$.

Let $\mathcal{N}=\{n_1, \ldots, n_k\}$ be a set of integers with $n_1\geq n_2\geq\cdots\geq n_k\geq4$ and
 $\sum_{n_i<\alpha n}n_i\leq\beta n$. Let $l\in[k]$ be the largest subscript such that $n_l>\alpha n$. By the Absorbing Lemma (Lemma \ref{absorbing lemma}), we obtain an $H$-linked subdigraph in $D$, called as $H^\prime$, with $|V(H^\prime)|\leq\gamma n$ and satisfying that

\noindent\emph{$(i)$ $H^\prime$ contains all paths $P_{l+1}, \ldots, P_k$ of lengths $n_{l+1}, \ldots, n_k$, respectively, and }

\noindent\emph{$(ii)$ in $H^\prime$, there exist $l$ `long' paths, defined as $P_1, \ldots, P_l$, where each $P_i$ has length less than $n_i$ for $i\in[l]$. Additionally, each `long' path $P_i$ contains an absorber for any vertex pair $(u, v)$ of $D-H^\prime$.}

\smallskip

Next, we apply the Path-Covering Lemma (Lemma \ref{Path-Cover Lemma}) to get a Hamiltonian path $P$ of the digraph $D-H^\prime$. Then we partition this path $P$ into $l$ disjoint paths of appropriate lengths, denoted as $Q_1, \ldots, Q_l$. For each $i\in[l]$, the path $Q_i$ is of the form $Q_i=c_i\cdots d_i$ and its the number of vertices satisfies $|V(Q_i)|=n_i-|V(P_i)|+1$.
By Lemma \ref{Path-Cover Lemma}-(ii), the vertex pair $(c_i, d_i)$ has an absorber in the path $P_i$. Then the path $Q_i$ can be absorbed into $P_i$. Define $P^\prime_i=Q_i\cup P_i$ for each $i\in[l]$. Then, $P^\prime_i$ is a path of length exactly $n_i$.

\smallskip

At this stage, we have constructed a collection of paths $P_1^\prime, \ldots, P_l^\prime$ of lengths $n_1, \ldots, n_l$, respectively, and in $H^\prime$, there are the paths $P_{l+1}, \ldots, P_{k}$ of lengths $n_{l+1}, \ldots, n_k$, respectively. Together, these form a Hamiltonian $H$-linked subdigraph of $D$. This completes the proof of Theorem \ref{song2} for the case when $D$ is stable.

\subsection{Extremal case}
Let $H$ be a digraph with $k$ arcs and $\delta(H)\geq1$. In this subsection, we always assume:\\
(i) $C_0$ is a positive integer, and $\alpha_0, \beta_0 \in (0,1)$ are two real, as defined in Theorem \ref{song2},\\
(ii) $D$ is a digraph on $n\geq C_0k$ vertices with $\delta^0(D)\geq n/2+k$ and $D$ is not stable, and\\
(iii) $\alpha\in (0,\alpha_0]$ and $\beta\in (0,\beta_0]$, and parameters $\alpha, \beta, \varepsilon, \varepsilon_1$ and $\varepsilon^\prime$ are chosen such that $$0<1/C_0\ll\alpha, \beta \ll\varepsilon^\prime\ll\varepsilon_1\ll\varepsilon\ll1.$$
Let $\mathcal{N}=\{n_1, \ldots, n_k\}$ be an integer set where each $n_i\geq4$ for $i\in[k]$ and $\sum_{n_i<\alpha n}n_i\leq\beta n$. Clearly, $k\leq\frac{\alpha n}{5}+\frac{1}{\beta}$. We relabel the vertices in $f(V(H))$ as $f(V(H))=\cup_{i=1}^k \{v_i, v_{i}^\prime\}$ such that, in the desired Hamiltonian $H$-linked subdigraph, the length of the path from $v_i$ to $v_{i}^\prime$ is $n_i$.

\smallskip

We first define the \emph{strong neighbourhood} of a vertex $x$ in $D$ to be $SN(x)=\{y: xy, yx\in A(D)\}$, and the \emph{strong semi-degree} of $x$ in $D$, defined  $s(x)$, as the cardinality of $SN(x)$, i.e.,  $s(x)=|SN(x)|$. Also, for a vertex subset $U$ of $D$, let $s_U(x)=|SN(x)\cap U|$. Additionally, we introduce the following definitions, which will be frequently used in this section:
\begin{definition}\label{definition3.2}
\emph{Let $U_1$ and $U_2$ be two disjoint vertex subsets in $V(D)$, and let $u\in U_1$ (resp., $v\in U_2$). We define exceptional vertices of \emph{Types I$_1$-I$_4$}  with respect to $U_1$ and $U_2$, respectively, as follows. For each $u$ ($v$, respectively), we say that $u$ ($v$, respectively) is of\\
(i) \emph{Type I$_1$}, if, for some $\sigma\in\{+, -\}$,
$d_{U_1}^{\sigma}(u)\leq(1-\sqrt{10\varepsilon})|U_1|$ ($d_{U_2}^{\sigma}(v)\leq(1-\sqrt{10\varepsilon})|U_2|$, $~~~~~~$respectively).\\
(ii) \emph{Type I$_2$}, if, for some $\sigma\in\{+, -\}$,
$d_{U_1}^\sigma(u)\leq\varepsilon^{1/3}|U_1|$ ($d_{U_2}^\sigma(v)\leq\varepsilon^{1/3}|U_2|$, respectively).\\
(iii) \emph{Type I$_3$}, if $s_{U_{2}}(u)\leq(1-\sqrt{10\varepsilon})|U_{2}|$ ($s_{U_{1}}(v)\leq(1-\sqrt{10\varepsilon})|U_{1}|$, respectively).\\
(iv) \emph{Type I$_4$}, if $s_{U_{2}}(u)\leq\varepsilon^{1/3}|U_{2}|$ ($s_{U_{1}}(v)\leq\varepsilon^{1/3}|U_{1}|$, respectively).
}
\end{definition}
For each $i\in[4]$, we also use $E_i$ to represent the set of vertices of Type I$_i$ in $D$. It is clear that for every $i\in\{1, 3\}$, we have $E_{i+1}\subseteq E_{i}$.
\begin{definition}\label{definition3.3}
\emph{Let $U_1$ and $U_2$ be two disjoint vertex sets of $V(D)\setminus f(V(H))$. For any $j\in[2]$, define: \\
(i) $V_j$ as the set of vertex pairs $(v_{i}, v_{i}^\prime)$ such that
$$|N^+(v_{i})\cap U_j|\geq 4k\ \mbox{and}\ |N^-(v_{i}^\prime)\cap U_j|\geq 4k.$$\\
(ii) $V_{j+2}$ as the set of vertex pairs $(v_{i}, v_{i}^\prime)$ such that
$$|N^+(v_{i})\cap U_j|\geq 4k\ \mbox{and}\ |N^-(v_{i}^\prime)\cap U_{3-j}|\geq 4k.$$}
\end{definition}
We now present the following proposition and lemma, which are simple yet interesting and will be repeatedly used in the extremal cases.
\begin{proposition}\label{prop}
Suppose that $C$ is a positive integer and $\eta$ is any real number satisfying $1/C\ll\eta\ll1$. Consider an integer partition $a=a_1+ \cdots+a_k$ with $a\geq Ck$.
Let $T$ be a digraph with vertex set $V(T)=A\cup B$, where $A\cap B=\emptyset$ and $|A|=|B|=a$. Suppose that for any $\sigma\in\{+, -\}$, the following holds: for any vertex $u\in A$ and any $v\in B$, $d^\sigma_B(u)\geq(1-\eta)a$ and $d^\sigma_A(v)\geq(1-\eta)a$, respectively. Then for any vertex set $U\subseteq V(T)$ such that $U\cap A=\{x_1^0, \ldots, x_k^0\}$ and $U\cap B=\{y_1^0, \ldots, y_k^0\}$, the digraph $T$ contains $k$ disjoint paths $P_1, \ldots, P_k$ satisfying the following for each $j\in[k]$.\\
$(i)$ The initial and the terminal of $P_j$ is $x_j^0$ and $y_j^0$, respectively.\\
$(ii)$ $|V(P_j)\cap A|=|V(P_j)\cap B|=a_j$.
\end{proposition}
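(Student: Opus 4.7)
The plan is as follows. Without loss of generality assume $\sigma=+$ (the case $\sigma=-$ is symmetric upon reversing all arcs). Then every $u\in A$ has $|N^+(u)\cap B|\geq (1-\eta)a$ and every $v\in B$ has $|N^+(v)\cap A|\geq (1-\eta)a$. Since each $P_j$ must use $+$-arcs and have $a_j$ vertices in each of $A,B$, starting at $x_j^0\in A$ and ending at $y_j^0\in B$, the path must alternate $A,B,A,B,\ldots,A,B$. So the proposition reduces to: partition $A=A_1\cup\cdots\cup A_k$ and $B=B_1\cup\cdots\cup B_k$ with $x_j^0\in A_j$, $y_j^0\in B_j$, and $|A_j|=|B_j|=a_j$, and for each $j$ find an alternating Hamilton path of $T[A_j\cup B_j]$ from $x_j^0$ to $y_j^0$.

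Since $1/C\ll\eta$ and $a\geq Ck$, we have $\eta a\gg k$, which is the fuel for every step. I split the indices into \emph{small} ones (say $a_j<a^{2/3}$) and \emph{large} ones ($a_j\geq a^{2/3}$), and handle the small paths first. For each small $j$, I build $P_j$ directly by greedy extension: having chosen a partial alternating path $x_j^0=w_1,w_2,\ldots,w_t$, the next vertex must lie on the correct side of the bipartition, be an out-neighbor of $w_t$, and avoid previously used vertices. The forbidden set at any step has size at most (used so far)$+\eta a$, and since the total number of vertices used by all small paths is at most $\sum_{\text{small}}2a_j\le 2ka^{2/3}\ll a$ and $\eta a\ll a$, plenty of candidates remain. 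To end exactly at $y_j^0$, I peel off the last two arcs and finish with the connecting trick of Lemma \ref{s1}-style: at the $(2a_j-2)$-th vertex choose any out-neighbor $v\in A$ of both the previous vertex and an in-neighbor of $y_j^0$, which exists because the two ``forbidden'' sets each have size $\leq\eta a+O(\sum_{\text{small}}a_j)\ll a/2$, so their intersection is non-empty.

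After the small paths are built, assign the remaining free vertices of $A$ and $B$ arbitrarily to the large $A_j$'s and $B_j$'s so that $|A_j|=|B_j|=a_j$ for each large $j$. Inside each large $T[A_j\cup B_j]$, every vertex of $A_j$ has at least $a_j-\eta a\geq(1-\eta a^{1/3})a_j$ out-neighbors in $B_j$, and symmetrically, so the induced bipartite subdigraph is almost complete. In such a dense bipartite digraph, a Hamilton path from the prescribed $x_j^0$ to the prescribed $y_j^0$ is standard: one first produces a Hamilton cycle (by a bipartite Ghouila-Houri type argument, or by the rotation-extension method, or by invoking Theorem \ref{KK} after routinely checking that $T[A_j\cup B_j]$ is a robust outexpander), then modifies the cycle to obtain a Hamilton path with the two required endpoints by rerouting one arc, which is possible because the density allows many distinct rotations.

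The main obstacle is the small-$a_j$ bookkeeping: one has to verify that the greedy process for the small paths, performed sequentially, never gets stuck and leaves the remaining vertex sets balanced between the $A$- and $B$-sides (so that the large-path assignment can still be completed). I expect this to be routine provided one processes small paths in a fixed order, maintains a running count of used vertices on each side, and notes that at each extension step the excluded set has size $O(\eta a+ka^{2/3})$, which is negligible compared to the at-least-$(1-\eta)a-O(k a^{2/3})$ legitimate choices. The Hamilton-path sub-problem for the large pieces is by contrast essentially classical, because the induced bipartite digraphs are dense enough that any of the standard machinery already cited in the paper applies.
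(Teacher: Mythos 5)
Your approach is genuinely different from the paper's and considerably more elaborate. The paper handles all $k$ paths uniformly in one stroke: it fixes, for each $j$, the $A$-side vertices $x_j^0,\ldots,x_j^{a_j-1}$ with $x_j^{a_j-1}\in N^-_A(y_j^0)$, partitioning $A$, and builds an auxiliary bipartite graph $Q$ whose one side is the set of all consecutive pairs $(x_j^i,x_j^{i+1})$ and whose other side is $B\setminus U$, where the pair $(x_j^i,x_j^{i+1})$ is joined to $b$ whenever $x_j^i b$ and $bx_j^{i+1}$ are both arcs. The degree hypothesis makes $Q$ nearly complete with both sides of size $a-k$, so K\"{o}nig--Hall gives a perfect matching, which inserts a distinct $B$-vertex between each consecutive $A$-pair and produces all $k$ paths simultaneously. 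This avoids the small/large dichotomy and any Hamilton-path machinery entirely.

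Two concrete issues with your sketch. The opening ``WLOG $\sigma=+$'' misreads the hypothesis: the proposition assumes the degree bound for \emph{both} $\sigma=+$ and $\sigma=-$ at once, and you in fact use both later (the connecting step needs $N^-_A(y_j^0)$ to be large, and the large-piece Hamiltonicity certainly needs in-degrees). With only out-degrees bounded the statement is false: once $\eta a\geq 1$ one can arrange that some $y_j^0$ has in-degree zero from $A$ while every out-degree stays above $(1-\eta)a$, and then no path can end at $y_j^0$. Separately, invoking Theorem~\ref{KK} on $T[A_j\cup B_j]$ cannot work: a bipartite digraph is never a robust outexpander, since taking $S=A_j$ gives $RN^+_{\nu}(S)\subseteq B_j$ with $|B_j|=|S|$, so $|RN^+_{\nu}(S)|<|S|+\nu|V|$. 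Your remaining alternatives (bipartite Ghouila--Houri or rotation--extension) are morally fine but do not hand you a Hamilton path with \emph{prescribed} endpoints in a bipartite digraph off the shelf, so an argument is still needed there---and the cleanest such argument is precisely the paper's K\"{o}nig--Hall reduction applied to that one piece, at which point it is simpler to apply it globally and drop the small/large split altogether.
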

\begin{proof}
For convenience, let $r_j=a_j-1$ for any $j\in[k]$.
For each $j\in[k]$, we choose $r_j+1$ vertices $x_j^0, x_j^1, \ldots, x_j^{r_j}$ from $A$ with the last vertex $x_j^{r_j} \in N^-_A(y_j^0)$ such that all selected vertices are distinct and their union covers $A$. We construct an auxiliary bipartite graph $Q=(\tilde{A}, B^\prime)$ such that $\tilde{A}=\bigcup_{j=1}^k\{(x_j^0, x_j^1), (x_j^1, x_j^2), \ldots, (x_j^{r_j-1}, x_j^{r_j})\}$ and $B^\prime=B\setminus U$ where each `vertex' $(x_j^i, x_j^{i+1})$ in $\tilde{A}$ connects with all the vertices in $N_{T}^+(x_j^i)\cap N_{T}^-(x_j^{i+1})$.
Obviously, any perfect matching in $Q$ that saturates $\tilde{A}$ corresponds to an embedding of $P_1, \ldots, P_k$ in $T$ as required. We claim that such perfect matching exists. In fact,  $|\tilde{A}|=|B^{\prime}|=a-k$ and $d_Q(z)\geq2(1-\eta)|B^\prime|-|B^\prime|\geq(1-2\eta)|B^\prime|$ for $z\in \tilde{A}$. Additionally, we deduce that $d_Q(u)\geq(1-2\eta)|\tilde{A}|$ for any vertex $u\in B^\prime$. Therefore, the degrees of the vertices in $Q$ are all at least $(1-2\eta)(a-k)$. Then by the K\"{o}nig-Hall's theorem, we conclude that $Q$ has a perfect matching. This matching corresponds to the desired paths $P_1, \ldots, P_k$ in $T$, completing the proof of this proposition.
\end{proof}
For any two vertex subsets $X$ and $Y$ of $V(D)$ and a parameter $0<\varepsilon\ll1$, we say $X$ is \emph{$\varepsilon$-approximately equal to} $Y$ if $|X|=|Y|\pm \varepsilon n$.
We now define the following extremal case, which occurs when $D$ satisfies the extremal condition (\textbf{EC}).


%
%
%
\begin{definition}\label{ec}
\emph{(\textbf{Extremal Case $\mathbf{1}$ (EC1) with parameter $\varepsilon$})}
\emph{The vertex set $V(D)$ can be partitioned into four disjoint vertex sets $W_1, W_2, W_3$ and $W_4$ 
such that $|W_1|=|W_3|\pm \varepsilon n$, and $|W_2|=|W_4|\pm \varepsilon n$. Furthermore, the following conditions hold. }
\begin{itemize}
\item[$(A)$]
\emph{\textbf{Almost one-way completeness:} For each $i\in[4]$, $e^+(W_i, W_{i+1})\geq|W_i|\cdot|W_{i+1}|-\varepsilon n^2$, where $W_5=W_1$. In particular, we also say that $D[W_i\cup W_{i+1}]$ is $\varepsilon$-\emph{almost one-way complete} for each $i\in [4]$ and $W_5=W_1$.}
\item[$(B)$]
\emph{\textbf{Almost completeness:} For each $i\in\{1, 3\}$, $e(W_i)\geq|W_i|^2-\varepsilon n^2$. In this case, we also say that $D[W_i]$ is $\varepsilon$-\emph{almost complete}.}
\item[$(C)$]
\emph{\textbf{Almost complete bipartite:} For each $i\in\{2, 4\}$, $e^+(W_i, W_{i+2})\geq|W_i|\cdot|W_{i+2}|-\varepsilon n^2$, where $W_6=W_2$. In this case, we say that $(W_2, W_4)$ is a $\varepsilon$-\emph{almost complete bipartite} pair.}
\end{itemize}
\end{definition}

Based on the extremal condition (\textbf{EC}) and the definition of \textbf{EC1}, we can use traditional structural analysis methods to effectively establish the following result.
\begin{lemma}\label{claim1}
Suppose constants $k, C_0>0$, and parameters $\varepsilon^\prime, \varepsilon$ satisfy $1/C_0\ll\varepsilon^\prime \ll\varepsilon\ll1$. If $D$ is a digraph of order $n\geq C_0k$ with $\delta^0(D)\geq n/2+k$, and satisfies the extremal condition \emph{(}\textbf{EC}\emph{)} with parameter $\varepsilon'$, then $D$ belongs to \textbf{EC1} with parameter $\varepsilon$.
\end{lemma}
\begin{proof}
Since $D$ satisfies \textbf{EC}, there exist two (not necessarily disjoint) vertex sets $U_1$ and $U_2$ with $|U_i|\geq(1/2-\varepsilon^\prime)n$ for every $i\in[2]$, and $e^+(U_1, U_2)\leq(\varepsilon^\prime n)^2$. For convenience, let $U_0:=U_1\cap U_2$. We consider the case by case based on the cardinality of $U_0$.

\smallskip

Choose a new parameter $\varepsilon_1$ such that $1/C_0\ll\varepsilon^\prime\ll \varepsilon_1\ll \varepsilon\ll1$.

\smallskip

\noindent\textbf{Case 1. $|U_0|\leq\varepsilon_1 n$.} 

We first define
$W_1=U_1\setminus U_0$, $W_3=U_2\setminus U_0$, and $W_2=W_4=\emptyset$. Clearly $W_1$ and $W_3$ are disjoint, and $e^+(W_1, W_3)\leq e^+(U_1, U_2)\leq(\varepsilon^\prime n)^2$. Additionally, for every $i\in\{1, 3\}$, since $\varepsilon^\prime\ll\varepsilon_1\ll\varepsilon$, we have: $$|W_1|=|U_1\setminus(U_1\cap U_2)|\geq(1/2-\varepsilon^\prime-\varepsilon_1)n\geq(1/2-\varepsilon/2)n,$$ and similarly, $|W_3|\geq(1/2-\varepsilon/2)n$.
Further, together with $\delta^0(D)\geq n/2+k$, $|W_1|\leq(1/2+\varepsilon/2)n$, $|V(D)\setminus(W_1\cup W_3)|\leq 2\varepsilon n$, $e^+(W_1, \overline{W_1})=e^+(W_1, W_3)+e^+(W_1, V(D)\setminus(W_1\cup W_3))$, $1/C_0\ll\varepsilon^\prime\ll\varepsilon_1\ll\varepsilon$ and $k\leq n/C_0\leq \varepsilon^\prime n$, we can deduce that
\begin{align}\label{eqq}
e(W_1)\geq\sum_{u\in W_1}d^{+}(u)-e^+(W_1, \overline{W_1})&\geq|W_1|\cdot(n/2+k)-(\varepsilon^\prime n)^2-(1/2+\varepsilon/2)n\cdot2\varepsilon n\nonumber\\
&\geq|W_1|^2-\varepsilon n^2.
\end{align}
Following the same calculation as in $(\ref{eqq})$, we can sum the in-degrees of vertices in $W_3$ to obtain that $$e(W_3)\geq|W_3|^2-\varepsilon n^2.$$ Therefore, $D[W_1]$ and $D[W_3]$ are $\varepsilon$-almost complete. It is easy to see that $|W_1|=|W_3|\pm \varepsilon n$, and
thus by splitting $V(D)\backslash (W_1\cup W_3)$ into arbitrary parts $W_2$ and $W_4$, we obtain that $D$ satisfies $(A)$-$(C)$, and thus the conclusion holds (see Figure \ref{fig1} $(a)$).

\begin{figure}[h]
\centering
\scriptsize
\begin{tabular}{ccc}\label{7}
\includegraphics[width=4.5cm]{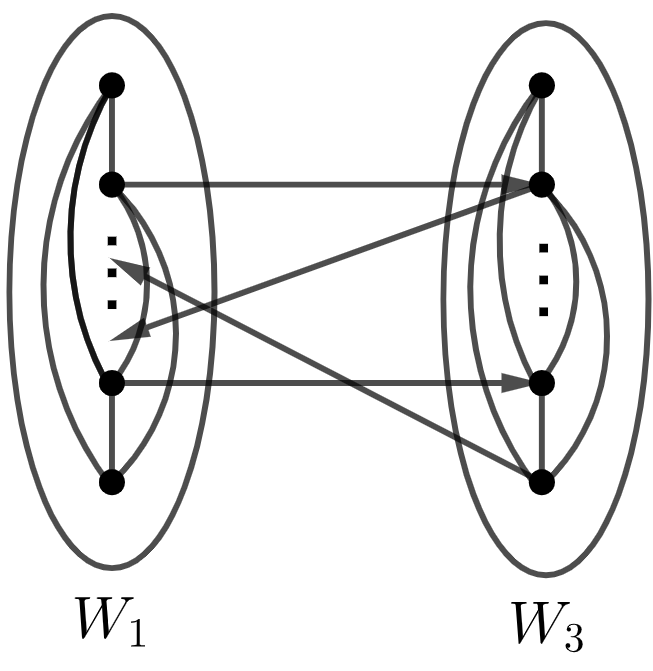}&\includegraphics[width=4.6cm]{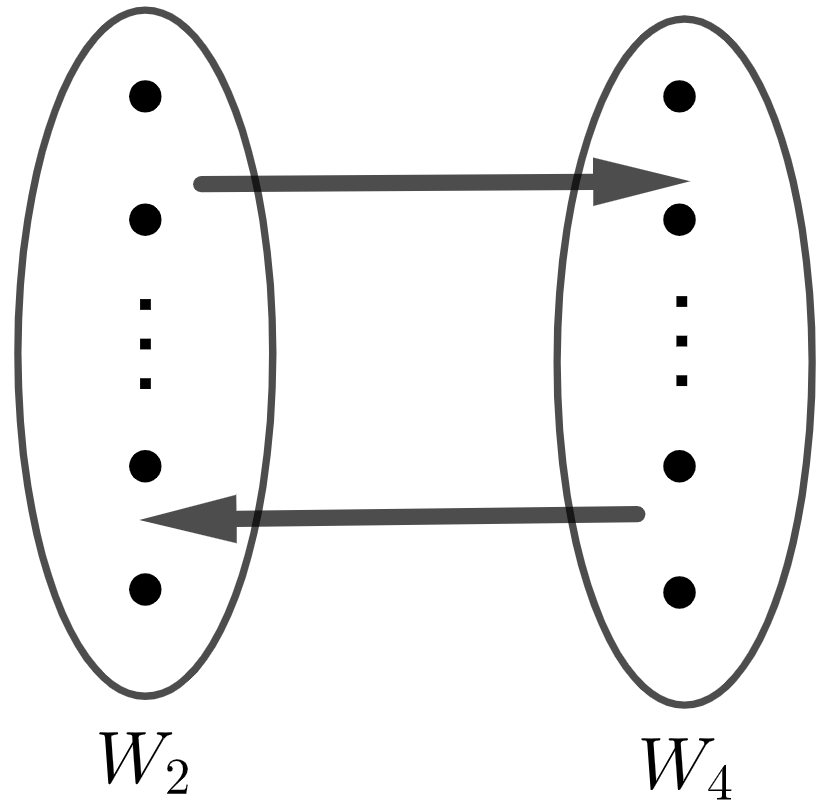}&\includegraphics[width=4.9cm]{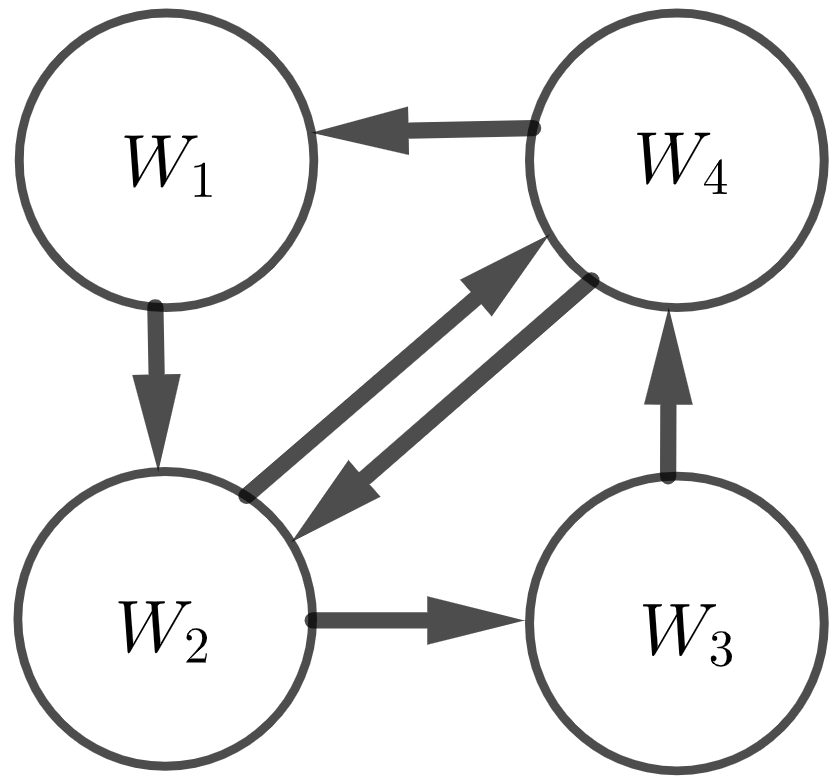}\\
(a) The case when $|U_0|\leq\varepsilon_1 n$. & (b) The case of $|U_0|\geq(1/2-\varepsilon_1)n$. & (c) The case of $\varepsilon_1n<|U_0|<(1/2-\varepsilon_1)n$.
\end{tabular}
\caption{In this figure, an edge without direction between two vertices indicates a $2$-cycle, and a black arrow pointing between two vertex sets indicates that the reduced digraph formed by them is $\varepsilon$-almost one-way complete.}
\label{fig1}
\vspace{-0.5em}
\end{figure}

\smallskip

\noindent \textbf{Case 2. $|U_0|\geq(1/2-\varepsilon_1)n$.}

Without loss of generality, we assume that $|U_0|=(1/2-\varepsilon_1)n$.
 Let $W_2=U_0$, $W_4=V(D)\setminus U_0$, and $W_1=W_3=\emptyset$.
It is evident that $e(W_2)\leq e(U_0)\leq e^+(U_1, U_2)\leq(\varepsilon^\prime n)^2$. Combining with $\delta^0(D)\geq n/2+k$ and $|W_2|=(1/2-\varepsilon_1)n$, we conclude that
\begin{align}\label{eqq1}
e^+(W_2, W_4)&\geq|W_2|\cdot(n/2+k)-(\varepsilon^\prime n)^2\nonumber\\
&=|W_2|\cdot(1/2+\varepsilon_1)n+k|W_2|-|W_2|\cdot\varepsilon_1n-(\varepsilon^\prime n)^2\nonumber\\
&\geq|W_2|\cdot|W_4|+k|W_2|-(\varepsilon_1+(\varepsilon^\prime)^2)n^2\nonumber\\
&\geq|W_2|\cdot|W_4|-\varepsilon n^2
\end{align}
by $k\leq n/C_0\leq \varepsilon^\prime n$ and $\varepsilon^\prime\ll\varepsilon_1\ll\varepsilon\ll1$.
Similar to $(\ref{eqq1})$, by calculating the sum of in-degrees of vertices in $W_2$, we can also obtain that $$e^+(W_4, W_2)\geq|W_2|\cdot|W_4|-\varepsilon n^2.$$ It follows that $|W_2|=|W_4|\pm \varepsilon n$, and so $(W_2, W_4)$ is $\varepsilon$-almost complete bipartite. Hence, according to the definition of \textbf{EC1}, we conclude $D$ belongs to \textbf{EC1} (see Figure \ref{fig1} $(b)$).

\smallskip

\noindent\textbf{Case 3. $\varepsilon_1n<|U_0|<(1/2-\varepsilon_1)n$.}

Let $W_1=U_1\setminus U_0$, $W_2=V(D)\setminus(U_1\cup U_2)$, $W_3=U_2\setminus U_0$ and $W_4=U_0$. We first estimate the cardinalities of $W_1$, $W_2$, $W_3$ and $W_4$. The following conclusion holds.
\begin{claim}\label{ccclaim}
For each $j\in\{1, 3\}$, $(1/2-\varepsilon^\prime/2)n+k\leq|W_j|+|W_2|\leq(1/2+\varepsilon^\prime)n$.
\end{claim}
\begin{proof}
On the one hand, since $e(W_4)+
e^+(W_4, W_3)=e(U_0)+
e^+(U_0, U_2\setminus U_0)\leq e^+(U_1, U_2)\leq(\varepsilon^\prime n)^2$, by calculating the out-degrees of vertices in $W_4$, we have that
\begin{align*}
(n/2+k)\cdot|W_4|\leq \sum_{w\in W_4}d^{+}(w)&=e^+(W_4, W_1)+e(W_4)+
e^+(W_4, W_3)+e^+(W_4, W_2) \nonumber \\
&\leq e^+(W_4, W_1)+(\varepsilon^\prime n)^2+e^+(W_4, W_2) \nonumber \\
&\leq|W_4|\cdot|W_1|+(\varepsilon^\prime n)^2+|W_4|\cdot|W_2|.
\end{align*}
Since $|W_4|=|U_0|>\varepsilon_1 n$ and $\varepsilon^\prime\ll \varepsilon_1$, this implies that $$(1/2-\varepsilon^\prime/2)n+k\leq|W_1|+|W_2|.$$ Similarly, by calculating the in-degrees of vertices of $W_4$, we also obtain that
\begin{align*}
(n/2+k)\cdot|W_4|\leq \sum_{w\in W_4}d^{-}(w)&=e^-(W_4, W_1)+e(W_4)+
e^-(W_4, W_3)+e^-(W_4, W_2) \nonumber \\
&\leq (\varepsilon^\prime n)^2+e^-(W_4, W_3)+e^-(W_4, W_2) \nonumber \\
&\leq (\varepsilon^\prime n)^2+|W_4|\cdot|W_3|+|W_4|\cdot|W_2|.
\end{align*}
Thus we get that $$(1/2-\varepsilon^\prime/2)n+k\leq|W_2|+|W_3|.$$

On the other hand, due to $|U_i|\geq(1/2-\varepsilon^\prime)n$ for each $i\in[2]$, it can be deduced that $$|W_1|+|W_2|=|V(D)\setminus U_2|\leq(1/2+\varepsilon^\prime)n\ \mbox{and}\ |W_2|+|W_3|=|V(D)\setminus U_1|\leq(1/2+\varepsilon^\prime)n.$$
Hence, the claim holds.
\end{proof}
In the following, we will first prove that $|W_1|$ is $\varepsilon^\prime$-approximately equal to $|W_3|$, and similarly, $|W_2|$ is $\varepsilon^\prime$-approximately equal to $|W_4|$. This means that the sizes of $|W_1|$ and
$|W_3|$ differ by at most $O(\varepsilon^\prime n)$, and the sizes of $|W_2|$ and $|W_4|$ also differ by at most $O(\varepsilon^\prime n)$. These approximations are crucial for understanding the balance between the sizes of the vertex subsets $W_1$, $W_2$, $W_3$ and $W_4$ in the digraph $D$. We will now proceed to establish these relationships formally.
\begin{claim}\label{szw1}
$-3\varepsilon^\prime n/2+k\leq|W_1|-|W_3|\leq3\varepsilon^\prime n/2-k$ and $-\varepsilon^\prime n+2k\leq|W_2|-|W_4|\leq2\varepsilon^\prime n$.
\end{claim}
\begin{proof}
By Claim \ref{ccclaim}, we have for each $j\in\{1, 3\}$,
\begin{align*}
(1-\varepsilon^\prime)n/2+k-|W_2|\leq|W_j|\leq(1/2+\varepsilon^\prime)n-|W_2|.
\end{align*}
Hence, we deduce:
\begin{align}\label{yyyy1}
-3\varepsilon^\prime n/2+k\leq|W_1|-|W_3|\leq3\varepsilon^\prime n/2-k.
\end{align}
Also, by Claim \ref{ccclaim} again, we have that $(1/2-\varepsilon^\prime/2)n+k-|W_3|\leq|W_2|\leq(1/2+\varepsilon^\prime)n-|W_3|$, and
$(1/2-\varepsilon^\prime/2)n+k+|W_3|\leq|W_1|+|W_2|+|W_3|\leq (1/2+\varepsilon^\prime)n+|W_3|.$
Together with $n=|W_1|+|W_2|+|W_3|+|W_4|$, this suggests that
\begin{align*}
(1/2-\varepsilon^\prime)n-|W_3|\leq|W_4|\leq(1+\varepsilon^\prime)n/2-k-|W_3|.
\end{align*}
Hence, by claim 3.15, we obtain that
\begin{align}\label{yyyy2}
-\varepsilon^\prime n+2k\leq|W_2|-|W_4|\leq2\varepsilon^\prime n.
\end{align}
Inequalities (\ref{yyyy1}) and (\ref{yyyy2}) imply that $|W_1|$ $\varepsilon^\prime$-approximately equals $|W_3|$, and $|W_2|$ is $\varepsilon^\prime$-approximately equal to $|W_4|$; meaning their sizes differ by at most $O(\varepsilon^\prime n)$.
\end{proof}
We then estimate the cardinality of the vertex set $W_i$ for each $i\in[4]$. The following claim holds.
\begin{claim}\label{szw2} We declare that the statements hold as follows.\\
$(i)$ $\varepsilon_1n/2<|W_j|<(1/2-3\varepsilon_1/4)n\ \mbox{for}\ j\in\{1, 3\}.$\\
$(ii)$ $\varepsilon_1 n/2<|W_i|<(1/2-\varepsilon_1/4)n,\ \mbox{for each}\ i\in\{2, 4\}.$
\end{claim}
\begin{proof}
We first prove (i). Since $|W_4|=|U_0|$ and $\varepsilon_1n<|U_0|<(1/2-\varepsilon_1)n$, it is not hard to get that
\begin{align}\label{szw3}
\varepsilon_1n/2<|W_4|<(1/2-\varepsilon_1/4)n. \end{align}
In the following, we estimate the upper and lower bounds of $|W_j|$ for $j\in\{1, 3\}$. Since $|U_1|, |U_2| \geq(1/2-\varepsilon^\prime)n$ and $\varepsilon^\prime\ll\varepsilon_1$, we have: $$|W_j|\geq(1/2-\varepsilon^\prime)n-|W_4|>(1/2-\varepsilon^\prime)n-(1/2-\varepsilon_1)n\geq\varepsilon_1n/2.$$ Also by Claim \ref{ccclaim}, $V(D)=W_1\cup W_2\cup W_3\cup W_4$ and $|W_4|=|U_0|>\varepsilon_1n$, we can deduce:
\begin{align}\label{aaal}
|W_j|&=|V(D)|-(|W_{j+2}|+|W_2|)-|W_4|\nonumber\\
&< n-((1/2-\varepsilon^\prime/2)n+k)-\varepsilon_1 n\leq(1/2-3\varepsilon_1/4)n,
\end{align}
where the subscript of $W_{j+2}$ is taken modulo $4$. Combining these results, we conclude:
\begin{align}\label{W1}
\varepsilon_1n/2<|W_j|<(1/2-3\varepsilon_1/4)n\ \mbox{for}\ j\in\{1, 3\}.
\end{align}

\smallskip

Next, we prove (ii). We now estimate the upper and lower bounds of $|W_2|$. On the one hand, since $|U_1|\geq(1/2-\varepsilon^\prime)n$, $|W_2|=|V(D)|-(|W_1|+|W_3|+|W_4|)=|V(D)|-|U_1\cup U_2|$, we have that
\begin{align*}\label{eqqq1}
|W_2|=n-(|U_1|+|U_{2}\setminus U_0|)
<n-((1/2-\varepsilon^\prime)n+\varepsilon_1n/2)<(1/2-\varepsilon_1/4)n.
\end{align*}
On the other hand, by Claim \ref{ccclaim} again and (\ref{aaal}), we obtain that
\begin{align*}
|W_2|\geq(1/2-\varepsilon^\prime/2)n+k-|W_1|>(1/2-\varepsilon^\prime/2)n+k-(1/2-3\varepsilon_1/4)n\geq\varepsilon_1 n/2.
\end{align*}
Together with (\ref{szw3}), we conclude that
\begin{equation}\label{W2}
\begin{split}
\varepsilon_1 n/2<|W_i|<(1/2-\varepsilon_1/4)n,\ \mbox{for each}\ i\in\{2, 4\}.
\end{split}
\end{equation}
Therefore, the claim is proven.
\end{proof}
In what follows, we will prove that vertex sets $W_1$, $W_2$, $W_3$ and $W_4$ satisfy properties $(A)$, $(B)$ and $(C)$ of \textbf{EC1}. Firstly, it follows from $e^+(U_1, U_2)\leq(\varepsilon^\prime n)^2$ that, for each $j\in\{1, 4\}$, $e^+(W_j, W_3\cup W_4)\leq e^+(U_1, U_2)\leq(\varepsilon^\prime n)^2$. By Claim \ref{ccclaim}, $\delta^0(D)\geq n/2+k$ and $\varepsilon^\prime\ll \varepsilon$, this implies that for $j\in \{1,4\}$
\begin{align}\label{W3}
e^+(W_j, W_1\cup W_2)&\geq|W_j|\cdot(n/2+k)-(\varepsilon^\prime n)^2\nonumber\\
&=|W_j|\cdot(1/2+\varepsilon^\prime)n+|W_1|\cdot(k-\varepsilon^\prime n)-(\varepsilon^\prime n)^2\nonumber\\
&\geq|W_j|\cdot(|W_1|+|W_2|)-2\varepsilon^\prime n^2.
\end{align}

\smallskip

\noindent Secondly, by Claim \ref{ccclaim}, and since for each $j\in\{3, 4\}$, $e^+(W_1\cup W_4, W_j)\leq e^+(U_1, U_2)\leq(\varepsilon^\prime n)^2$, we get that
\begin{align}\label{W5}
e^+(W_2\cup W_3, W_j)&\geq(n/2+k)\cdot|W_j|-(\varepsilon^\prime n)^2\nonumber\\
&=(1/2+\varepsilon^\prime)n\cdot|W_j|+(k-\varepsilon^\prime n)\cdot|W_3|-(\varepsilon^\prime n)^2\nonumber\\
&\geq(|W_2|+|W_3|)\cdot|W_j|-2\varepsilon^\prime n^2.
\end{align}
Inequality equations (\ref{W3}) and (\ref{W5}) suggest that the vertex sets $W_1$, $W_2$, $W_3$ and $W_4$ of $D$ satisfy properties $(A)$, $(B)$ and $(C)$ of \textbf{EC1}. Together with Claims \ref{szw1} and \ref{szw2}, and by $\varepsilon^\prime\ll\varepsilon\ll1$, we have shown that $D$ belongs to \textbf{EC1} (see Figure \ref{fig1} $(c)$).

Hence the proof of the lemma is completed.
\end{proof}
\emph{Remark.} We can get from the proof of Lemma \ref{claim1} that if $|U_1\cap U_2|\leq\varepsilon_1n$, where vertex sets $U_1$ and $U_2$ satisfy the extrenal condition \textbf{EC} with parameter $\varepsilon^\prime$, then $|W_1|, |W_3|\geq(1/2-\varepsilon/2)n$ and $e(W_i)\geq|W_i|^2-\varepsilon n^2$ for each $i\in\{1, 3\}$; if $|U_1\cap U_2|\geq(1/2-\varepsilon_1)n$, then $|W_2|=(1/2-\varepsilon_1)n$ and $|W_4|=(1/2+\varepsilon_1)n$ and $e^+(W_2, W_{4}), e^+(W_4, W_{2}) \geq|W_2|\cdot|W_{4}|-\varepsilon n^2$; otherwise, that is, $\varepsilon_1n<|U_1\cap U_2|<(1/2-\varepsilon_1)n$, then $\frac{\varepsilon_1n}{2}\leq|W_i|\leq(\frac{1}{2}-\frac{\varepsilon_1}{4})n$ for each $i\in[4]$ and $e^+(W_i, W_{i+1})\geq|W_i|\cdot|W_{i+1}|-\frac{\varepsilon^\prime n}{2}$, where $W_5=W_1$.

\medskip

By Lemma \ref{claim1}, we know that $D$ belongs to \textbf{EC1} if it is not stable. In the following, we provide the proof of Theorem \ref{song2} based on the case when $D$ belongs to \textbf{EC1}. 
Recall that we have $\delta^0(D)\geq n/2+k$, and our goal is to show that $D$ is $(\alpha, \beta)$-arbitrary
Hamiltonian $H$-linked. In the rest of this paper, we also use the vertex set to represent the subgraph induced by it for simplicity.
\begin{lemma}\label{case}
Let $H$ be a digraph with $k$ arcs and $\delta(H)\geq1$. Let $C_0$ be a constant, and parameters $\alpha, \beta$ satisfy $1/C_0\ll\alpha, \beta\ll1$. If $D$ is a digraph of order $n\geq C_0k$ with $\delta^0(D)\geq n/2+k$ and belongs to \textbf{EC1}, then $D$ is $(\alpha, \beta)$-arbitrary Hamiltonian $H$-linked.
\end{lemma}
\begin{proof}
%
%
Let $F=f(V(H))$ for brevity, and define $W_i^\prime=W_i\setminus F$ for each $i\in[4]$.
From Definition \ref{ec} and Lemma \ref{claim1}, we have that $|W_1^\prime|$ is $\varepsilon^\prime$-approximately equal to $|W_3^\prime|$, and similarly, $|W_2^\prime|$ is $\varepsilon^\prime$-approximately equal to $|W_4^\prime|$. Using properties $(A)$-$(C)$ of \textbf{EC1}, we observe that $(W_2^\prime, W_4^\prime)$ forms an $\varepsilon$-almost complete bipartite, and $D[W_i^\prime]$ (for $i\in\{1, 3\}$) is $\varepsilon$-almost complete. In what follows, we proceed by case analysis on the cardinality of $U_1\cap U_2$, where $U_1$ and $U_2$ satisfy the extremal condition \textbf{EC} with parameter $\varepsilon^\prime$.
\begin{Case}\label{case3.1}
$|U_1\cap U_2|\leq\varepsilon_1n$.
\end{Case}
In this case, we complete the proof of Lemma \ref{case} in three steps:

\smallskip

\noindent\textbf{Step 1.} For every vertex pair $(v_i, v_i^\prime)$ ($i\in[k]$), show there exists $j\in\{1, 3\}$ such that

$\bullet$ a $(v_i, W_j^\prime)$-path and a $(W_j^\prime, v_i^\prime)$-path exist (Claim \ref{ccccl});

\smallskip

\noindent\textbf{Step 2.} For $j\in\{1, 3\}$, use disjoint $W_1$-paths and $W_3$-paths to cover low-semi-degree vertices in $W_j^\prime$ (Claim \ref{S}), which serve as subpaths for $(v_i, v_i^\prime)$-paths ($i\in[k]$);

\smallskip

\noindent\textbf{Step 3.} In the subdigraph induced by tne remaining of $W_j^\prime$ where $j\in\{1, 3\}$, apply Proposition \ref{prop} to construct disjoint paths of desired lengths. Combine these with paths from Steps 1-2 to form the final $(v_i, v_i^\prime)$-paths for all $i\in[k]$.
\begin{proof}
Before proceeding to \textbf{Step 1}, we do some preparatory work. Let $R=V(D)\setminus(F\cup W_1^\prime\cup W_3^\prime)$. By Definition \ref{definition3.2} with $(U_1, U_2)_{\ref{definition3.2}}=(W_1^\prime, W_3^\prime)$, and \textbf{EC1}, we get that for each $i\in\{1, 3\}$,
$$e(W_i^\prime)\geq|W_i^\prime|^2-3\varepsilon n^2\Rightarrow |E_2\cap W_i^\prime|\leq|E_1\cap W_i^\prime|\leq\sqrt{10\varepsilon}|W_i^\prime|.$$ Further, if there exists a vertex $x\in E_2\cap W_1^\prime\cup R$ (resp., a vertex $y\in E_2\cap W_3^\prime \cup R$) such that for each $\sigma\in\{+, -\}$, $d_{W_3^\prime}^\sigma(x)>\varepsilon^{1/3}|W_3^\prime|)$ (resp., $d_{W_1^\prime}^\sigma(y)>\varepsilon^{1/3}|W_1^\prime|)$, then we move $x$ (resp., $y$) into the vertex set $W_3^\prime$ (resp., $W_1^\prime$) and update the vertex sets $W_3^\prime$ and $W_1^\prime$. We repeat the above operation until there are no such vertices $x$ and $y$. Note that this process is guaranteed to terminate after a finite number of steps, because both $|E_2\cap W_1^\prime\cup R|$ and $|E_2\cap W_3^\prime \cup R|$ are very small.

\smallskip

Next, define $S_1=W_1^\prime\setminus E_2$, $S_2=W_3^\prime\setminus E_2$, and $S_3=V(D)\setminus(F\cup S_1\cup S_2)$. Clearly, $|S_1|, |S_2|\geq(1/2-\varepsilon/2-\sqrt{10\varepsilon})n$. Using the lower bound of $\delta^0(D)$ and the definitions of $S_1$, $S_2$ and $S_3$, it is straightforward to verify the following properties.
\begin{itemize}\item[$(\mathcal{A}1)$] \emph{For each $i\in[2]$, there exists a subset $S_i^\prime\subseteq S_i$ with $|S_i^\prime|\leq 10\sqrt{\varepsilon}|S_i|$ such that \\
$\bullet$ for every vertex $v\in S_i\setminus S_i^\prime$, $\delta^0_{S_i}(v)\geq (1-10\sqrt{\varepsilon})|S_i|$, and\\
$\bullet$ for every vertex $v\in S_i^\prime$, $\delta^0_{S_i}(v)\geq\frac{\varepsilon^{1/3}|S_i|}{2}$.}
\end{itemize}
\begin{itemize}\item[$(\mathcal{A}2)$] \emph{Furthermore,  for every vertex $v\in S_3$, one of the following holds: either $d^-_{S_1}(v), d^+_{S_2}(v)\\ >\frac{(1-2\varepsilon^{1/3})n}{2}$, or $d^+_{S_1}(v), d^-_{S_2}(v)>\frac{(1-2\varepsilon^{1/3})n}{2}$. Also, $\delta^0_{S_i}(v)\leq\frac{\varepsilon^{1/3}n}{2}$ for each $i\in[2]$.}
\end{itemize}

Sequentially, by the semi-degree condition of $D$ and the cardinalities of $W_1^\prime$ and $W_3^\prime$, we deduce that for any vertex $v_i\in F$, it is connected to and from many vertices in $S_1\cup S_2$. Furthermore, by Definition \ref{definition3.3} with $(U_1, U_2)_{\ref{definition3.3}}=(S_1, S_2)$, it is clear that each pair $(v_i, v_{i}^\prime)$ ($i\in[k]$) belongs to $V_j$ for some $j\in[4]$. Therefore, we now construct a partition $(V_1^\prime, V_2^\prime, V_3^\prime, V_4^\prime)$ of pairs $(v_i, v_{i}^\prime)$ $(i\in[k])$ with $V_i^\prime\subseteq V_i$ for $i\in[4]$, such that

\emph{(\uppercase\expandafter{\romannumeral1}) $|V_1^\prime\cup V_2^\prime|$ is as large as possible, and }

\emph{(\uppercase\expandafter{\romannumeral2}) subject to (\uppercase\expandafter{\romannumeral1}), the quantity $|\sum_{(v_i, v_{i}^\prime)\in V_1^\prime}n_i-\sum_{(v_j, v_{j}^\prime)\in V_2^\prime}n_j|$ is as small as possible.}

\smallskip

\noindent For convenience, we label the partition as follows:
\begin{equation*}
\begin{split}
&V_1^\prime=\{(v_1, v_{1}^\prime), \ldots, (v_{l_1}, v_{l_1}^\prime)\},\\
&V_2^\prime=\{(v_{l_1+1}, v_{l_1+1}^\prime), \ldots, (v_{l_1+l_2}, v_{l_1+l_2}^\prime)\},\\
&V_3^\prime=\{(v_{l_1+1_2+1}, v_{l_1+l_2+1}^\prime), \ldots, (v_{l_1+l_2+l_3}, v_{l_1+l_2+l_3}^\prime)\},\\
&V_4^\prime=\{(v_{l_1+l_2+l_3+1}, v_{l_1+l_2+l_3+1}^\prime), \ldots, (v_{l_1+l_2+l_3+l_4}, v_{l_1+l_2+l_3+l_4}^\prime)\}.\\
\end{split}
\end{equation*}
Here, $|V_i^\prime|=l_i$ for each $i\in[4]$, and $l_1+l_2+l_3+l_4=k$. Further, we define the following threshold conditions $(a)$ and $(b)$:
\begin{equation}\label{ab}
\begin{split}
(a)\ |S_1|<n_1+\cdots+n_{l_1};\ (b)\ |S_2|<n_{l_1+1}+\cdots +n_{l_1+l_2}.
\end{split}
\end{equation}
These conditions will be used in the subsequent steps of the proof. We now proceed to \textbf{Step 1} with the following key assertion.
\begin{claim}\label{ccccl}
For every vertex pair $(v_i, v_{i}^\prime)$ \emph{(}$i\in[k]$\emph{)}, there exists a set $S_j$ with $j\in[2]$ and two disjoint paths of length at most $4$, such that one path is from $v_i$ to $S_j$ and the other is from $S_j$ to $v_{i}^\prime$.
\end{claim}
\begin{proof}
We prove this claim by considering two subcases as follows:
\begin{Subcase}\label{subcase1}
At least one of conditions $(a)$ and $(b)$ in $(\ref{ab}) $ is true.
\end{Subcase}
In this subcase, we only give the proof of the case when $(a)$ holds but $(b)$ does not hold, since we can similarly solve the case when $(b)$ is true but $(a)$ is not true (the symmetric case), and the case that both $(a)$ and $(b)$ hold. So, we omit their proofs.

\smallskip

Without loss of generality, we can assume that $n_1\geq n_2\geq\cdots\geq n_{l_1}$. Let $i_0\in[l_1]$ be the minimal index satisfying:
$$n_{i_0+1}+\cdots+n_{l_1}\leq|S_1|< n_{i_0}+\cdots+ n_{l_1}.$$ Define $V_1^{\prime\prime}=V_1^\prime\setminus\{(v_1, v_{1}^\prime), \ldots, (v_{i_0}, v_{i_0}^\prime)\}$. Then, we have the key observations:

(i) For each vertex pair $(v_i, v_i^\prime)$ in $V_1^{\prime\prime}$, there are two disjoint arcs: one is from $v_i$ to $S_1$ and the other is from $S_1$ to $v_i^\prime$;

(ii) For every vertex pair $(v_i, v_i^\prime)$ in $V_3^\prime$, there is an arc from $S_2$ to $v_i^\prime$;

(iii) For each $(v_i, v_i^\prime)$ in  $V_4^\prime$, there exists an arc from $v_i$ to $S_2$.

\noindent Further, we affirm the following conclusions.

\smallskip

\emph{\textbf{\emph{(}3.1.1\emph{)}} We can construct a set $\mathcal{P}_1$ of disjoint minimal paths of length at most $4$ in $D$ with $|\mathcal{P}_1|=2i_0+l_3+l_4$ and $|V(P)\cap S_3|\leq1$ for each $P\in\mathcal{P}_1$, such that}

\emph{\textbf{\emph{(}D1\emph{)}} for every vertex pair $(v_i, v_i^\prime)$ $(i\in[i_0])$, there are two disjoint paths in $\mathcal{P}_1$: one is from $v_{i}$ to $S_2$, and the other is from $S_2$ to $v_{i}^\prime$;}

\emph{\textbf{\emph{(}D2\emph{)}} for every vertex pair $(v_i, v_i^\prime)$ in $V_3^\prime$, there is a path of from $v_{i}$ to $S_2$;}

\emph{\textbf{\emph{(}D3\emph{)}} for each vertex pair $(v_i, v_i^\prime)$ in $V_4^\prime$, there exists a path from $S_2$ to $v_{i}^\prime$.}
\begin{proof}
Assume $|\mathcal{P}_1|<2i_0+l_3+l_4$, with $|\mathcal{P}_1|$ maximized and $|V(\mathcal{P}_1)|$ minimized. First we assume that $(D1)$ is incorrect. Consider two cases:

\smallskip

\textbf{Case A (Insufficient $(v_i, S_2)$-paths for $i\in[i_0]$).} If the number of disjoint $(v_i, S_2)$-paths is less than $i_0$, then consider another vertex pair $(v_j, v_{j}^\prime)$ with $j\in[i_0]$. If an arc exists from $N_{S_1\setminus V(\mathcal{P}_1)}^+(v_j)$ to $S_2$, then this contradicts the maximality of $|\mathcal{P}_1|$. Moreover, if there exists a vertex $b_{j}$ in $S_2\setminus V(\mathcal{P}_1)$ with $d^-_{S_1\setminus V(\mathcal{P}_1)}(b_{j})\neq 0$, then by $(\mathcal{A}1)$ a path of length at most $2$ exists from $N_{S_1\setminus V(\mathcal{P}_1)}^+(v_j)$ to $N_{S_1\setminus V(\mathcal{P}_1)}^-(b_{j})$. Thus, in $D-\mathcal{P}_1$, a path of length at most $4$ exists from $v_j$ to $S_2\setminus V(\mathcal{P}_1)$, contradicting the maximality of $|\mathcal{P}_1|$. Symmetrically, if there exists a vertex $a_j\in S_1\setminus V(\mathcal{P}_1)$ with $d^+_{S_2\setminus V(\mathcal{P}_1)}(a_{j})\neq 0$, then by $(\mathcal{A}1)$ again, there is a path of length at most $2$ from $N_{S_1\setminus V(\mathcal{P}_1)}^+(v_j)$ to $N^-_{S_1\setminus V(\mathcal{P}_1)}(a_{j})$. This implies a path of length at most $4$ from $v_j$ to $S_2\setminus V(\mathcal{P}_1)$, again contradicting the maximality of $|\mathcal{P}_1|$.

\smallskip

Hence, we reduce the case where
$$ \mbox{for any}\ a_j\in S_1\setminus V(\mathcal{P}_1)\ \mbox{and any}\ b_{j}\in S_2\setminus V(\mathcal{P}_1),\  d^+_{S_2\setminus V(\mathcal{P}_1)}(a_j)=0=d^-_{S_1\setminus V(\mathcal{P}_1)}(b_{j}).$$ Obviously, $d^-_{V_1^{\prime\prime}}(b_{j})\leq i_0$, because otherwise it would contradict the maximality of $|\mathcal{P}_1|$. Also, by (\uppercase\expandafter{\romannumeral1}) and (\uppercase\expandafter{\romannumeral2}), for at least $n/3$ such vertices $a_j$ and $b_{j}$, respectively, we have that $$d^+_{V_3^\prime}(a_j)\leq l_3,\ \mbox{and}\ d^-_{V_3^\prime}(b_{j})\leq l_3.$$ Since, otherwise, we can get that there are least $n/6$ vertices $a_i\in S_1\setminus V(\mathcal{P}_1)$ with $d^+_{V_3^\prime}(a_j)>l_3$, or at least $n/6$ vertices $b_{j}\in S_2\setminus V(\mathcal{P}_1)$ with $d^-_{V_3^\prime}(b_{j})>l_3$, respectively. This further suggests that there exists a vertex pair $(v_{j}, v_{j}^\prime)$ in $V_3^\prime$ such that $$d^+_{S_1}(v_{j}), d^-_{S_1}(v_{j})\geq 4k,\ \mbox{or}\ d^+_{S_2}(v_{j}), d^-_{S_2}(v_{j})\geq 4k,\ \mbox{respectively}.$$ Then we have $V_3^\prime\cap(V_1\cup V_2)\neq\emptyset$, a contradiction with (\uppercase\expandafter{\romannumeral1}) above. So $$d^+_F(a_j)\leq2l_1+2l_2+l_3+2l_4\ \mbox{and}\ d_F^-(b_{j})\leq2(l_1-i_0)+i_0+2l_2+l_3+2l_4.$$ Then using  $\delta^0(D)\geq n/2+k$, the minimality of $|V(\mathcal{P}_1)|$ and $(\mathcal{A}2)$, we get that for at least $n/3$ such vertices $a_j$ and $b_{j}$, respectively,
\begin{align}\label{erer}
&|N^+_{S_3\setminus V(\mathcal{P}_1)}(a_j)\cap N^-_{S_3\setminus V(\mathcal{P}_1)}(b_{j})|\nonumber\\
&\geq2(n/2+k)-(|S_1|+|S_2|+|S_3\setminus V(\mathcal{P}_1)|-(4l_1-i_0+4l_2+2l_3+4l_4)-|\mathcal{P}_1|\nonumber\\
&\geq i_0+2l_3.
\end{align}
This implies the existence of another $(v_i, S_2)$-path, contradicting the maximality of $|\mathcal{P}_1|$.

\medskip

\textbf{Case B (Insufficient $(S_2, v_i^\prime)$-paths for $i\in[i_0]$).} If the number of disjoint $(S_2, v_i^\prime)$-paths is less than $i_0$, then consider another vertex pair $(v_j, v_{j}^\prime)$ with $j\in[i_0]$. If an arc exists from $S_2$ to $N_{S_1\setminus V(\mathcal{P}_1)}^-(v_j^\prime)$, then this contradicts the maximality of $|\mathcal{P}_1|$. More generally, if there exists a vertex $b_{j}\in S_2\setminus V(\mathcal{P}_1)$ with $d^+_{S_1\setminus V(\mathcal{P}_1)}(b_{j})\neq 0$, then by $(\mathcal{A}1)$ a path of length at most $2$ exists from $N_{S_1\setminus V(\mathcal{P}_1)}^+(b_j)$ to $N_{S_1\setminus V(\mathcal{P}_1)}^-(v_{j}^\prime)$. Thus, in $D-\mathcal{P}_1$ there is a path of length at most $4$ from $S_2\setminus V(\mathcal{P}_1)$ to $v_j^\prime$, contradicting the maximality of $|\mathcal{P}_1|$, again. Symmetrically, if there exists a vertex $a_j\in S_1\setminus V(\mathcal{P}_1)$ with $d^-_{S_2\setminus V(\mathcal{P}_1)}(a_{j})\neq 0$, then by $(\mathcal{A}1)$ again, a path of length $\leq2$ exists from $N_{S_1\setminus V(\mathcal{P}_1)}^+(a_j)$ to $N^-_{S_1\setminus V(\mathcal{P}_1)}(v_j^\prime)$. This implies a path of length at most $4$ exists from $S_2\setminus V(\mathcal{P}_1)$ to $v_j^\prime$, again contradicting the maximality of $|\mathcal{P}_1|$.

\smallskip

Hence, we reduce the case where
$$\mbox{for any}\ a_j\in S_1\setminus V(\mathcal{P}_1)\ \mbox{and any}\ b_{j}\in S_2\setminus V(\mathcal{P}_1),\ d^-_{S_2\setminus V(\mathcal{P}_1)}(a_j)=0=d^+_{S_1\setminus V(\mathcal{P}_1)}(b_{j}).$$ Then symmetric to $(\ref{erer})$, for at least $n/3$ such vertices $a_j$ and $b_j$, respectively,
\begin{equation*}
\begin{split}
&|N^-_{S_3\setminus V(\mathcal{P}_1)}(a_j)\cap N^+_{S_3\setminus V(\mathcal{P}_1)}(b_{j})|\\
&\geq2(n/2+k)-(|S_1|+|S_2|+|S_3\setminus V(\mathcal{P}_1)|)-(4l_1-i_0+4l_2+4l_3+2l_4)-|\mathcal{P}_1|\\
&\geq i_0+2l_4.
\end{split}
\end{equation*}
This implies the existence of another $(S_2, v_j^\prime)$-path, contradicting the maximality of $|\mathcal{P}_1|$.

\smallskip

Finally, the case where (D2) fails is analogous to Case A. Similarly, the case where (D3) fails is analogous to Case B. Since the proofs for these cases follow the same reasoning as previously established, we omit them for brevity. Thus Subcase \ref{subcase1} holds.
\end{proof}
\begin{Subcase}\label{subcase2}
Neither condition $(a)$ nor $(b)$ of $(\ref{ab})$ holds.
\end{Subcase}
In this subcase, for every vertex pair $(v_i, v_i^\prime)$ in $V_j^\prime$ ($j\in[2]$), there exist two disjoint arcs: one from $v_i$ to $S_j$ and another from $S_j$ to $v_i^\prime$. Without loss of generality, assume there exists a subscript $i_1$ ($l_1+l_2+1\leq i_1\leq l_1+l_2+l_3$) such that  $$n_{l_1+l_2+1}+\cdots+n_{i_1}\leq|S_1|-\sum\nolimits_{i=1}^{l_1}n_i< n_{l_1+l_2+1}+\cdots+n_{i_1+1}.$$ Then we now assert the following statement.

\smallskip

\emph{\textbf{\emph{(}3.1.2\emph{)}} There is a set $\mathcal{P}_2$ of $l_3+l_4$ disjoint minimal paths $($each of length $\leq4$$)$ with $|V(P)\cap S_3|\leq1$ for all $P\in \mathcal{P}_2$, such that}

\emph{\textbf{\emph{(}F1\emph{)}} for every vertex pair $(v_i, v_{i}^\prime)$ with $l_1+l_2+1\leq i\leq i_1$, $\mathcal{P}_2$ contains a $(S_1, v_i^\prime)$-path;}

\emph{\textbf{\emph{(}F2\emph{)}} for each $(v_i, v_{i}^\prime)$ with $i_1< i\leq l_1+l_2+l_3$, $\mathcal{P}_2$ contains a $(v_i, S_2)$-path;}

\emph{\textbf{\emph{(}F3\emph{)}} for each $(v_i, v_{i}^\prime)$ with $l_1+l_2+l_3+1\leq i\leq k$, $\mathcal{P}_2$ contains a $(S_2, v_i^\prime)$-path.}
\begin{proof}
Assume $|\mathcal{P}_2|<l_3+l_4$, with $|\mathcal{P}_2|$ maximized and $|V(\mathcal{P}_2)|$ minimized. We first assume that (F1) fails. For another vertex pair $(v_j, v_{j}^\prime)$ with $l_1+l_2+1\leq j\leq i_1$, if there exists an arc from $S_1\setminus V(\mathcal{P}_2)$ to $N^-_{S_2\setminus V(\mathcal{P}_2)}(v_{j}^\prime)$, or there exists a vertex $a_{j}\in S_1\setminus V(\mathcal{P}_2)$ with $d^+_{S_2\setminus V(\mathcal{P}_2)}(a_j)\neq0$, and there exists a vertex $b_{j}\in S_2\setminus V(\mathcal{P}_2)$ with $d^-_{S_1\setminus V(\mathcal{P}_2)}(b_j)\neq0$, respectively, then by $(\mathcal{A}1)$, there exists a path of length at most $2$ from $N^+_{S_2\setminus V(\mathcal{P}_2)}(a_j)$ to $N^-_{S_2\setminus V(\mathcal{P}_2)}(v_j^\prime)$, and from $b_j$ to $N^-_{S_2\setminus V(\mathcal{P}_2)}(v_j^\prime)$, respectively. This implies a $(S_1, v_j^\prime)$-path in $D-\mathcal{P}_2$, contradicting the maximality of $|\mathcal{P}_2|$.

 \smallskip


\smallskip

Thus, we reduce to the case that for all $a_{j}\in S_1\setminus V(\mathcal{P}_2)$ and $b_{j}\in S_2\setminus V(\mathcal{P}_2)$:
$$d^+_{S_2\setminus V(\mathcal{P}_2)}(a_{j})=0\ \mbox{and}\ d^-_{S_1\setminus V(\mathcal{P}_2)}(b_{j})=0.$$
Obviously, we have that $d^+_F(a_{j})\leq2l_1+2l_2+l_3+2l_4$ and $d^-_{F}(b_{j})\leq2l_1+2l_2+l_3+2l_4$. Since, otherwise, it will contradict with the choosing condition (\uppercase\expandafter{\romannumeral1}) above. Further, by the lower bound of $\delta^0(D)$, the minimality of $|V(\mathcal{P}_2)|$, and $(\mathcal{A}2)$, for at least $n/3$ such vertices $a_{j}$ and $b_{j}$, respectively, we obtain that
\begin{align}\label{ererer}
&|N^+_{S_3\setminus V(\mathcal{P}_2)}(a_j)\cap N^-_{S_3\setminus V(\mathcal{P}_2)}(b_{j})|\nonumber\\
&\geq2(n/2+k)-(|S_1|+|S_2|+|S_3\setminus V(\mathcal{P}_2)|)-(4l_1+4l_2+2l_3+4l_4)-|\mathcal{P}_2|\nonumber\\
&\geq2l_3.
\end{align}
This implies another $(S_1, v_j^\prime)$-path exists, contradicting the maximality of $|\mathcal{P}_2|$ again.

\smallskip

The proof for the case when (F2) fails is analogous, and so is omitted. For (F3), if it fails, then in the same way, we consider another vertex pair $(v_j, v_{j}^\prime)$ with $l_1+l_2+l_3+1\leq j\leq k$. If an arc exists from $S_2\setminus V(\mathcal{P}_2)$ to $N^-_{S_1\setminus V(\mathcal{P}_2)}(v_{j}^\prime)$, or there exists a vertex $b_{j}$ in $S_2\setminus V(\mathcal{P}_2)$ with $d^+_{S_1\setminus V(\mathcal{P}_2)}(b_j)\neq0$, and there is a vertex $a_{j}\in S_1\setminus V(\mathcal{P}_2)$ with $d^-_{S_2\setminus V(\mathcal{P}_2)}(a_{j})\neq0$, respectively, then another $(S_2, v_j^\prime)$-path exists: by $(\mathcal{A}1)$, a path of length at most $2$ exists from $N^+_{S_1\setminus V(\mathcal{P}_2)}(b_j)$ to $N^-_{S_1\setminus V(\mathcal{P}_2)}(v_j^\prime)$, and from $a_j$ to $N^-_{S_1\setminus V(\mathcal{P}_2)}(v_j^\prime)$, respectively. This contradicts the maximality of $|\mathcal{P}_2|$.

\smallskip

Hence we come down to the case that
$$\mbox{for all}\ a_j\in S_1\setminus V(\mathcal{P}_2)\ \mbox{and}\ b_{j}\in S_2\setminus V(\mathcal{P}_2),\ d^-_{S_2\setminus V(\mathcal{P}_2)}(a_j)=0=d^+_{S_1\setminus V(\mathcal{P}_2)}(b_j).$$ Symmetrically to the first case, we get that $d^-_F(a_j)\leq2l_1+2l_2+2l_3+l_4$ and $d^+_F(b_j)\leq2l_1+2l_2+2l_3+l_4$. So similar to $(\ref{ererer})$, for at least $n/3$ such vertices $a_j$ and $b_j$, we derive:
\begin{equation*}
\begin{split}
&|N^-_{S_3-2|\mathcal{P}_2|}(a_j)\cap N^+_{S_3-2|\mathcal{P}_2|}(b_j)|\\
&\geq2(n/2+k)-(|S_1|+|S_2|+|S_3\setminus V(\mathcal{P}_2)|)-(4l_1+4l_2+4l_3+2l_4)-|\mathcal{P}_2|\\
&\geq2l_4.
\end{split}
\end{equation*}
This suggests that we can obtain another $(S_2, v_j^\prime)$-path, again contradicting the maximality of $|\mathcal{P}_2|$. Thus, the statement (3.1.2) holds, and then Subcase \ref{subcase2} is proved.
\end{proof}

\smallskip

Combining Subcases \ref{subcase1} and \ref{subcase2}, we conclude that Claim \ref{ccccl} holds.
\end{proof}
To complete \textbf{Step 2}, we need to establish Claim \ref{S}. Let $\mathcal{P}$ be the set of  disjoint paths obtained in Claim \ref{ccccl}. For each $i\in[3]$, define $S_i^\prime=S_i\setminus V(\mathcal{P})$. We show the following conclusion.
\begin{claim}\label{S}
For any vertex $u$ in $S_3^\prime$, there exists some subscript $j\in[2]$ such that there is a $S_{j}^\prime$-path of length at most $4$ containing the vertex $u$.
\end{claim}
\begin{proof}
Let $S_{3, 1}^\prime$ (resp., $S_{3, 2}^\prime$) be the set of vertices $u$ in $S_3^\prime$ that satisfy $d^-_{S_1}(u), d^+_{S_2}(u)>\frac{1}{2}(1-2\varepsilon^{1/3})n$ (resp., $d^+_{S_1}(u), d^-_{S_2}(u)>\frac{1}{2}(1-2\varepsilon^{1/3})n$). 
We first consider the case when $|S_{3, 1}^\prime|=|S_{3, 2}^\prime|$. For any $u\in S_{3, 1}^\prime$ and any vertex $v\in S_{3, 2}^\prime$, the intersection properties yield: 
\begin{equation*}
\begin{split}
&|N^-_{S_1^\prime}(u)\cap N^+_{S_1^\prime}(v)|\geq(1-2\varepsilon^{1/3})n-(|S_1^\prime|+2|V(\mathcal{P})|)>n/3,\ and\\
 &|N^+_{S_2^\prime}(u)\cap N^-_{S_2^\prime}(v)|\geq(1-2\varepsilon^{1/3})n-(|S_2^\prime|+2|V(\mathcal{P})|)>n/3.
\end{split}
\end{equation*}
This implies the existence of $|S_{3, 1}^\prime|$ disjoint $S_2^\prime$-paths (or $S_1^\prime$-paths) of length $4$ and the form $S_2^\prime\rightarrow v\rightarrow S_1^\prime\rightarrow u\rightarrow S_2^\prime$ (or $S_1^\prime\rightarrow u\rightarrow S_2^\prime\rightarrow v\rightarrow S_1^\prime$), covering $S_{3, 1}^\prime\cup S_{3, 2}^\prime$. The claim holds in this case.

\smallskip

Now suppose $|S_{3, 1}^\prime|\neq|S_{3, 2}^\prime|$. Without loss of generality, suppose that $|S_{3, 1}^\prime|>|S_{3, 2}^\prime|$ and let $r=|S_{3, 1}^\prime|-|S_{3, 2}^\prime|$. Next, we define $M_1$ (resp., $M_2$) as the set of matching edges from $S_{3, 1}^\prime$ to $S_1^\prime$ (resp., from $S_{2}^\prime$ to $S_{3, 1}^\prime$), and $M_1$ and $M_2$ are disjoint. We take $M_i$ for any $i\in[2]$ to be as large as possible such that $|S_{3, 1}^\prime\setminus V(M)|$ is minimum. Set $M=M_1\cup M_2$.

\smallskip

If $|M|\geq r$, then for any vertex $u\in S_{3, 1}^\prime$, the degree conditions $d^-_{S_1}(u), d^+_{S_2}(u)>\frac{1}{2}(1-2\varepsilon^{1/3})n$ ensure the existence of $r$ disjoint $S_1^\prime$-paths and $S_2^\prime$-paths of length $2$, of the form $S_1\rightarrow M_1$ and $M_2\rightarrow S_2$, respectively. Clearly, $|S_{3, 1}^\prime|-r=|S_{3, 2}^\prime|$. For the remaining vertices $u\in S_{3, 1}^\prime$ and all $v\in S_{3, 2}^\prime$, the intersection argument (analogous to the previous case) provides $|S_{3, 2}^\prime|$ disjoint $S_2^\prime$-paths (or $S_1^\prime$-paths ) of length $4$, of the form $$S_2^\prime\rightarrow v\rightarrow S_1^\prime\rightarrow u\rightarrow S_2^\prime\ (\mbox{or}\ S_1^\prime\rightarrow u\rightarrow S_2^\prime\rightarrow v\rightarrow S_1^\prime),$$ covering all remaining vertices of $S_{3, 1}^\prime$ and all vertices in $S_{3, 2}^\prime$. This completes the proof of Claim \ref{S}.

\smallskip

In the following, we assume $|M|<r$, and define $S_{i}^{\prime\prime}=S_{i}^\prime\setminus V(M)$ and $S_{3, 1}^{\prime\prime}=S_{3, 1}^\prime\setminus V(M)$. Since $M_1$ and $M_2$ are maximum matchings and $n=|S_1^\prime|+|S_2^\prime|+|S_{3, 1}^\prime|+|S_{3, 2}^\prime|+|V(\mathcal{P})|$, for any vertex $u\in S_1^{\prime\prime}$, we have that
\begin{align}\label{313}
d^-_{S_2^{\prime\prime}}(u)&\geq\delta^0(D)-(|S_1^{\prime}\setminus V(M)|+|S_{3, 2}^{\prime}|+d^-_{M}(u)+d^-_{\mathcal{P}}(u))\nonumber\\
&\geq\frac{|S_2^\prime|-|S_1^\prime|+|S_{3, 1}^\prime|-|S_{3, 2}^\prime|+|V(\mathcal{P})|}{2}+k+|M_1|-d^-_{M}(u)-d^-_{\mathcal{P}}(u).
\end{align}
Symmetrically, for any vertex $v$ in $S_{2}^{\prime\prime}$, we obtain that
 \begin{align}\label{314}
d^+_{S_1^{\prime\prime}}(v)&\geq\delta^0(D)-(|S_2^{\prime}\setminus V(M)|+|S_{3, 2}^{\prime}|+d^+_{M}(v)+d^+_{\mathcal{P}}(v))\nonumber\\
&\geq\frac{|S_1^\prime|-|S_2^\prime|+|S_{3, 1}^\prime|-|S_{3, 2}^\prime|+|V(\mathcal{P})|}{2}+k+|M_2|-d^+_{M}(v)-d^+_{\mathcal{P}}(v).
\end{align}
To further simplify inequalities $(\ref{313})$ and $(\ref{314})$, we first assert that $d^-_{M}(u)+d^+_{M}(v)\leq|V(M)|$.
Suppose, for contradiction, that $d^-_{M}(u)+d^+_{M}(v)\geq|V(M)|+1$. This implies the existence of an arc $xy \in M$, such that $d^-_{xy}(u)+d^+_{xy}(v)\geq3$.

\smallskip

If $xy \in M_1$, i.e., $xy$ is an arc from $S_{3, 1}^\prime$ to $S_1^\prime$, then $vx\notin A(D)$. Otherwise, by the definition of $S_{3, 1}^\prime$, for any vertex $w\in S_{3, 1}^\prime\setminus V(M)$, $d^-_{S_1^\prime}(w), d^+_{S_2^\prime}(w)>\frac{1}{2}(1-3\varepsilon^{1/3})n$, and for any vertex $v^\prime\in S_2^\prime$, $\delta^0_{S_2^\prime\setminus V(M)}(v^\prime)\geq(1-10\sqrt{\varepsilon})|S_2^\prime\setminus V(M)|$. This implies the existence of a vertex $v^\prime\in S_2^{\prime}\setminus V(M)$ such that $wv^\prime, vx\in A(D)$, leading to a $S_1^\prime$-path of length $4$ of the form $S_1^\prime\rightarrow w\rightarrow v^\prime \rightarrow xy$, contradicting the minimality of $|S_{3, 1}^\prime\setminus V(M)|$. Hence $vx\notin A(D)$, and then $xu, yu, vy\in A(D)$ since $d^-_{xy}(u)+d^+_{xy}(v)\geq3$. Replacing $xy$ in $M_1$ with $xu$ leads to a $S_1^\prime$-path of length $3$ of the form $S_1^\prime\rightarrow w\rightarrow v^\prime\rightarrow vy$, where $v^\prime\in N^+_{S_2^\prime\setminus V(M)}(w)\cap N^-_{S_2^\prime\setminus V(M)}(v)$, 
again contradicting the minimality of $|S_{3, 1}^\prime\setminus V(M)|$. 

\smallskip

Similarly, if $xy\in M_2$, assuming $xu\in A(D)$ leads to a $S_1^\prime$-path of length $4$ of the form $S_1^\prime\rightarrow w \rightarrow v^\prime v\rightarrow xu$, contradicting the minimality of $|S_{3, 1}^\prime\setminus V(M)|$. Therefore, $vx, vy, yu\in A(D)$ and $xu\notin A(D)$, implying another $S_1^\prime$-path of length $4$ of the form $S_1^\prime\rightarrow w\rightarrow v^\prime\rightarrow yu$, again a contradiction. Thus, we prove that $d_{xy}^-(u)+d_{xy}^+(v)\leq2$ for any $xy\in M$, which concludes that $d^-_{M}(u)+d^+_{M}(v)\leq|M|$.

\smallskip

By a similar argument to the one used for $M$, we can prove that $d^-_{\mathcal{P}}(u)+d^+_{\mathcal{P}}(v)\leq|V(\mathcal{P})|$. Specifically, by choosing $\mathcal{P}$ such that the remaining of $|S_{3, 1}^\prime|$ is minimized, we can derive this inequality through analogous reasoning. For brevity, we omit the detailed proof here.

\smallskip

So from inequalities (\ref{313}) and (\ref{314}), we have that
\begin{align*}\label{2m}
\begin{split}
|N^-_{S_2^\prime}(u)\cup N^+_{S_1^\prime}(v)|\geq|S_{3, 1}^\prime|-|S_{3, 2}^\prime|-|M|\geq r-|M|,
\end{split}
\end{align*}
implying the existence of $r-|M|$ disjoint arcs $vu$ from $S_2^\prime$ to $S_1^\prime$. Then for $r$ distinct vertices $w$ in $S_{3, 1}^\prime$, by the definition of $S_{3, 1}^\prime$, we know that $d^-_{S_1}(w), d^+_{S_2}(w)>\frac{1}{2}(1-2\varepsilon^{1/3})n$. Combining this with the properties of $S_2^\prime$, we have: $$|N_{S_2^\prime}^-(v)\cap N_{S_2^\prime}^+(w)|\geq(1-10\sqrt{\varepsilon})|S_2^\prime|+
(1-2\varepsilon^{1/3})n/2-|S_2^\prime|>n/3.$$
Thus, there exists an $S_1^\prime$-path of length $4$ of the form $S_1^\prime\rightarrow w\rightarrow S_2^\prime\rightarrow v\rightarrow u$. Similarly, we can construct $r-|M|$ disjoint $S_1^\prime$-paths of length $4$, utilizing a distinct arc $vu$ with $u\in S_1^\prime$ and $v\in S_2^\prime$. Additionally, for the  remaining $|M|$ distinct vertices in $S_{3, 1}^\prime$, we can obtain $|M|$ disjoint $S_1^\prime$-paths of the form $S_1^\prime\rightarrow M_1$ and $S_1^\prime$-paths of the form $M_2\rightarrow S_2^\prime$. This completes the proof of Claim \ref{S}.
\end{proof}

We now complete \textbf{Step 3}. Recall that $\mathcal{P}$ denotes the set of disjoint paths obtained from Claim \ref{ccccl}. Furthermore, for all $i\in[k]$ and $j\in[2]$, let $\mathcal{P}_j\subseteq\mathcal{P}$ be the collection of disjoint $(v_i, S_j)$-paths and $(S_j, v_i^\prime)$-paths. Without loss of generality, assume indices $i\in[l]$ correspond to paths of $\mathcal{P}_1$ and $i\in[k]\setminus[l]$ to paths of $\mathcal{P}_2$. Let $\mathcal{P}^\prime=\mathcal{P}^\prime_1\cup \mathcal{P}^\prime_2$ be the set of disjoint paths covering all vertices of $S_3^\prime$ from Claim \ref{S}, where for each $j\in[2]$, $\mathcal{P}^\prime_j$ denotes the disjoint $S_j^\prime$-paths, and $\mathcal{P}^\prime_1$ and $\mathcal{P}^\prime_2$ are disjoint. Define $S_j^{\prime\prime}=S_j^\prime\setminus V(\mathcal{P}^\prime)$ for $j\in[2]$. We proceed by case analysis:

\smallskip

\noindent \textbf{Case A1. $|V(S_1^{\prime\prime})\cup V(\mathcal{P}_1)\cup V(\mathcal{P}^\prime_1)|=\sum_{i\in[l]}n_i+l$.} By symmetry, this implies $|V(S_2^{\prime\prime})\cup V(\mathcal{P}_2)\cup V(\mathcal{P}^\prime_2)|=\sum_{i\in[k]\setminus[l]}n_i+k-l$. Property $(\mathcal{A}1)$ guarantees that for each $j\in[2]$, except for a subset $R_{j}\subseteq S_j^{\prime\prime}$ with $|R_{j}|\leq 10\sqrt{\varepsilon}|S_j|$, every vertex $v\in S_j^{\prime\prime}\setminus R_{j}$ satisfies $\delta^0_{S_{j}^{\prime\prime}}(v)\geq (1-20\sqrt{\varepsilon})|S_{j}^{\prime\prime}|$, while vertices $u$ in $R_{j}$ satisfy $\delta^0_{S_{j}^{\prime\prime}}(u)\geq\frac{\varepsilon^{1/3}|S_{j}^{\prime\prime}|}{2}$. This degree structure allows us to construct short disjoint $(v_i, u_i)$-paths and $(u_{i}^\prime, v_{i}^\prime)$-paths (for $i\in[k]$) within $D[S_1^{\prime\prime}]$ and $D[S_2^{\prime\prime}]$, respectively, utilizing:

$\bullet$ non-exceptional vertices of $S_1^{\prime\prime}$ and $S_2^{\prime\prime}$;

$\bullet$ disjoint paths from $\mathcal{P}\cup\mathcal{P}^\prime$;

$\bullet$ all vertices of $R_{1}$ and $R_{2}$.

\noindent Then, for convenience, let $S_1^0$ and $S_2^0$ denote the remaining vertices of $S_1^{\prime\prime}$ and $S_2^{\prime\prime}$, respectively. Then we get that

$\bullet$ $|S_j^0|\geq|S_j|-2\varepsilon^{1/6}n$ for $j\in[2]$, and

$\bullet$ for any $u\in S_j^0$, $\delta^0_{S_j^0}(u)\geq\frac{1}{2}(1-3\varepsilon^{1/6})n\geq(1-4\varepsilon^{1/6})|S_j^0|$.

\noindent Since the vertex pairs $(u_i, u_i^\prime)$ (for $i\in[k]$) lie in $S_j^{\prime\prime}\setminus R_j$, Proposition \ref{prop}, applies to the the $\varepsilon$-almost complete subdigraphs $D[S_1^0]$ and $D[S_2^0]$, yielding the desired $(u_i, u_{i}^\prime)$-subpaths ($i\in[k]$) of appropriate lengths. This completes Case \ref{case3.1}.

\smallskip

\noindent \textbf{Case A2. $|V(S_1^{\prime\prime})\cup V(\mathcal{P}_1)\cup V(\mathcal{P}^\prime_1)|<\sum_{i\in[l]}n_i+l$.} By symmetry, this implies $|V(S_2^{\prime\prime})\cup V(\mathcal{P}_2)\cup V(\mathcal{P}^\prime_2)|>\sum_{i\in[k]\setminus[l]}n_i+k-l$. We focus on the subcase where $V(P_1)\cap S_2=\emptyset$ for all $P_1\in \mathcal{P}_1$, and $V(P_2)\cap S_1=\emptyset$ for all $P_2\in \mathcal{P}_2$. Remaining subcases follow analogous reasoning and are omitted for brevity.

\smallskip

Since $\delta^0(D)\geq n/2+k$, for any vertex $u\in S_1$ and any $v\in S_2$:

$\bullet$ if $uv\in A(D)$, then $|N^+(u)\cap N^-(v)|\geq2\delta^0(D)-(n-1)\geq1+2k$;

$\bullet$ if $uv\notin A(D)$, then $|N^+(u)\cap N^-(v)|\geq 2\delta^0(D)-(n-2)\geq2+2k$.

\noindent Symmetrically, analogous bounds hold for $|N^-(u)\cap N^+(v)|$. These inequalities ensure: 

$\bullet$ two disjoint minimal $(S_1, S_2)$-paths of length $\leq2$, and

$\bullet$ two disjoint minimal $(S_2, S_1)$-paths of length $\leq2$.

\noindent Using these paths and following a procedure analogous to Case A1, we establish the validity of Case \ref{case3.1}.

For the case when $|V(S_1^{\prime\prime})\cup V(\mathcal{P}_1)\cup V(\mathcal{P}^\prime_1)|>\sum_{i\in[l]}n_i+l$ (by symmetry, this implies $|V(S_2^{\prime\prime})\cup V(\mathcal{P}_2)\cup V(\mathcal{P}^\prime_2)|<\sum_{i\in[k]\setminus[l]}n_i+k-l$), it is symmetric to Case A2. By swapping $S_1^{\prime\prime}$, $\mathcal{P}^\prime_1$ and $S_2^{\prime\prime}$, $\mathcal{P}^\prime_2$, respectively, the proof follows identically to that of Case A2, and is therefore omitted.
\end{proof}
\begin{Case}\label{case3.2}
$|U_1\cap U_2|>(1/2-\varepsilon_1)n$.
\end{Case}
Likewise, in this case, we complete the proof of Lemma \ref{case} in three steps:

\smallskip

\noindent\textbf{Step 1.} We prove that for every vertex pair $(v_i, v_i^\prime)$, by applying Definition \ref{definition3.3} with $(U_1, U_2)_{\ref{definition3.3}}=(W_2^\prime, W_4^\prime)$, the following hold (Claim \ref{C1}):\\
(\uppercase\expandafter{\romannumeral1}) if $n_i$ is even, then one of the following holds:

(i) $(v_i, v_i^\prime)\in V_1$ or $V_2$, or

(ii) $(v_i, v_i^\prime)\in V_{3}$ and there exists exactly one of

$~~~~$ $\bullet$ a $(v_i, W_2^\prime)$-path of length $2$ and $(W_{4}^\prime, v_i^\prime)$-path of length $2$, or

$~~~~$ $\bullet$ a $(v_i, W_{4}^\prime)$-path of length $3$ and $(W_2^\prime, v_i^\prime)$-path of length $3$, or

(iii) $(v_i, v_i^\prime)\in V_{4}$ and there exists exactly one of:

$~~~~$ $\bullet$ a $(v_i, W_4^\prime)$-path of length $2$ and $(W_{2}^\prime, v_i^\prime)$-path of length $2$, or

$~~~~$ $\bullet$ a $(v_i, W_{2}^\prime)$-path of length $3$ and $(W_4^\prime, v_i^\prime)$-path of length $3$.\\
\smallskip
(\uppercase\expandafter{\romannumeral2}) If $n_i$ is odd, then one of the following holds:

(iv) $(v_i, v_i^\prime)\in V_3$ or $V_4$, or

(v) $(v_i, v_i^\prime)\in V_{1}$ and there exists exactly one of:

$~~~~$ $\bullet$ a $(v_i, W_{2}^\prime)$-path of length $2$ and $(W_{2}^\prime, v_i^\prime)$-path of length $2$, or

$~~~~$ $\bullet$ a $(v_i, W_{4}^\prime)$-path of length $3$ and $(W_{4}^\prime, v_i^\prime)$-path of length $3$, or

(vi) $(v_i, v_i^\prime)\in V_{2}$ and there exists exactly one of:

$~~~~$ $\bullet$ a $(v_i, W_{4}^\prime)$-path of length $2$ and $(W_{4}^\prime, v_i^\prime)$-path of length $2$, or

$~~~~$ $\bullet$ a $(v_i, W_{2}^\prime)$-path of length $3$ and $(W_{2}^\prime, v_i^\prime)$-path of length $3$.

\smallskip
\noindent\textbf{Step 2.} We prove that there is a set $\mathcal{R}$ of disjoint paths such that $|W_2^\prime\setminus V(\mathcal{R})|=|W_4^\prime\setminus V(\mathcal{R})|$ (Claim \ref{D1}).

\smallskip

\noindent\textbf{Step 3.} In the balanced $\varepsilon$-almost complete bipartite subdigraph $(W_2^\prime\setminus V(\mathcal{R}), W_4^\prime\setminus V(\mathcal{R}))$, by Proposition \ref{prop} we obtain $2k$ disjoint paths with the desired lengths, where the initials and the terminals of these paths correspond to the required conditions. These paths, when combined with the disjoint paths in $\mathcal{R}$, form the desired $(v_i, v_i^\prime)$-paths for all $i\in[k]$.
\begin{proof}
We begin by performing some preparatory work before proceeding to \textbf{Step 1}. Clearly, in this case, we have that
$e^+(W_2^\prime, W_4^\prime), e^+(W_4^\prime, W_2^\prime)\geq|W_2^\prime|\cdot|W_4^\prime|-2\varepsilon n^2$, which implies that
$$|E_{4}\cap W_i^\prime|\leq|E_{3}\cap W_i^\prime|\leq\sqrt{10\varepsilon}|W_i^\prime|\ \mbox{for every}\ i\in\{2, 4\},$$ where the vertex sets $E_3$ and $E_4$ are defined in Definition \ref{definition3.2} with $(U_1, U_2)_{\ref{definition3.2}}=(W_2^\prime, W_4^\prime)$.

\smallskip

We first address the exceptional vertices of Type I$_4$ in $W_2^\prime\cup W_4^\prime$ using the following operation. For convenience, let $R=V(D)\setminus(F\cup W_2\cup W_4)$. If there exists a vertex $x$ in $E_{4}\cap W_2^\prime\cup R$ (resp., a vertex $y$ in $E_{4}\cap W_4^\prime\cup R$) such that $s_{W_4^\prime}(x)>\varepsilon^{1/3}|W_4^\prime|$ (resp., $s_{W_2^\prime}(y)>\varepsilon^{1/3}|W_2^\prime|$), then we move $x$ (resp., $y$) into the vertex set $W_4^\prime$ (resp., $W_2^\prime$) and update the sets $W_2^\prime$ and $W_4^\prime$. We repeat this operation until no such vertices $x$ and $y$ exist.

\smallskip

Next let $S_1=W_2^\prime\setminus E_4$ and $S_2=W_4^\prime\setminus E_4$, and let $S_3$ be the set of remaining vertices of $D$, that is $S_3=V(D)\setminus(F\cup S_1\cup S_2)$. Clearly, $|S_1|, |S_2|\geq(1/2-\varepsilon_1-\sqrt{10\varepsilon})n$. Together with $\delta^0(D)\geq n/2+k$ and the definitions of $S_1$, $S_2$ and $S_3$, we have the following properties:
\begin{itemize}\item[$(\mathcal{B}1)$] \emph{for every $i\in[2]$, apart from at most $10\sqrt{10\varepsilon}|S_i|$ exceptional vertices, all vertices in $S_i$ have strongly semi-degrees at least $(1-10\sqrt{\varepsilon})|S_{3-i}|$ in $S_{3-i}$, and the semi-degrees of these exceptional vertices are at least $\frac{\varepsilon^{1/3}|S_{3-i}|}{8}$ in $S_{3-i}$, and }
\item[$(\mathcal{B}2)$]
\emph{for every vertex $v\in S_3$, $\delta^0_{S_i}(v)\leq 2\varepsilon^{1/3}n$ for each $i\in[2]$, and either $d^+_{S_1}(v), d^-_{S_2}(v)>\frac{(1-2\varepsilon^{1/3})n}{2}$ or $d^-_{S_1}(v), d^+_{S_2}(v)>\frac{(1-2\varepsilon^{1/3})n}{2}$.}
\end{itemize}
Based on the semi-degree condition of $D$ and the cardinalities of $S_1$ and $S_2$, we know that for any vertex $v_i\in F$, $\delta^0_{S_1\cup S_2}(v_i)\geq\frac{n}{2}-k-|S_3|\geq\frac{1}{2}(1-2\varepsilon^{1/3})n$. Recall that $n_i$ is the length of the $(v_i, v_{i}^\prime)$-path for each $i\in[k]$. Without loss of generality, we assume that $n_1, \ldots, n_s$ are even, and $n_{s+1}, \ldots, n_k$ are odd.
By Definition \ref{definition3.3} with $(U_1, U_2)_{\ref{definition3.3}}=(S_1, S_2)$, for each $i\in[k]$,

$\bullet$ $(v_{i}, v_{i}^\prime)\in V_1$ if $|N^+(v_{i})\cap S_1|\geq 4k$ and $|N^-(v_{i}^\prime)\cap S_1|\geq 4k$;

$\bullet$ $(v_{i}, v_{i}^\prime)\in V_2$ if $|N^+(v_{i})\cap S_2|\geq 4k$ and $|N^-(v_{i}^\prime)\cap S_2|\geq 4k$;

$\bullet$ $(v_{i}, v_{i}^\prime)\in V_{3}$ if $|N^+(v_{i})\cap S_1|\geq 4k$ and  $|N^-(v_{i}^\prime)\cap S_2|\geq 4k$;

$\bullet$ $(v_{i}, v_{i}^\prime)\in V_{4}$ if $|N^+(v_{i})\cap S_2|\geq 4k$ and  $|N^-(v_{i}^\prime)\cap S_1|\geq 4k$.

\smallskip

Now we proceed to \textbf{Step 1}. For any vertex pair $(v_i, v_{i}^\prime)$ with $i\in[s]$, we consider the following cases:

$(C1)$ $(v_i, v_{i}^\prime)\in V_1$ or $V_2$, or

$(C2)$ $(v_{i}, v_{i}^\prime)\in V_{j+2}$ for some $j\in[2]$, and there exists exactly one arc from $N_{S_j}^+(v_{i})$ to $S_j$, or one arc from $S_{3-j}$ to $N_{S_{3-j}}^-(v_{i}^\prime)$. \\
If either $(C1)$ or $(C2)$ holds, then we do nothing. Symmetrically, for any vertex pair $(v_i, v_{i}^\prime)$ with $s+1\leq i\leq k$, we consider the following cases:

$(C3)$ $(v_{i}, v_{i}^\prime)\in V_{j+2}$ for some $j\in[2]$, or

$(C4)$ $(v_{i}, v_{i}^\prime)\in V_j$ for some $j\in[2]$, and there exists exactly an arc from $N_{S_j}^+(v_{i})$ to $S_j$ or an arc from $S_j$ to $N_{S_j}^-(v_{i}^\prime)$.\\
If either $(C3)$ or $(C4)$ holds, then we also do nothing. Additionally, we define the set of disjoint paths used by $(C1)$-$(C4)$ as $\mathcal{P}^\prime$. Otherwise, let


$\bullet$ $V_3^\prime$ (resp., $V_4^\prime$) be the set of vertex pairs $(v_i, v_{i}^\prime)$ with $i\in[s]$ that do not satisfy $(C1)$ and $(C2)$, and belong to $V_3$ (resp., $V_4$).

$\bullet$ $V_1^\prime$ (resp., $V_2^\prime$) be the set of vertex pairs $(v_i, v_{i}^\prime)$ with $s+1\leq i\leq k$ that do not satisfy $(C3)$ and $(C4)$, and are in $V_1$ (resp., $V_2$). \\
We then find a partition $(V_1^{\prime\prime}, V_2^{\prime\prime}, V_3^{\prime\prime}, V_4^{\prime\prime})$ of these vertex pairs, such that $V_i^{\prime\prime}\subseteq V_i^{\prime}$ for $i\in[4]$. Let $V^{\prime\prime}=V_1^{\prime\prime}\cup V_2^{\prime\prime}\cup V_3^{\prime\prime}\cup V_4^{\prime\prime}$, and define $l_i=|V_i^{\prime\prime}|$ with $i\in[4]$ and $l=l_1+l_2+l_3+l_4$. Further, we declare the following conclusion.
\begin{claim}\label{C1}
There exist $l$ disjoint paths of length $3$, denoted as $\mathcal{P}$, such that for any $P\in\mathcal{P}$, $|V(P)\cap S_3|=1$. Moreover, in $\mathcal{P}$:\\
$(i)$ there is a $(v_i, S_{3-j})$-path if $(v_i, v_{i}^\prime)\in V_{j+2}^{\prime\prime}$ for some $j\in[2]$ and,\\
$(ii)$ there is a $(v_{i}, S_{3-j})$-path if $(v_i, v_{i}^\prime)\in V_{j}^{\prime\prime}$.
\end{claim}
\begin{proof}
For any vertex $a_i\in N_{S_1\setminus\mathcal{P}^\prime}^+(v_i)$ with $(v_i, v_{i}^\prime)\in V_1^{\prime\prime}\cup V_3^{\prime\prime}$, since it does not satisfy $(C1)$-$(C4)$, we have that $d^+_{V^{\prime\prime}}(a_i)\leq 2l_1+l_2+l_3+2l_4$, and so $$d^+(a_i)\leq(2l_1+l_2+l_3+2l_4)+|S_2|+d^+_{S_3}(a_{i})+d^+_{\mathcal{P}^\prime}(a_i).$$ Similarly, for any vertex $b_j$ in $N^-_{S_2\setminus\mathcal{P}^\prime}(v_{j}^\prime)$ with $(v_j, v_{j}^\prime)\in V_2^{\prime\prime}$, we have that $$d^-(b_j)\leq(l_1+2l_2+l_3+2l_4)+|S_1|+d^-_{S_3}(b_j)+d^-_{\mathcal{P}^\prime}(b_j).$$ Clearly, $d^+_{\mathcal{P}^\prime}(a_i)+d^-_{\mathcal{P}^\prime}(b_j)\leq2(|F|-2l)$. On the other hand, by the lower bound of $\delta^0(D)$, we have that $$2(n/2+k)\leq d^+(a_{i})+d^+(b_{j}).$$ Together with $n=|S_1|+|S_2|+|S_3|+|F|$, this implies that
\begin{equation*}
\begin{split}
|N^+_{S_3}(a_i)\cap N^-_{S_3}(b_j)|\geq2k+l+l_3-l_4-|F|\geq l_1+l_2+2l_3.
\end{split}
\end{equation*}

\smallskip

Likewise, for any vertex $b_{i}\in N^+_{S_2\setminus \mathcal{P}^\prime}(v_{i})$ with $(v_i, v_{i}^\prime)\in V_2^{\prime\prime}\cup V_4^{\prime\prime}$, we have that $$d^+(b_{i})\leq(l_1+2l_2+2l_3+l_4)+|S_1|+d^+_{S_3}(b_{i})+d^+_{\mathcal{P}^\prime}(b_i).$$ Also for any vertex $a_{j}^\prime$ in
$N^-_{S_1\setminus V(\mathcal{P}^\prime)}(v_{j}^\prime)$ with $(v_j, v_{j}^\prime)\in V_1^{\prime\prime}$, we have that $$d^-(a_{j}^\prime)\leq(2l_1+l_2+2l_3+l_4)+|S_2|+d^-_{S_3}(a_{j}^\prime)+d^-_{\mathcal{P}^\prime}(a_{j}^\prime).$$ Obviously, $d^+_{\mathcal{P}^\prime}(b_i)+d^-_{\mathcal{P}^\prime}(a_{j}^\prime)\leq2(|F|-2l)$.
Combining with $\delta^0(D)\geq n/2+k$, $n=|S_1|+|S_2|+|S_3|+|F|$ and $l=l_1+l_2+l_3+l_4$, we get that
\begin{equation*}
\begin{split}
|N^+_{S_3}(b_{i})\cap N^-_{S_3}(a_{j}^\prime)|\geq 2k-|F|+l-l_3+l_4\geq l_1+l_2+2l_4.
\end{split}
\end{equation*}
Hence, with the help of the vertices in $S_3$, we can get construct the set $\mathcal{P}$ as desired. This completes the proof of Claim \ref{C1}.
\end{proof}
By Claim \ref{C1}, $(\mathcal{B}1)$ and the definitions of $\mathcal{P}$ and $\mathcal{P}^\prime$, along with the fact that semi-degree across vertices of $S_1\cup S_2$ is much larger than their order, we can obtain $2k$ disjoint minimal paths, called $\mathcal{P}^{\prime\prime}$, such that for every vertex pair $(v_i, v_{i}^\prime)$, in $\mathcal{P}^{\prime\prime}$ there is a path from $v_i$ to $u_i$ and a path from $u_{i}^\prime$ to $v_{i}^\prime$, where $u_i\in S_1$ and $u_{i}^\prime\in S_2$.

\smallskip

Next we proceed to \textbf{Step 2} (as outlined in Claim \ref{D1}). For every $i\in[2]$, set $S_i^\prime=S_i\setminus V((\mathcal{P}\cup \mathcal{P}^\prime\cup \mathcal{P}^{\prime\prime})\setminus\bigcup_{i=1}^k\{u_i, u_{i}^\prime\})$, and let $S_3^\prime$ be the remaining vertices of $S_3$. We now prove that the following conclusion holds.
\begin{claim}\label{D1}
There is a set $\mathcal{Q}$ of disjoint $(S_{2}^\prime, S_1^\prime)$-paths such that $|S_1^\prime\setminus V(\mathcal{Q})|=|S_2^\prime\setminus V(\mathcal{Q})|$, and $V(\mathcal{Q})\cap\bigcup_{i=1}^k\{u_i, u_{i}^\prime\}=\emptyset$.
\end{claim}
\begin{proof}
We can assume that $|S_1^\prime|\neq|S_2^\prime|$, since otherwise, setting $\mathcal{Q}=\emptyset$ satisfies the claim. Without loss of generality, suppose $|S_1^\prime|>|S_2^\prime|$. Let $F^\prime=V(D)\setminus(F\cup\bigcup_{i=1}^3 S_i^\prime)$. We denote

$\bullet$ $S_{3, 1}^\prime$ as the set of vertices of $S_3^\prime$ such that $d^-_{S_1}(v), d^+_{S_2}(v)>\frac{(1-2\varepsilon^{1/3})n}{2}$, and

$\bullet$ $S_{3, 2}^\prime$ as the set of vertices of $S_3^\prime$ such that $d^+_{S_1}(v), d^-_{S_2}(v)>\frac{(1-2\varepsilon^{1/3})n}{2}$.

\noindent Let $M_1$ (respectively, $M_2$ and $M_3$) be the maximum number of disjoint arcs in $S_1^\prime$ (respectively, from $S_{3, 1}^\prime$ to $S_1^\prime$ and from $S_1^\prime$ to $S_{3, 2}^\prime$), such that $M_1$, $M_2$ and $M_3$ are all disjoint. Let $M=\bigcup_{i=1}^3M_i$. Clearly, by $(\mathcal{B}1)$ and $(\mathcal{B}2)$, every arc in $M_1$ corresponds to a $(S_1^\prime, S_2^\prime)$-path of length $2$ of the form $M_1\rightarrow S_2^\prime$, and every arc in $M_2$ (resp., $M_3$) corresponds to a $(S_1^\prime, S_2^\prime)$-path of length $2$ of the form $S_1^\prime\rightarrow M_2\rightarrow S_2^\prime$ (resp., $M_3\rightarrow S_1^\prime\rightarrow S_2^\prime$). Let $\mathcal{Q}$ be the set of these disjoint $(S_1^\prime, S_2^\prime)$-paths. We choose $M$ to be as large as possible so that $|S_1^\prime\setminus V(\mathcal{Q})|-|S_2^\prime|$ is minimum.

\smallskip

If $|S_1^\prime\setminus V(\mathcal{Q})|-|S_2^\prime|\leq0$, then we are done by taking a subset of $\mathcal{Q}$. Otherwise, assume $|S_1^\prime\setminus V(\mathcal{Q})|-|S_2^\prime|>0$. We now prove that $|S_1^\prime\setminus V(\mathcal{Q})|=|S_2^\prime\setminus V(\mathcal{Q})|$. Suppose, for contradiction, that this is not the case. For a vertex $v$ in $S_1^\prime\setminus V(\mathcal{Q})$, by the degree condition  $d(v)\geq2\delta^0(D)=n+2k$, and noting that $d_{F\cup F^\prime}(v)\leq2|F|+|F^\prime|$ and $d_M(v)\leq|V(M)|$ (otherwise, we could replace an arc with a $3$-path, reducing the cardinality of $S_1^\prime$), we get:
\begin{equation*}
\begin{split}
d_{S_1^\prime}(v)&\geq n+2k-(|V(M)|+|S_{3, 1}^\prime\setminus V(M_1)|+|S_{3, 2}^\prime\setminus V(M_2)|+2|S_2^\prime|+2|F|+|F^\prime|)>0.
\end{split}
\end{equation*}
This implies that we can get another $(S_1^\prime, S_2^\prime)$-path $P$ such that $|S_1^\prime\setminus V(\mathcal{Q})|$ smaller for $\mathcal{Q}:=\mathcal{Q}\cup P$, leading to a contradiction.

\smallskip

Thus, we obtain a set $\mathcal{Q}$ of disjoint minimal $(S_2^\prime, S_1^\prime)$-paths and $|S_1^\prime\setminus V(\mathcal{Q})|=|S_2^\prime\setminus V(\mathcal{Q})|$ and $V(\mathcal{Q})\cap\bigcup_{i=1}^k\{u_i, u_{i}^\prime\}=\emptyset$. This proves Claim \ref{D1}.
\end{proof}
Finally we complete \textbf{Step 3.} Let $S_{3, i}^{\prime\prime}=S_{3, i}^{\prime}\setminus V(\mathcal{Q})$ for each $i\in[2]$ and $S_3^{\prime\prime}=S_{3}^{\prime}\setminus V(\mathcal{Q})$, where the set $\mathcal{Q}$ is obtained by Claim \ref{D1}. Then, by $(\mathcal{B}1)$ and $(\mathcal{B}2)$ we can construct two additional sets of disjoint paths, denoted as $\mathcal{Q}_1$ and $\mathcal{Q}_2$, covering all vertices of $S_{3}^{\prime\prime}$, such that

$\bullet$ $\mathcal{Q}_1$ is a set of disjoint $(S_1^\prime, S_2^\prime)$-paths, satisfying for any path $P_1\in \mathcal{Q}_1$, $$|V(P_1)\cap S_1^\prime|=|V(P_1)\cap S_2^{\prime}|=|V(P_1)\cap S_{3, 1}^{\prime\prime}|,$$

$\bullet$ $\mathcal{Q}_2$ is a set of disjoint $(S_2^\prime, S_1^\prime)$-paths, satisfying for any path $P_2\in\mathcal{Q}_2$, $$|V(P_2)\cap S_1^\prime|=|V(P_2)\cap S_2^\prime|=|V(P_2)\cap S_{3, 2}^{\prime\prime}|.$$ Next, by $(\mathcal{B}1)$, we can obtain $2k$ disjoint minimal paths with vertex pairs of end-vertices $\bigcup_{i=1}^{k}(u_i, w_{i}^\prime)\cup(w_i, u_{i}^\prime)$, where $w_i\in S_1^\prime$ and $w_{i}^\prime\in S_2^\prime$, such that these paths cover all vertices of $V(\mathcal{Q}\cup\mathcal{Q}_1\cup\mathcal{Q}_2)\cup V(\mathcal{P}\cup \mathcal{P}^\prime\cup \mathcal{P}^{\prime\prime})$ and the exceptional vertices of $S_1^\prime\cup S_2^\prime$.

\smallskip

Let $S_i^{\prime\prime}$ be the remaining vertices of $S_i^\prime$ with $i\in[2]$. Then $|S_i^\prime|\geq(1/2-\varepsilon^{1/6}/4)n$, and each vertex $u\in S_i^\prime$ has strong semi-degree of at least $(1-\varepsilon^{1/6})|S_{3-i}^\prime|$ in $S_{3-i}^\prime$. Finally, in the remaining balanced $\varepsilon$-almost complete bipartite subdigraph $(S_1^{\prime\prime}, S_2^{\prime\prime})$, by Proposition \ref{prop} we can construct all paths from $w_{i}^\prime$ to $w_i$ (for $i\in[k]$) with the required lengths. This completes the proof of Case \ref{case3.2}.
%
%
%
%
\end{proof}
\begin{Case}\label{subcase}
$\varepsilon_1n<|U_1\cap U_2|<(1/2-\varepsilon_1)n$.
\end{Case}
In this case, we also complete the proof of Lemma \ref{case} in three steps.

\smallskip

\noindent\textbf{Step 1.} Prove that for any vertex pair $(v_i, v_i^\prime)$ with $i\in[k]$ and any set $W_j^\prime$ with $j\in[4]$, there exists a $(v_i, W_j^\prime)$-path and a $(W_j^\prime, v_i^\prime)$-path, each of length at most $4$ (Claim \ref{asd}).

\smallskip

\noindent\textbf{Step 2.} Prove that there exist disjoint $W_1^\prime$-paths and $W_3^\prime$-paths to cover all vertices that have the small semi-degrees in $D[W_1^\prime]$ and $D[W_3^\prime]$, and there exist disjoint $(W_2^\prime, W_4^\prime)$-paths containing all vertices of $W_j^\prime$ for $j\in\{2, 4\}$ that have small strong semi-degrees in $W_{j+2}^\prime$, where $W_6^\prime=W_2^\prime$ (see Claim \ref{qwe}).

\smallskip

\noindent\textbf{Step 3.} In the remaining $\varepsilon$-almost complete subdigraphs and the balanced $\varepsilon$-almost complete bipartite subdigraph, we use Proposition \ref{prop} to obtain disjoint paths of the desired length with the specified initial and terminal.
\begin{proof}
We now begin the proof of this case. In particular, if $|W_2|\leq \varepsilon^{1/3} n$ or $|W_4|\leq \varepsilon^{1/3} n$, then the case reduces to Case \ref{case3.1} with $\varepsilon^{1/3}$ playing the role of $\varepsilon$. Symmetrically, if $|W_1|\leq \varepsilon^{1/3} n$ or $|W_3|\leq \varepsilon^{1/3} n$, then the case reduces to Case \ref{case3.2} by replacing $\varepsilon$ with $\varepsilon^{1/3}$. Hence in the following, for each $i\in[4]$, we assume that $|W_i|>\varepsilon^{1/3} n$, and set $W_i^\prime=W_i\setminus F$. It is easy to verify that for each $i\in\{1, 3\}$,
$$e(W_i^\prime)\geq|W_i^\prime|^2-3\varepsilon n^2,$$ which implies $$|E_2\cap W_i^\prime|\leq|E_1\cap W_i^\prime|\leq\sqrt{10\varepsilon}|W_i^\prime|,$$ where $E_1$ and $E_2$ are defined in Definition \ref{definition3.2} with $(U_1, U_2)_{\ref{definition3.2}}=(W_1^\prime, W_3^\prime)$.

\smallskip

Further, if there exists a vertex $x\in(E_2\cap W_1^\prime) \cup (V(D)\setminus(W_1\cup W_3\cup F))$ (resp., a vertex $y\in(E_2\cap W_3^\prime)\cup (V(D)\setminus(W_1\cup W_3\cup F))$) such that for each $\sigma\in\{+, -\}$, $d_{W_3^\prime}^\sigma(x)>\varepsilon^{1/3}|W_3^\prime|$ (resp., $d_{W_1^\prime}^\sigma(y)>\varepsilon^{1/3}|W_1^\prime|)$, then we move $x$ (resp., $y$) into the vertex set $W_3^\prime$ (resp., $W_1^\prime$) and update the vertex sets $W_1^\prime$ and $W_3^\prime$. We repeat this operation until no such vertices $x$ and $y$ exist. Next let $S_{1, 1}=W_1^\prime\setminus E_2$, $S_{1, 2}=W_3^\prime\setminus E_2$ and $S_{1, 3}=(W_1^\prime\cup W_3^\prime)\setminus (S_{1, 1}\cup S_{1, 2})$. Using the lower bound of $\delta^0(D)$ and the definitions $S_{1, i}$ with $i\in[3]$, we observe:
\begin{itemize}\item[$(\mathcal{C}1)$] \emph{for each $i\in[2]$,
there exists a subset $S_{1, i}^\prime\subseteq S_{1, i}$ with $|S_{1, i}^\prime|\leq 10\sqrt{\varepsilon}|S_{1, i}|$ such that\\
$\bullet$ for every vertex $v\in S_{1, i}\setminus S_{1, i}^\prime$, $\delta^0_{S_{1, i}}(v)\geq (1-10\sqrt{\varepsilon})|S_{1, i}|$, and\\
$\bullet$ for every vertex $v\in S_{1, i}^\prime$, $\delta^0_{S_{1, i}}(v)\geq\frac{\varepsilon^{1/3}|S_{1, i}|}{2}$.}
\end{itemize}
\noindent
\begin{itemize}\item[$(\mathcal{C}2)$] \emph{For every $v\in S_{1, 3}$, either $d^-_{S_{1, 1}}(v), d^+_{S_{1, 2}}(v)>(1-2\varepsilon^{1/3})|W_1^\prime|$ or $d^+_{S_{1, 1}}(v), d^-_{S_{1, 2}}(v)>(1-2\varepsilon^{1/3})|W_1^\prime|$. Also, $\delta^0_{S_{1, i}}(v)\leq\frac{\varepsilon^{1/3}|W_1^\prime|}{2}$ for each $i\in[2]$.}
\end{itemize}

Symmetrically, since $D[W_2^\prime\cup W_4^\prime]$ is an $\varepsilon$-almost complete bipartite digraph, we have that
$$e^+(W_2^\prime, W_4^\prime), e^+(W_4^\prime, W_2^\prime)\geq|W_2^\prime|\cdot|W_4^\prime|-2\varepsilon n^2,$$ which implies that $$|E_{4}\cap W_i^\prime|\leq|E_{3}\cap W_i^\prime|\leq\sqrt{10\varepsilon}|W_i^\prime|\ \mbox{for every}\ i\in\{2, 4\},$$ where $E_3$ and $E_4$ are defined in Definition \ref{definition3.2} with $(U_1, U_2)_{\ref{definition3.2}}=(W_2^\prime, W_4^\prime)$.

\smallskip

We first address the exceptional vertices of Type I$_4$ in $W_2^\prime\cup W_4^\prime$ using the following operation. If there exists a vertex $x\in(E_{4}\cap W_2^\prime)\cup (V(D)\setminus\cup_{i=1}^3 S_{1, i})$ (resp., a vertex $y\in(E_{4}\cap W_4^\prime)\cup (V(D)\setminus\cup_{i=1}^3 S_{1, i}$) such that $s_{W_4^\prime}(x)>\varepsilon^{1/3}|W_4^\prime|$ (resp., $s_{W_2^\prime}(y)>\varepsilon^{1/3}|W_2^\prime|$), then we move $x$ (resp., $y$) into the vertex set $W_2^\prime$ (resp., $W_1^\prime$) and update the sets $W_1^\prime$ and $W_2^\prime$. We repeat this operation until no such vertices $x$ and $y$ exist. Then let $S_{2, 1}=W_2^\prime\setminus E_4$ and $S_{2, 2}=W_4^\prime\setminus E_4$, and let $S_{2, 3}=(W_2^\prime\cup W_4^\prime)\setminus(S_{2, 1}\cup S_{2, 2})$ be the set of remaining vertices of $W_2^\prime\cup W_4^\prime$. Together with $\delta^0(D)\geq n/2+k$ and the definitions $S_{2, i}$ with $i\in[3]$, we have the following properties:
\begin{itemize}\item[$(\mathcal{C}3)$] \emph{for every $i\in[2]$, apart from at most $10\sqrt{10\varepsilon}|S_{2, i}|$ exceptional vertices, all vertices in $S_{2, i}$ have strongly semi-degrees of at least $(1-10\sqrt{\varepsilon})|S_{2, 3-i}|$ in $S_{2, 3-i}$, and the semi-degrees of these exceptional vertices are at least $\frac{\varepsilon^{1/3}|S_{2, 3-i}|}{8}$ in $S_{2, 3-i}$, and }
\item[$(\mathcal{C}4)$]
\emph{for every vertex $v\in S_{2, 3}$, either $d^+_{S_{2, 1}}(v),\ d^-_{S_{2, 2}}(v)>(1-2\varepsilon^{1/3})|W_2^\prime|$ or $d^-_{S_{2, 1}}(v),\  d^+_{S_{2, 2}}(v)$ $>(1-2\varepsilon^{1/3})|W_2^\prime|$. Also, $\delta^0_{S_{2, i}}(v)\leq 2\varepsilon^{1/3}|S_{2, i}|$ for each $i\in[2]$.}
\end{itemize}
Clearly, the vertex set satisfies: $V(D)\setminus F=S_{1, 1}\cup S_{1, 2}\cup S_{1, 3}\cup S_{2, 1}\cup S_{2, 2}\cup S_{2, 3}$. We also affirm the following assertion.
\begin{claim}\label{asd}
For any vertex pair $(v_i, v_{i}^\prime)$ with $i\in[k]$, there exists a path of length at most $4$ from $v_i$ to every subset $W_j^\prime$ \emph{(}$j\in[4]$\emph{)}. Similarly, there exists a path of length at most $4$ from each $W_j^\prime$ to $v_{i}^\prime$.
\end{claim}
\begin{proof}
By the pigeonhole principle, there is an integer $i_0$ with $i_0\in[4]$ such that $d^+_{W_{i_0}^\prime}(v_i)\geq\frac{|W_{i_0}^\prime|}{2}\geq n/8-4\sqrt{10\varepsilon}n$. Without loss of generality, assume $i_0=1$. By property $(A)$ in Definition \ref{ec} and Lemma \ref{claim1},
\begin{equation*}
\begin{split}
e^+(N^+_{W_1^\prime}(v_i), W_2^\prime)\geq|N^+_{W_1^\prime}(v_i)|\cdot|W_2^\prime|-\frac{\varepsilon^\prime n^2}{2}\geq\left(\frac{n}{8}-4\sqrt{10\varepsilon}n\right)\cdot|W_2^\prime|
-\frac{\varepsilon^\prime n^2}{2}.
\end{split}
\end{equation*}
The above inequality implies that at least half of $W_2^\prime$ (denoted as $X$) satisfies
$$d^-_{N^+_{W_1^\prime}(v_i)}(x)\geq n/16-4\sqrt{10\varepsilon}n,\ \mbox{for any}\ x\in X,$$ with $|X|\geq \frac{|W_2^\prime|}{2}\geq\frac{\varepsilon^{1/3}n}{2}$. Applying property $(A)$ again, $$e^+(X, W_3^\prime)\geq|X|\cdot|W_3^\prime|-\frac{\varepsilon^\prime n^2}{2}\geq\frac{\varepsilon^{2/3}n^2}{3}.$$ Thus, at least half of $W_3^\prime$ (denoted as $Y$) satisfies $$d^-_{U_2}(y)\geq \frac{|X|}{2}\geq\frac{\varepsilon^{1/3}n}{4},\ \mbox{for any}\ y\in Y.$$ Similarly, it satisfies that $e^+(Y, W_4^\prime)\geq|Y|\cdot|W_4^\prime|-\frac{\varepsilon^\prime n^2}{2}\geq\frac{\varepsilon^{2/3}n^2}{5}$, implying that at least $\frac{|W_4^\prime|}{2}\geq \frac{\varepsilon^{1/3}n}{2}$ vertices $w\in W_4^\prime$ satisfy $d^-_{Y}(w)\geq\frac{|Y|}{2}\geq\frac{\varepsilon^{1/3}n}{8}$.

\smallskip

Hence, Combining these results, we construct paths:

$\bullet$ $v_i\rightarrow W_1^\prime$: arc (length $1$).

$\bullet$ $v_i\rightarrow W_2^\prime$: $v_i\rightarrow N^+_{W_1^\prime}(v_i)\rightarrow X$ (length $2$).

$\bullet$ $v_i\rightarrow W_3^\prime$: $v_i\rightarrow N^+_{W_1^\prime}(v_i)\rightarrow X\rightarrow Y$ (length $3$).

$\bullet$ $v_i\rightarrow W_4^\prime$: $v_i\rightarrow N^+_{W_1^\prime}(v_i)\rightarrow X\rightarrow Y\rightarrow W_4^\prime$ (length $4$).

By analogous reasoning on the in-neighborhoods of $v_{i}^\prime$ (replacing out-neighbors with in-neighbors), paths from $W_i^\prime$ to $v_{i}^\prime$ exist with lengths matching the forward directions. This completes the proof.
\end{proof}
\emph{Remark}. Note that even after excluding the use of $\varepsilon^{1/2}n$ vertices, Claim \ref{asd} still holds. This implies that for any $i\in[k]$ and $j\in[4]$, there exists a $(v_i, W_j^\prime)$-path and a $(W_j^\prime, v_i^\prime)$-path of length at most $4$. We can therefore construct a set $\mathcal{P}^\prime$ of $2k$ disjoint paths (each of length $\leq4$) with the following properties for every vertex pair $(v_i, v_i^\prime)$ ($i\in[k]$):

(i) if $|W_1^\prime|, |W_3^\prime|\geq(1/2+\varepsilon)n$, then $\mathcal{P}^\prime$ contains a $(v_i, W_j^\prime)$-path and a $(W_j^\prime, v_i^\prime)$-path, for some $j\in\{1, 3\}$;

(ii) if $|W_2^\prime|, |W_4^\prime|\geq(1/2+\varepsilon)n$, then $\mathcal{P}^\prime$ contains a $(v_i, W_j^\prime)$-path and a $(W_{j+2}^\prime, v_i^\prime)$-path, for some $j\in\{2, 4\}$, where $W_6^\prime$ is identified with $W_2^\prime$ (i.e., indices cycle modulo $4$);

(iii) otherwise, $\mathcal{P}^\prime$ contains either:

$\bullet$  a $(v_i, W_j^\prime)$-path and a $(W_j^\prime, v_i^\prime)$-path for some $j\in\{1, 3\}$, or

$\bullet$ a $(v_i, W_j^\prime)$-path and a $(W_{j+2}^\prime, v_i^\prime)$-path for some $j\in\{2, 4\}$.

\noindent Crucially, the total vertex count satisfies $|V(\mathcal{P}^\prime)|\leq10k$.

\smallskip

For $i, j\in[2]$, define $S_{i, j}^\prime=S_{i, j}\setminus V(\mathcal{P}^\prime)$. We now establish the following key properties.
\begin{claim}\label{qwe}
We prove the following conclusions:

\textbf{\emph{(}E1\emph{)}} There exists a set $\mathcal{P}$ of disjoint $S_{1, 1}^\prime$-paths and $S_{1, 2}^\prime$-paths such that every vertex $u\in S_{1, 3}$ lies on a path in $\mathcal{P}$.

\textbf{\emph{(}E2\emph{)}} There exists a set $\mathcal{Q}$ of disjoint $(S_{2, 1}^\prime, S_{2, 2}^\prime)$-paths such that $|S_{2, 1}^\prime\setminus V(\mathcal{P}\cup \mathcal{Q})|=|S_{2, 2}^\prime\setminus V(\mathcal{P}\cup \mathcal{Q})|$.
\end{claim}
\begin{proof}
We first give the proof of (E1). For each $u\in S_{1, 3}$, by the property $(\mathcal{C}2)$, if $d^-_{S_{1, 1}^\prime}(u)>(1-3\varepsilon^{1/3})|W_1^\prime|$ and $d^+_{S_{1, 2}^\prime}(u)>(1-3\varepsilon^{1/3})|W_1^\prime|$, then Lemma \ref{claim1} and property $(A)$ of \textbf{EC1} imply that $$e^+(N^+_{S_{1, 2}^\prime}(u), S_{2, 2}^\prime)\geq |N^+_{S_{1, 2}^\prime}(u)|\cdot|S_{2, 2}^\prime|-\frac{\varepsilon^\prime n^2}{2}.$$ This yields at least $\frac{1}{2}|N^+_{S_{1, 2}^\prime}(v)|\geq\frac{1}{2}(1-4\varepsilon^{1/3})|S_{1, 2}^\prime|$ vertices $w\in S_{1, 2}^\prime$ with $$d^+_{S_{2, 2}^\prime}(w)\geq\frac{|S_{2, 2}^\prime|}{2}\geq\frac{\varepsilon^{1/3}n}{10}.$$ Reapplying property $(A)$ to $e^+(N^+_{S_{2, 2}^\prime}(w), S_{1, 1}^\prime)$, we obtain that $$e^+(N^+_{S_{2, 2}^\prime}(w), S_{1, 1}^\prime)\geq|N^+_{S_{2, 2}^\prime}(w)|\cdot|S_{1, 1}^\prime|-\frac{\varepsilon^\prime n^2}{2}.$$ Thus, there are at least $\frac{\varepsilon^{1/3} n}{20}$ disjoint paths of length $3$ and the form $u\rightarrow S_{1, 2}^\prime\rightarrow S_{2, 2}^\prime\rightarrow S_{1, 1}^\prime$. Since $d^-_{S_{1, 1}^\prime}(u)>(1-2\varepsilon^{1/3})|S_{1, 1}^\prime|$, there are at least $\varepsilon^{1/3} n$ arcs from $S_{1, 1}^\prime$ to $u$. Combining these arcs with the paths above, we construct at least $\frac{\varepsilon^{1/3} n}{20}$ disjoint $S_{1, 1}^\prime$-paths of length at most $4$ through $u$.

\smallskip

Similarly, if $d^+_{S_{1, 1}^\prime}(u)>(1-3\varepsilon^{1/3})|W_1^\prime|$ and $d^-_{S_{1, 2}^\prime}(u)>(1-3\varepsilon^{1/3})|W_1^\prime|$, analogous reasoning gives at least $\frac{\varepsilon^{1/3} n}{20}$ disjoint $S_{1, 2}^\prime$-paths of length at most $4$ through $u$: We can also prove that there exist at least $\frac{\varepsilon^{1/3} n}{20}$ disjoint paths of length $3$ and the form $u\rightarrow S_{1, 1}^\prime\rightarrow S_{2, 1}^\prime\rightarrow S_{1, 2}^\prime$, and since $d^-_{S_{1, 2}^\prime}(u)>(1-2\varepsilon^{1/3})|W_1^\prime|$, there are at least $\varepsilon^{1/3} n$ arcs from $S_{1, 2}^\prime$ to $u$.

\smallskip

Since $|S_{1, 3}|\leq \frac{\sqrt{10\varepsilon}n}{2}<\frac{\varepsilon^{1/3} n}{20}$, each $u\in S_{1, 3}$ can be assigned an $S_{1, 1}^\prime$-path or an $S_{1, 2}^\prime$-path, and all these paths are disjoint for distinct vertices $u$. Let $\mathcal{P}$ be the union of these paths. The total vertices in $\mathcal{P}$ satisfy $|V(\mathcal{P})|\leq 5\varepsilon^{1/2}n$, proving (E1).

\smallskip

We secondly give the proof of (E2). Define $S_{i, j}^{\prime\prime}=S_{i, j}^\prime\setminus V(\mathcal{P}\cup\mathcal{P}^\prime)$ for $i, j\in[2]$, where $\mathcal{P}^\prime$ is a set of pre-defined paths. By properties $(\mathcal{C}1)$-$(\mathcal{C}4)$, we have that $|S_{2, i}^{\prime\prime}|\geq\frac{\varepsilon^{1/3}n}{4}$. Assume $|S_{2, 1}^{\prime\prime}|\neq |S_{2, 2}^{\prime\prime}|$; otherwise, set $\mathcal{Q}=\emptyset$. Without loss of generality, assume $|S_{2, 1}^{\prime\prime}|>|S_{2, 2}^{\prime\prime}|$, and let $r=|S_{2, 1}^\prime|-|S_{2, 2}^\prime|$. Let $\mathcal{R}$ be a set of disjoint $S_{1, 1}^\prime$-paths, $S_{1, 2}^\prime$-paths and $(S_{2, 1}^\prime, S_{2, 2}^\prime)$-paths, with the property that for disjoint paths $P$ in $\mathcal{R}$, there is exactly one path of $\mathcal{P}$ that is a subgraph of $P$. Further we chose the set $\mathcal{R}$ such that the imbalance $r>0$ is minimized. Under this minimality condition, for at least $r$ vertices $u\in S_{2, 1}^{\prime\prime}$, we can assume that $$d^+_{S_{1, 1}^{\prime\prime}}(u)=0\ \mbox{and}\ d^-_{S_{1, 2}^{\prime\prime}}(u)=0.$$ Otherwise, if $d^+_{S_{1, 1}^{\prime\prime}}(u)>0$ or $d^-_{S_{1, 2}^{\prime\prime}}(u)>0,$ then there would exist an $S_{1, 1}^{\prime\prime}$-path  through $u$, or an $S_{1, 2}^{\prime\prime}$-path $P_2$ through $u$, respectively. Adding $P_i$ (for some $i\in[2]$) to $\mathcal{R}$ would reduce the imbalance $r$ contradicting the minimality of $r$.

\smallskip

Hence it follows from the lower of $\delta^0(D)$ that for each $u\in S_{2, 1}^{\prime\prime}$,
\begin{equation*}
\begin{split}
d_{S_{2, 2}^{\prime\prime}}(u)
\geq2\left(\frac{n}{2}+k\right)-|V(\mathcal{R})|-|S_{1, 1}^\prime\cup S_{1, 2}^\prime|-2|S_{2, 2}^{\prime}|\geq r,
\end{split}
\end{equation*}
where $d_{\mathcal{R}}(u)\leq |V(\mathcal{P})|+1$. Otherwise, $u$ could be inserted into some path in $\mathcal{R}$ to decrease $r$. Thus, $D[S_{2, 1}^{\prime\prime}]$ contains $r$ disjoint arcs, denoted as $\mathcal{M}$. By $(\mathcal{C}4)$, for any vertex $v\in S_{2, 3}$, either $d^+_{S_{2, 1}^{\prime\prime}}(v), d^-_{S_{2, 2}^{\prime\prime}}(v)>(1-3\varepsilon^{1/3})|S_{2, 1}^{\prime\prime}|$ or $d^-_{S_{2, 1}^{\prime\prime}}(v), d^+_{S_{2, 2}^{\prime\prime}}(v)>(1-3\varepsilon^{1/3})|S_{2, 1}^\prime|$. Combined with $(\mathcal{C}3)$, we can construct a set $\mathcal{Q}$
 of disjoint $(S_{2, 1}^\prime, S_{2, 2}^\prime)$-paths, covering $V(\mathcal{M})\cup S_{2, 3}$ such that $|S_{2, 1}^\prime\setminus V(\mathcal{Q})|=|S_{2, 2}^\prime\setminus V(\mathcal{Q})|$. This completes the proof of (E2).
\end{proof}

Let $\mathcal{P}^\prime$ be the set of disjoint paths consisting of $(v_i, W_j^\prime)$-paths and $(W_s^\prime, v_i^\prime)$-paths for every vertex pair $(v_i, v_i^\prime)$ with $i\in[k]$ and some $j, s\in[4]$, where $|V(\mathcal{P}^\prime)|\leq10k$. Define $S_{i, j}^{\prime\prime}=S_{i, j}^\prime\setminus V(\mathcal{P}\cup\mathcal{Q})$, where $\mathcal{P}$ and $\mathcal{Q}$ are the path sets obtained in Claim \ref{qwe}. Then by leveraging the properties $\mathcal{C}1$-$\mathcal{C}4$, we construct a set $\mathcal{P}^{\prime\prime}$ of $2k$ disjoint paths with end-vertex pairs $\bigcup_{i=1}^k(v_i, u_i)\cup (u_{i}^\prime, v_{i}^\prime)$ such that

\noindent$\bullet$ the residual sets satisfy $|S_{2, 1}^{\prime\prime}\setminus V(\mathcal{P}\cup \mathcal{P}^\prime\cup \mathcal{P}^{\prime\prime}\cup\mathcal{Q})|=|S_{2, 2}^{\prime\prime}\setminus V(\mathcal{P}\cup \mathcal{P}^\prime\cup \mathcal{P}^{\prime\prime}\cup\mathcal{Q})|$, and

\noindent$\bullet$ $\mathcal{P}^{\prime\prime}$ covers all vertices in $V(\mathcal{P}\cup \mathcal{P}^\prime\cup\mathcal{Q})\setminus\{u_i, u_i^\prime\}$ and all exceptional vertices of Type I$_1$ and Type I$_3$ in $S_{i, j}$ for any $i, j\in[2]$.

\smallskip

Let $S_{i, j}^0$ denote the set of remaining vertices in $S_{i, j}^{\prime\prime}$ for each $i, j\in[2]$. According to Claim \ref{qwe}, the remaining subdigraphs  $D[S_{1, j}^{\prime\prime}]$ ($j\in[2]$) are $\varepsilon$-almost complete, and $(S_{2, 1}^0, S_{2, 2}^0)$ is a balanced $\varepsilon$-almost complete bipartite digraph. Finally, by applying Proposition \ref{prop}, we can construct all required disjoint paths with specified lengths in the subdigraphs $D[S_{1, 1}^{\prime\prime}]$, $D[S_{1, 2}^{\prime\prime}]$ and $(S_{2, 1}^0, S_{2, 2}^0)$, where the end-vertices of these paths correspond to $\{u_i, u_i^\prime\}$ for all $i\in[k]$. This proves Case \ref{subcase}.
\end{proof}
Combining the results of Cases \ref{case3.1}, \ref{case3.2} and \ref{subcase}, we conclude that Lemma \ref{case} holds.
\end{proof}
\section{Concluding remarks}
The techniques developed in this work can be naturally extended to prove the following result. A detailed proof is omitted here but can be reconstructed through analogous arguments.
\begin{theorem}
Let $H$ be a digraph with $k$ arcs and $\delta(H)\geq1$. For any integers $n_1, \ldots, n_{k-2}$, there exist integers $n_{k-1}, n_k$ and constants $\alpha, \beta\in(0, 1)$ such that if $\max\{n_1, \ldots, n_k\}\leq n/2$ and $\sum_{n_i<\alpha n}n_i\leq\beta n$, then the following holds. There exists a constant $C_0$ such that if $D$ is a digraph of order $n\geq C_0k$ and $\delta^0(D)\geq n/2+k-1$, then $D$ is Hamiltonian $H$-linked, where the lengths of the subdivided paths are $n_1, \ldots, n_{k}$, respectively.
\end{theorem}

In this paper, we investigated the $H$-linkage problem in digraphs under a minimum semi-degree condition. A natural extension of this work is to consider the same problem in the context of robust outexpanders. Specifically, we propose the following open problem:
\begin{problem}
Let $H$ be any oriented digraph with $k$ arcs and $\delta(H)\geq1$. Does there exist a positive integer $C_0$ such that for any positive constants $\nu, \tau, \xi$ satisfying that $0<1/C_0\ll \nu\leq\tau\ll\xi<1$, the following holds? If $D$ is a digraph on $n\geq C_0k$ vertices and $\delta^0(D)\geq\xi n$ and $D$ is a robust $(\nu, \tau)$-outexpander, then for any injective map $f: V(H)\rightarrow
V(D)$ and any integer set $\mathcal{N}=\{n_1, \ldots, n_k\}$ satisfying that $n_i\geq C_0$ for each $i\in[k]$, there is a map $g: A(H)\rightarrow \mathcal{P}(D)$ such that for every arc $a_i=uv$, $g(a_i)$ is a directed path from $f(u)$ to $f(v)$ of length $n_i$, and different arcs are mapped into internally vertex-disjoint directed paths in $D$, and $\bigcup_{i\in[k]}V(g(a_i))=V(D)$.
\end{problem}

\end{document}